\newtheorem{Theorem}{Theorem}[section]
\newtheorem*{TheoremNoNumber}{Theorem}
\newtheorem{Lemma}[Theorem]{Lemma}
\newtheorem{Corollary}[Theorem]{Corollary}
\newtheorem{Proposition}[Theorem]{Proposition}
\newtheorem{Definition}[Theorem]{Definition}
\newtheorem*{DefinitionNoNumber}{Definition}
\newtheorem{Remark}[Theorem]{Remark}
\def \dim{{\mbox {dim}}\,}
\def\V{\mbox{Var}}
\def\Z{{\mathbb Z}}
\def\R\re
\def\V{\bf V}
\def \re{{\mathbb R}}
\def \C{{\mathbb C}}
\def \V{{\bf V}}
\def \H{\mathcal H}
\newcommand{\abs}[1]{\lvert #1 \rvert}
\newcommand{\norm}[1]{\lVert #1 \rVert}
\newcommand{\mDp}{\mathcal{D}'}
\newcommand{\dbar}{\overline{\partial}}
\begin{document}
\title[Spectral rigidity and invariant distributions on Anosov surfaces]{Spectral rigidity and invariant distributions on Anosov surfaces}

\author[G.P. Paternain]{Gabriel P. Paternain}
\address{ Department of Pure Mathematics and Mathematical Statistics,
University of Cambridge,
Cambridge CB3 0WB, UK}
\email {g.p.paternain@dpmms.cam.ac.uk}

\author[M. Salo]{Mikko Salo}
\address{Department of Mathematics and Statistics, University of Jyv\"askyl\"a}
\email{mikko.j.salo@jyu.fi}

\author[G. Uhlmann]{Gunther Uhlmann}
\address{Department of Mathematics, University of Washington}

\email{gunther@math.washington.edu}




\begin{abstract}
This article considers inverse problems on closed Riemannian surfaces whose geodesic flow is Anosov. We prove spectral rigidity for any Anosov surface and injectivity of the geodesic ray transform on solenoidal 2-tensors. We also establish surjectivity results for the adjoint of the geodesic ray transform on solenoidal tensors. The surjectivity results are of independent interest and imply the existence of many geometric invariant distributions on the unit sphere bundle. In particular, we show that on any Anosov surface $(M,g)$, given a smooth function $f$ on $M$ there is a distribution in the Sobolev space $H^{-1}(SM)$ that is invariant under the geodesic flow and whose projection to $M$ is the given function $f$.

\end{abstract}

\maketitle

\section{Introduction} \label{sec_intro}

Let $(M,g)$ be a closed oriented Riemannian manifold with geodesic flow $\phi_t$ acting on the unit sphere bundle $SM$.
Recall that the geodesic flow is said to be {\it Anosov} if there
is a continuous invariant splitting
$TSM=E^0\oplus E^{u}\oplus E^{s}$, where $E^0$ is the flow direction, and 
there are constants $C>0$ and $0<\rho<1<\eta$ such that 
for all $t>0$ 
\[\|d\phi_{-t}|_{E^{u}}\|\leq C\,\eta^{-t}\;\;\;\;\mbox{\rm
and}\;\;\;\|d\phi_{t}|_{E^{s}}\|\leq C\,\rho^{t}.\]
We will say that $(M,g)$ is Anosov, if its geodesic flow is Anosov.
It is very well known that the geodesic flow of a closed
negatively curved Riemannian manifold is a contact Anosov
flow \cite{KH}. The Anosov property automatically implies that the manifold is
free of conjugate points \cite{K,A,Man} and absence of conjugate points simply means that
between two points in the universal covering of $M$ there is a unique geodesic
connecting them.

There is a purely Riemannian way of characterizing this uniform hyperbolicity
of the geodesic flow which is relevant for us \cite{Ru}:   $(M,g)$ is Anosov if and only if
the metric $g$ lies in the $C^2$-interior of the set of metrics without conjugate points.
One reason for mentioning this characterization is to motivate the present results in terms of an interesting analogy between Anosov manifolds (that have no boundary) and compact {\it simple} manifolds with boundary.
Recall that a compact oriented Riemannian manifold $(M,g)$ is said to be simple if its boundary
is strictly convex and any two points are joined by a unique geodesic depending smoothly on the end points.
The notion of simple manifold appears naturally in the context of the boundary rigidity problem \cite{Mi}
and it has been at the center of recent activity on geometric inverse problems. As in the Anosov case, simple
manifolds are free of conjugate points (this follows directly from the definition) and are $C^2$-stable under perturbations.

\subsection{Ray transforms and spectral rigidity}

Inverse problems frequently lead to the study of geodesic ray transforms. These transforms could be acting on functions, or more generally
on tensors depending on the problem at hand. We consider here
the geodesic ray transform acting on symmetric tensor fields on $M$. Given a symmetric (covariant) $m$-tensor field $f = f_{i_1 \cdots i_m} \,dx^{i_1} \otimes \cdots \otimes \,dx^{i_m}$ on $M$, we define the corresponding function on $SM$ by 
$$
f(x,v) = f_{i_1 \cdots i_m} v^{i_1} \cdots v^{i_m}.
$$

Let us consider first the case of simple manifolds with boundary. Geodesics going from $\partial M$ into $M$ are parametrized by $\partial_+ (SM) = \{(x,v) \in SM \,;\, x \in \partial M, \langle v,\nu \rangle \leq 0 \}$ where $\nu$ is the outer unit normal vector to $\partial M$. For $(x,v) \in SM$ we let $t \mapsto \gamma(t,x,v)$ be the geodesic starting from $x$ in direction $v$. The ray transform of $f$ is defined by 
$$
I_mf(x,v) = \int_0^{\tau(x,v)} f(\phi_t(x,v)) \,dt, \quad (x,v) \in \partial_+(SM),
$$
where $\tau(x,v)$ is the exit time of $\gamma(t,x,v)$.
If $h$ is a symmetric $(m-1)$-tensor field, its inner derivative $dh$ is a symmetric $m$-tensor field defined by $dh=\sigma\nabla h$, where $\sigma$ denotes
symmetrization and $\nabla$ is the Levi-Civita connection. It is easy to see that
$$
dh(x,v)= Xh(x,v),
$$
where $X$ is the geodesic vector field associated with $\phi_t$.
If additionally $h|_{\partial M} = 0$, then clearly $I_m(dh) = 0$. The transform $I_m$ is said to be \emph{$s$-injective} if these are the only elements in the kernel. The terminology arises from the fact that
any tensor field $f$ may be written uniquely as $f=f^s+dh$, where $f^s$
is a symmetric $m$-tensor with zero divergence and $h$ is an $(m-1)$-tensor
with $h|_{\partial M} = 0$ (cf. \cite{Sh}). The tensor fields $f^s$ and
$dh$ are called respectively the {\it solenoidal} and {\it potential} parts
of $f$. Saying that $I_m$ is $s$-injective is saying precisely that
$I_m$ is injective on the set of solenoidal tensors.

In \cite{PSU_tensor} we proved that when $(M,g)$ is a simple surface, then $I_m$ is $s$-injective. Here we would like to investigate
the analogous tensor tomography problem when $(M,g)$ is a closed Anosov surface.  The analogy proceeds as follows.
Let $\mathcal{G}$ be the set of closed geodesics on $(M,g)$, parametrized by arc length. The ray transform of a symmetric $m$-tensor field $f$ on $M$ is defined by 
$$
I_{m}f(\gamma) = \int_0^T f(\gamma(t),\dot{\gamma}(t)) \,dt, \quad \gamma \in \mathcal{G} \text{ has period } T.
$$
As before $I_{m}(dh)(\gamma) = 0$ for all $\gamma \in \mathcal{G}$ if $h$ is a symmetric $(m-1)$-tensor. The question of $s$-injectivity is whether these are the only tensors
in the kernel of $I_m$.

Our first main result is:

\begin{Theorem} Let $(M,g)$ be a closed oriented Anosov surface. Then $I_{2}$ is $s$-injective.
\label{thm:I2}
\end{Theorem}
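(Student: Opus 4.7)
The strategy is to reduce the $s$-injectivity of $I_{2}$ to an assertion about the vertical Fourier expansion of a solution of a cohomological equation on $SM$: first solve $Xu=-f$, then show that the solution has low Fourier degree. Throughout, I regard a symmetric $m$-tensor on $M$ as a function on $SM$ whose vertical Fourier expansion is supported in the degrees $\{-m,-m+2,\ldots,m-2,m\}$.

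Assume that $f$ is a smooth solenoidal symmetric $2$-tensor with $I_{2}f=0$. Because the Anosov geodesic flow is transitive with a dense set of periodic orbits and $f$ has vanishing period over each of them, the Liv\v{s}ic theorem produces a H\"older continuous $u\colon SM\to\re$ satisfying $Xu=-f$. Since the weak and strong (un)stable foliations of an Anosov surface are $C^{1,\alpha}$, the smooth Liv\v{s}ic theorem (de la Llave--Marco--Moriyon) then upgrades $u$ to $u\in C^{\infty}(SM)$.

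Next I decompose $u=\sum_{k\in\Z}u_{k}$ in vertical Fourier modes and split the geodesic generator as $X=\eta_{+}+\eta_{-}$, where $\eta_{\pm}$ shift the vertical Fourier degree by $\pm 1$. Because $X$ flips parity and $f$ has even Fourier support $\{-2,0,2\}$, the function $u$ has odd Fourier support; matching Fourier modes in $Xu=-f$ yields the homogeneous recursion
\[
\eta_{+}u_{k-1}+\eta_{-}u_{k+1}=0\qquad\text{for all }|k|\geq 3,
\]
together with three inhomogeneous equations at $|k|\leq 2$ involving $f_{-2},f_{0},f_{2}$. The theorem reduces to showing $u_{k}=0$ for all $|k|\geq 3$: granted this, $u=u_{-1}+u_{1}$ corresponds to a smooth real $1$-form $h$ on $M$, the equation $Xu=-f$ becomes $dh=-f$, and since $f$ is simultaneously potential and solenoidal, one concludes $f=0$.

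The hard step is precisely this vanishing of the higher Fourier modes. On negatively curved surfaces it is the Croke--Sharafutdinov argument, whose input is the closed-surface Pestov identity $\|VXu\|^{2}=\|XVu\|^{2}+\|Xu\|^{2}-\int_{SM}K\,(Vu)^{2}$, made coercive by $K<0$. For a general Anosov surface no sign of curvature is available, so one must exploit the Anosov structure directly. I expect the key tool to be a Pestov-type identity augmented by a commutator formula for the fiberwise Hilbert transform $H$ on $SM$ (of the form $[H,X]u=\X u_{0}+(Xu)_{0}$), producing on the subspace of Fourier modes of degree $\geq 3$ a positive elliptic operator whose triviality of kernel is equivalent to absence of conjugate points, and hence is guaranteed by the Anosov hypothesis. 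This is essentially the content of the surjectivity results for $I_{2}^{*}$ on solenoidal tensors advertised in the abstract; applying the resulting energy estimate to the smooth Liv\v{s}ic solution $u$ forces $u_{k}=0$ for all $|k|\geq 3$, and the reduction above then concludes the proof.
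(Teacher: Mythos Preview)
Your reduction is correct: Livsic gives a smooth $u$ with $Xu=-f$, and by the parity argument you may take $u$ odd (up to a constant), so the task is exactly to show $u_k=0$ for all odd $|k|\ge 3$. The gap is in the ``hard step''. The Pestov-type energy estimate you gesture at --- in the paper's language, the subelliptic bound $\|u\|_{H^1}\le C\|Qu\|$ on $\bigoplus_{|k|\ge 2}\Omega_k$ from Corollary~\ref{cor:subellipticestimate} --- is only obtained when the surface is $\alpha$-controlled with $\alpha>1/3$, equivalently $\beta_{Ter}>3/2$. This is strictly stronger than Anosov, which only guarantees $\beta_{Ter}>1$; indeed Section~\ref{sec_examples} constructs Anosov surfaces with $\beta_{Ter}<2$ arbitrarily close to $3/2$. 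So for a general Anosov surface no such estimate is available, and the surjectivity of $I_2^*$ you invoke (Theorem~\ref{thm_main_imstar}) is \emph{not} established in that generality. The Hilbert-transform commutator from the simple-surface argument does not transfer either; the paper says explicitly, just after Theorem~\ref{thm_main_i0star}, that the authors ``have been unable to put to use their distributional version in the Anosov case''.

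The paper's actual route is quite different and uses none of the tools you name. It relies on surjectivity of $I_1^*$ (Theorem~\ref{theorem:sur1'}), which \emph{does} hold for every Anosov surface since Anosov implies $\alpha$-controlled for some $\alpha>0$. For each holomorphic $1$-form $a\in\Omega_1$ this yields a holomorphic invariant distribution with first Fourier mode $a$; a product construction (Theorem~\ref{theorem:product}) then gives an invariant distribution in $H^{-5}$ with second Fourier mode $ab$ for any pair $a,b$ of holomorphic $1$-forms. When $M$ is non-hyperelliptic, Max Noether's theorem says such products span all holomorphic quadratic differentials, and pairing these invariant distributions against the equation $X(u-v_1)=f_{-2}+F+q_2$ kills the $\mbox{Ker}\,\eta_-$-component $q_2$ of $f_2$; injectivity of $I_0$ then finishes. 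The hyperelliptic case is handled by lifting to a non-hyperelliptic finite cover. Products of distributions, Max Noether, and covers are the heart of the proof, and none of them appear in your outline.
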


A basic inverse problem in spectral geometry, inspired by the famous question ``Can you hear the shape of a drum?'' of M.~Kac \cite{Kac}, is to determine properties of a compact Riemannian manifold $(M,g)$ from the spectrum $\text{Spec}(-\Delta_g)$ of the Laplace-Beltrami operator (with Dirichlet boundary condition if the manifold has nonempty boundary). Two Riemannian manifolds are said to be \emph{isospectral} if their spectra and also the multiplicities of eigenvalues coincide. There is a large literature on isospectral manifolds with both positive results and counterexamples: we refer to the survey \cite{DaHe} for positive results and \cite{Go, GoPeSc} for negative ones.

In particular, for manifolds with no boundary, there are examples of isospectral but non-isometric manifolds even having constant negative sectional curvature \cite{Sun, Vig}. On the other hand, one has \emph{local audibility} for metrics of constant negative sectional curvature \cite{Sh_audibility}, meaning that any such metric $g$ has a $C^{\infty}$ neighborhood where $g$ is uniquely spectrally determined. For metrics of variable negative curvature, local audibility is an open question even in two dimensions. However, \emph{spectral rigidity} is known: any isospectral smooth family $(g_s)$ where $s \in (-\varepsilon,\varepsilon)$ and $g_0$ has negative curvature must satisfy $g_s = g_0$ up to isometry \cite{CS, GK}. There are also compactness results stating that the set of metrics isospectral to a negative curvature metric $g$ is precompact in the $C^{\infty}$ topology up to isometry \cite{BrPePe, OsPhSa}.

By the work of Guillemin and Kazhdan \cite{GK}, we obtain the following spectral rigidity result as a consequence of Theorem \ref{thm:I2}.

\begin{Theorem} \label{thm_spectralrigidity_main}
Let $(M,g)$ be a closed oriented Anosov surface. If $(g_s)$ is a smooth family of Riemannian metrics on $M$ for $s \in (-\varepsilon,\varepsilon)$ such that $g_0 = g$ and the spectra of $-\Delta_{g_s}$ coincide up to multiplicity,
$$
\text{Spec}(-\Delta_{g_s}) = \text{Spec}(-\Delta_{g_0}), \quad s \in (-\varepsilon,\varepsilon),
$$
then there exists a family of diffeomorphisms $\psi_s: M \to M$ with $\psi_0 = \text{Id}$ and 
$$
g_s = \psi_s^* g_0.
$$
\end{Theorem}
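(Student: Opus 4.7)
The plan is to follow the reduction of Guillemin and Kazhdan \cite{GK}: spectral rigidity is deduced from the $s$-injectivity of $I_2$ just established in Theorem~\ref{thm:I2}. The key intermediate step is to show that $\dot g_s := \partial_s g_s$ annihilates the $g_s$-ray transform on every closed geodesic. Since the Anosov property is $C^2$-open (as recalled in the introduction), each $g_s$ is Anosov for $s$ in a neighborhood of $0$, its closed orbits are hyperbolic, hence non-degenerate, and the primitive closed geodesics organize into smooth families $s \mapsto \gamma_s$. The Duistermaat--Guillemin trace formula identifies the singular support of the wave trace $t \mapsto \mathrm{tr}\,\cos(t\sqrt{-\Delta_{g_s}})$ with the length spectrum of $g_s$, with leading singularities pinning down the individual lengths. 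The hypothesis $\mathrm{Spec}(-\Delta_{g_s}) = \mathrm{Spec}(-\Delta_{g_0})$ makes the wave trace independent of $s$, which, combined with the smooth tracking of orbits, yields $\ell_{g_s}(\gamma_s) = \ell_{g_0}(\gamma_0)$ for every such family.

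Parametrizing $\gamma_s$ by $g_s$-arc length, set $T_s := \ell_{g_s}(\gamma_s)$. The first variation formula for length, combined with the fact that $\gamma_s$ is a critical point of $L_{g_s}$ among closed curves, gives
$$0 = \partial_s \ell_{g_s}(\gamma_s) = \tfrac{1}{2}\int_0^{T_s} \dot g_s(\dot\gamma_s,\dot\gamma_s)\,dt = \tfrac{1}{2}\, I_{2,g_s}(\dot g_s)(\gamma_s).$$
Applying Theorem~\ref{thm:I2} to the Anosov surface $(M,g_s)$, the vanishing of $I_{2,g_s}(\dot g_s)$ on every closed geodesic forces $\dot g_s = \mathcal{L}_{V_s} g_s$ for a smooth vector field $V_s$; continuity of the solenoidal decomposition (an elliptic problem with smooth coefficients) ensures $s \mapsto V_s$ is smooth. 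Letting $\psi_s$ be the flow of the time-dependent vector field $-V_s$ with $\psi_0 = \id$, a standard computation gives
$$\partial_s(\psi_s^* g_s) = \psi_s^*\bigl(\mathcal{L}_{-V_s}g_s + \dot g_s\bigr) = 0,$$
so $\psi_s^* g_s = g_0$ for all $s$ in a neighborhood of $0$, which is the conclusion.

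The principal obstacle lies in the first step: extracting the constancy of the individual lengths $\ell_{g_s}(\gamma_s)$ from the mere equality of Laplace spectra. This is where the Duistermaat--Guillemin singularity analysis, the non-degeneracy of closed orbits guaranteed by the Anosov hypothesis, and the smoothness of the family $(g_s)$ are all invoked simultaneously. Once this vanishing on periodic orbits is in hand, Theorem~\ref{thm:I2} does the heavy lifting and the construction of the isometry family $\psi_s$ by flowing along $-V_s$ is mechanical.
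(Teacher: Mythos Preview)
Your proposal is correct and follows exactly the approach the paper indicates: the paper does not give a standalone proof of this theorem but simply states that it is a consequence of Theorem~\ref{thm:I2} via the work of Guillemin and Kazhdan \cite{GK}, and you have spelled out that reduction (wave trace $\Rightarrow$ constancy of lengths along orbit families $\Rightarrow$ $I_{2,g_s}(\dot g_s)=0$ $\Rightarrow$ $\dot g_s$ potential $\Rightarrow$ integrate by flow). The only cosmetic point is that your argument as written yields the conclusion on a neighborhood of $0$, whereas the statement covers all of $(-\varepsilon,\varepsilon)$; this is handled by the usual open--closed argument, since once $g_s=\psi_s^*g_0$ the metric $g_s$ is again Anosov.
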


The work of Guillemin and Kazhdan implicitly uses that along an isospectral family $g_s$ (with $g_0$ Anosov),  the marked length spectrum remains unchanged. Recall that the marked length spectrum is the function which associates to each free homotopy class, the length of the unique closed geodesic representing the class. J.-P. Otal \cite{Otal} and C. Croke \cite{Croke} have independently shown (with different methods) that two negatively curved surfaces with the same marked length spectrum must be isometric. It is reasonable to expect that a similar result should hold for the larger open set of Anosov metrics, but both proofs seem to use in a rather substantial way the sign of the curvature and at the time of writing we do not know how to address this non-linear problem. Also, we mention that the work  of Otal involves geodesic currents that are somewhat related to the invariant distributions used in this paper.

To state the results on $s$-injectivity of $I_{m}$ for $m\geq 3$, we first give a definition involving conjugate points for a modified Jacobi equation. Here $K$ is the Gaussian curvature.

\begin{DefinitionNoNumber} Let $(M,g)$ be a closed oriented Riemannian surface. We say that $(M,g)$ is free of $\beta$-conjugate points if for any geodesic $\gamma(t)$, all nontrivial solutions of the equation $\ddot{y} + \beta K(\gamma(t)) y = 0$ with $y(0) = 0$ only vanish at $t = 0$. The \emph{terminator value} of $(M,g)$ is defined to be 
$$
\beta_{Ter} = \sup \,\{ \beta \in [0,\infty]:\, \text{$(M,g)$ is free of $\beta$-conjugate points} \}.
$$
\end{DefinitionNoNumber}

Clearly $1$-conjugate points correspond to conjugate points in the usual sense. For a closed oriented surface $(M,g)$, we will show in Section \ref{sec_betaconjugate} that 
\begin{itemize}
\item if $(M,g)$ is free of $\beta_0$-conjugate points for some $\beta_0 > 0$, then $(M,g)$ is free of $\beta$-conjugate points for $\beta \in [0,\beta_0]$, 
\item 
$(M,g)$ is Anosov if and only if $\beta_{Ter} > 1$ and there is no geodesic trapped in the region of zero Gaussian curvature (see Corollary \ref{corollary:ganosov} below; this seems to be a new geometric characterization of the Anosov property generalizing \cite[Corollary 3.6]{Ebe});
\item if $(M,g)$ has no focal points (see definition below), then $\beta_{Ter} \geq 2$;
\item 
$(M,g)$ has nonpositive curvature if and only if $\beta_{Ter} = \infty$.
\end{itemize}

\begin{Theorem} \label{thm_sinjectivity_main}
Let $(M,g)$ be a closed oriented surface such that no geodesic is trapped
in the region of zero Gaussian curvature. Suppose in addition that $\beta_{Ter}\geq (m+1)/2$, where
$m$ is an integer $\geq 2$. Then $I_m$ is $s$-injective.
\end{Theorem}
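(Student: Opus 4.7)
The plan is a three-step strategy combining the smooth Livsic theorem, Guillemin--Kazhdan Fourier analysis on the unit sphere bundle, and a twisted Pestov energy identity.

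\textbf{Step 1 (Smooth primitive).} The hypothesis $\beta_{Ter}\geq (m+1)/2\geq 3/2>1$, combined with the no-trapping assumption, gives via the characterization in Section \ref{sec_betaconjugate} that $(M,g)$ is Anosov. Given a solenoidal symmetric $m$-tensor $f$ with $I_m f=0$, we view $f$ as a smooth zero-average function on $SM$ whose integral along every periodic orbit of the geodesic flow vanishes. The smooth Livsic theorem for transitive Anosov flows then produces $u\in C^\infty(SM)$ such that $Xu=-f$.

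\textbf{Step 2 (Fourier ladder).} Decompose $L^2(SM)=\bigoplus_{k\in\Z}\Omega_k$ into eigenspaces of $-iV$, where $V$ is the vertical vector field, and split the geodesic vector field as $X=\eta_++\eta_-$ with raising and lowering operators $\eta_\pm:\Omega_k\to\Omega_{k\pm 1}$. Writing $u=\sum u_k$ and $f=\sum f_k$, the equation $Xu=-f$ separates into the ladder relations
$$\eta_+ u_{k-1}+\eta_- u_{k+1}=-f_k,\qquad k\in\Z.$$
Since $f$ arises from a symmetric $m$-tensor on a surface, $f_k=0$ for $|k|>m$. The goal is to show $u_k=0$ for every $|k|\geq m$: once this holds, $u$ corresponds to a symmetric $(m-1)$-tensor $h$ with $f=-dh$, which combined with the solenoidal condition on $f$ forces $f=0$ by the uniqueness of the potential/solenoidal splitting.

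\textbf{Step 3 (Pestov estimate and vanishing of high modes).} The heart of the argument is a twisted Pestov identity that, for a twist depending on $m$, yields for each mode $u_k$ with $|k|\geq m$ an inequality of the shape
$$\|\eta_+ u_k\|^2\;\geq\;\|\eta_- u_k\|^2,$$
valid whenever $(M,g)$ is free of $\beta$-conjugate points at the critical value $\beta=(m+1)/2$. The no-trapping-in-$\{K=0\}$ hypothesis is then used to upgrade this to a strict inequality whenever $u_k\not\equiv 0$. Feeding this one-sided bound into the ladder relation from Step 2 (where $f_k=0$ for $|k|>m$) gives $\|\eta_+ u_{k+1}\|\leq \|\eta_+ u_{k-1}\|$, producing a monotone tail in the Fourier norms that is incompatible with $\sum_k\|u_k\|^2<\infty$ unless $u_k=0$ for all $|k|\geq m$. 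A separate, already one-step argument handles the boundary case $|k|=m$ using the explicit equation for $f_{\pm m}$.

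\textbf{Main obstacle.} The delicate point is the construction and analysis of the twisted Pestov identity at the sharp threshold $\beta=(m+1)/2$. The curvature term must be balanced precisely against the algebraic weight $k$ coming from the Fourier mode, so that the relevant quadratic form is coercive exactly when the modified Jacobi equation $\ddot y+\beta K y=0$ admits no conjugate points. Getting the identity to close at the critical value of $\beta$, with the zero-curvature trapping hypothesis upgrading a merely non-negative inequality to a strict one on the degenerate locus, is the technical heart of the proof and the reason the threshold $(m+1)/2$ appears naturally.
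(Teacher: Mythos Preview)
Steps 1 and 2 are fine and coincide with the paper's setup: Corollary \ref{corollary:ganosov} gives the Anosov property, the smooth Livsic theorem produces $u\in C^\infty(SM)$ with $Xu=f$, and the problem reduces to killing $u_k$ for $|k|\geq m$.

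The gap is in Step 3. The mode-by-mode inequality you claim,
\[
\|\eta_+ u_k\|^2 \;\geq\; \|\eta_- u_k\|^2 \qquad (|k|\geq m),
\]
is \emph{equivalent} to $(K u_k,u_k)\leq 0$, because the commutator $[\eta_+,\eta_-]=\tfrac{i}{2}KV$ yields $\|\eta_+ u_k\|^2-\|\eta_- u_k\|^2=-\tfrac{k}{2}(K u_k,u_k)$. No twist of the Pestov identity can change this algebraic fact. Hence your inequality holds for all test functions $u_k$ precisely when $K\leq 0$, i.e.\ when $\beta_{Ter}=\infty$, not when $\beta_{Ter}\geq (m+1)/2$. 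On any Anosov surface with a region of positive curvature (such as the examples in Section \ref{sec_examples}) one can choose $u_k$ supported there and get $\|\eta_+ u_k\|^2<\|\eta_- u_k\|^2$. So the Guillemin--Kazhdan ladder monotonicity is simply unavailable at this level of generality. (A smaller issue: even granting your inequality, the ladder gives $\|\eta_+ u_{k+1}\|\geq\|\eta_- u_{k+1}\|=\|\eta_+ u_{k-1}\|$, so the tail is non\emph{decreasing}; it is this, together with $\|\eta_+ u_k\|\to 0$, that forces vanishing---a decreasing tail would not contradict anything.)

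What the hypothesis $\beta_{Ter}\geq (m+1)/2$ actually provides, via the Riccati comparison in Remark \ref{remark:linkbeta}, is that the surface is $\alpha$-controlled with $\alpha=(m-1)/(m+1)$: for every $\psi$,
\[
\|X\psi\|^2-(K\psi,\psi)\;\geq\;\alpha\,\|X\psi\|^2.
\]
This is strictly weaker than $(K\psi,\psi)\leq 0$ and gives nothing useful mode-by-mode. The paper exploits it by applying the Pestov identity \emph{once} to the entire tail $u=a-\sum_{|k|\leq m-1}a_k$, inserting $\psi=Vu$ into the $\alpha$-controlled bound. The resulting inequality (Proposition \ref{prop:alphac}) expresses $\|Qu\|^2$ as a sum of nonnegative pieces; since $Xu$ has degree $m$ one has $Qu=TVXu=0$, and at the critical value $\alpha=(m-1)/(m+1)$ the coefficients are exactly arranged so that all pieces vanish simultaneously. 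One then reads off $\eta_+ u_k=0$ for $k\geq m$ and $\eta_- u_k=0$ for $k\leq -m$, and injectivity of $\eta_\pm$ (Lemma \ref{lemma:conformal}) gives $u_k=0$ for $|k|\geq m$; this is Theorem \ref{theorem:stran}. The moral is that the threshold $(m+1)/2$ enters through a single global estimate on the tail rather than through an infinite cascade of mode-wise comparisons.
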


This theorem was proved earlier for $m=0,1$ \cite{DS} ($\dim M$ arbitrary), for the case $m=2$ if additionally the surface has no focal points \cite{SU}, and for $m \geq 2$ if the surface has negative curvature \cite{GK}. In \cite{CS} the theorem was proved for non-positive curvature and $\dim M$ arbitrary and it is also known that the kernel of $I_m$ is finite dimensional \cite{DS}. 
Certainly, for $m=2$, Theorem \ref{thm_sinjectivity_main} is weaker than Theorem \ref{thm:I2}, nevertheless even this weaker version is new.
In Section \ref{sec_examples} we provide open sets of Anosov surfaces with $3/2 \leq \beta_{Ter} < 2$, thus showing that Theorem \ref{thm_sinjectivity_main} for $m=2$ already improves the main result of \cite{SU}.
One could take the  view point that the more refined argument which is involved in the proof of
Theorem \ref{thm:I2} deals with the harder case when $\beta_{Ter}\in (1,3/2)$.
However, at the moment on Anosov surfaces we need the additional condition $\beta_{Ter} \geq (m+1)/2$ for $m\geq 3$. This condition is closely related to the works \cite{Pestov, Dairbekov_nonconvex} where absence of $\beta$-conjugate points also appears in the case of manifolds with boundary.

\subsection{Invariant distributions and surjectivity of $I_m^*$}

When proving $s$-injectivity of the geodesic ray transform on both simple and Anosov manifolds, a first step is to consider the {\it transport equation} (or {\it cohomological equation}). If $I_{m}(f)=0$ it is possible to show the existence
of a smooth function $u:SM\to\re$ such that
\[Xu=f\]
and $u|_{\partial(SM)}=0$ (for closed manifolds this condition is empty).  In the Anosov case, this is a consequence of one of the celebrated
Livsic theorems \cite{L1,L2} together with the regularity addendum from \cite{dLMM}. For surfaces of negative curvature the existence
of a smooth solution to the transport equation was first proved by Guillemin and Kazhdan in \cite{GK1}, motivated by spectral rigidity for such surfaces \cite{GK}. 

The main result in \cite{PSU_tensor} admits the following extension which exposes the various ingredients needed to solve the tensor
tomography problem for a simple surface. Recall that a surface is said to be non-trapping if every geodesic reaches the boundary in finite time
(perhaps the correct replacement of this notion in the case of closed manifolds is ergodicity of the geodesic flow).
Let $C^{\infty}_{\alpha}(\partial_+(SM))$ denote the set of functions $h\in C^{\infty}(\partial_{+}(SM))$ such that the
unique solution $w$ to $Xw=0$, $w|_{\partial_{+}(SM)}=h$ is smooth.
In natural $L^2$ inner products, the adjoint of $I_0$ is the operator 
$$I_0^*: C^{\infty}_{\alpha}(\partial_+(SM)) \to C^{\infty}(M), \ \ I_0^* h(x) = \int_{S_x} w(x,v) \,dS_x(v).$$
Here $S_x = \{ (x,v) \in TM \,;\, \abs{v} = 1 \}$ and $dS_x$ is the volume form on $S_x$. For more details see \cite{PU}, where it is also proved that the adjoint $I_0^*$ is surjective on any simple manifold.

\begin{TheoremNoNumber}[\cite{PSU_tensor}] Let $(M,g)$ be a compact nontrapping surface with strictly convex smooth boundary. Suppose in addition that $I_0$ and $I_1$ are $s$-injective and that $I_0^*$ is surjective. Then $I_m$ is $s$-injective for $m \geq 2$. 
\end{TheoremNoNumber}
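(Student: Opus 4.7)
The plan is to combine the Guillemin--Kazhdan fiberwise Fourier decomposition on $SM$ with holomorphic integrating factors built from the surjectivity of $I_0^*$, aiming to truncate the Fourier series of a primitive of $f$ and reduce it to a potential tensor.

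Let $f$ be a smooth solenoidal symmetric $m$-tensor with $I_m f = 0$. The nontrapping and strictly convex hypotheses yield a smooth $u \colon SM \to \re$ with $Xu = f$ and $u|_{\partial(SM)} = 0$. Expand $u$ and $f$ in the fiber Fourier series $u = \sum_k u_k$, $f = \sum_k f_k$, and split $X = \eta_+ + \eta_-$ with $\eta_\pm$ shifting the Fourier index by $\pm 1$. Since $f$ is an $m$-tensor, $f_k = 0$ for $|k| > m$. The aim is to show $u_k = 0$ for $|k| \geq m+1$; once this holds, $u$ corresponds to a symmetric $(m-1)$-tensor $h$ with $h|_{\partial M} = 0$ and $dh = f$, and solenoidality of $f$ then forces $f = 0$.

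The truncation of $u$ rests on the following key lemma, which is where the hypotheses enter: for every smooth $a \in C^\infty(M)$ there exists a smooth fiberwise holomorphic first integral $w$ on $SM$, meaning $Xw = 0$ with $w = \sum_{k \geq 0} w_k$ and $w_0 = a$; an antiholomorphic analogue also exists. Such $w$ is built mode by mode by solving $\eta_+ w_{k-1} = -\eta_- w_{k+1}$; the obstruction at each step is a smooth function on $M$ that by the surjectivity of $I_0^*$ lies in the required range, while the $s$-injectivity of $I_0$ and $I_1$ removes the low-order ambiguity and ensures that the Fourier tower assembles into a genuinely smooth function on $SM$.

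With such $w$ in hand, the identity $X(uw) = fw$ together with $u|_{\partial(SM)} = 0$ yields $\int_\gamma fw \, dt = 0$ on every geodesic, hence $\int_{SM} fw = 0$ by Santal\'o's formula. Varying $a = w_0$ over $C^\infty(M)$ and exploiting the recursive determination of the higher $w_k$, one projects the transport equation onto sufficiently negative Fourier modes (using that $fw$ has Fourier support bounded below by $-m$) and converts the resulting identities into conditions of the form $I_0(\text{a smooth function of } u \text{ and } a) = 0$ for every $a$. The $s$-injectivity of $I_0$ then forces $u_k = 0$ for $k \leq -m-1$, and the antiholomorphic analogue kills $u_k$ for $k \geq m+1$, completing the truncation. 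The principal obstacle is the construction of the holomorphic integrating factors and the Fourier bookkeeping that converts the integrated transport equation into clean $I_0$-type identities; beyond this, the argument is a systematic use of the three stated hypotheses together with classical transport-equation regularity.
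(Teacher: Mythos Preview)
The paper does not give its own proof of this statement; it is quoted from \cite{PSU_tensor}. The closest thing here is the proof of the Anosov analogue, Theorem~\ref{thm:withnh} in Section~\ref{proofmain}, which uses a pairing with holomorphic invariant distributions rather than integrating factors.

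Your sketch has the right ingredients (fiberwise Fourier analysis, the splitting $X=\eta_++\eta_-$, transport primitives, holomorphic objects built from surjectivity of $I_0^*$), but the central lemma you state is false as written, and this is where the argument actually lives. You assert that for every $a\in C^\infty(M)$ there is a smooth \emph{holomorphic} first integral $w=\sum_{k\ge 0}w_k$ with $Xw=0$ and $w_0=a$. But if $w_k=0$ for $k<0$ then $(Xw)_{-1}=\eta_-w_0=\eta_-a$, so $Xw=0$ forces $\bar\partial a=0$; for arbitrary $a$ this fails. What surjectivity of $I_0^*$ produces is an invariant $w$ with $w_0=a$ having Fourier modes of \emph{both} signs; its holomorphic projection is no longer a first integral. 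Your description of the construction (``mode-by-mode with obstructions lying in the range of $I_0^*$, ambiguities removed by injectivity of $I_0,I_1$'') does not match any mechanism that actually closes.

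In \cite{PSU_tensor} the holomorphic objects one really constructs are holomorphic solutions of $Xw=A$ for a given $1$-form $A$ (obtained by projecting an invariant function furnished by surjectivity of $I_0^*$); the exponential $e^{w}$ then serves as an integrating factor that converts the top-mode piece of the transport equation into an \emph{attenuated} ray transform on functions and $1$-forms, whose $s$-injectivity follows from that of $I_0$ and $I_1$. Equivalently (and this is what Section~\ref{proofmain} does in the Anosov case), one builds holomorphic invariant $w=\sum_{k\ge m}w_k$ with $w_m=q\in\ker\eta_-$ (not with arbitrary zeroth mode) and pairs it against the transport equation to annihilate the top coefficient. Finally, note a minor slip: to identify $u$ with an $(m-1)$-tensor you need $u_k=0$ for $|k|\ge m$, not $|k|\ge m+1$ (parity considerations can absorb this, but you should say so).
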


We already mentioned that $I_0$ and $I_{1}$ are known to be $s$-injective for an Anosov surface and one of the purposes of the present
paper is to show that  $I_{0}^*$ is surjective. To discuss the adjoint it is convenient to give a brief preliminary discussion.

For the following facts on function spaces we refer to \cite{DeRham, Schwartz}. Denoting by $\delta_{\gamma}$ the measure on $SM$ which corresponds to integrating over the curve $(\gamma(t),\dot{\gamma}(t))$ on $SM$, we have in the distributional pairing 
$$
If(\gamma) = \langle \delta_{\gamma}, f \rangle, \quad \gamma \in \mathcal{G}.
$$
Denote by $\mDp(SM)$ the set of distributions (continuous linear functionals) on $C^{\infty}(SM)$, and equip this space with the weak$^*$ topology. These spaces are reflexive, so the dual of $\mDp(SM)$ is $C^{\infty}(SM)$. The geodesic vector field $X$ acts on $\mDp(SM)$ by duality (since it is a differential operator with smooth coefficients). We consider the set of invariant distributions (a closed subspace of $\mDp(SM)$), 
$$
\mDp_{\text{inv}}(SM) = \{ \mu \in \mDp(SM) \,;\, X\mu = 0 \}.
$$
Thus $\mu \in \mDp(SM)$ is invariant iff $\langle \mu, X\varphi \rangle = 0$ for all $\varphi \in C^{\infty}(SM)$. Now the set $\{ \delta_{\gamma} \,;\, \gamma \in \mathcal{G} \}$ is dense in $\mDp_{\text{inv}}(SM)$, since if $f \in C^{\infty}(SM)$ satisfies $\langle \delta_{\gamma}, f \rangle = 0$ for all $\gamma \in \mathcal{G}$, then by the Livsic theorem $f = Xu$ for some $u \in C^{\infty}(SM)$ and consequently $\langle \mu, f \rangle = 0$ for all $\mu \in \mDp_{\text{inv}}(SM)$.

It follows that we may without loss of generality define $I$ as the map 
$$
I: C^{\infty}(SM) \to L(\mDp_{\text{inv}}(SM),\re), \ \ If(\nu) = \langle \nu, f \rangle \quad \text{for } \nu \in \mDp_{\text{inv}}(SM).
$$
Here $L(E,\re)$ denotes the set of continuous linear maps from a locally convex topological vector space $E$ to $\re$. Equipping this set with the weak$^*$ topology, it follows that $I$ is a continuous linear map from the Frech\'et space $C^{\infty}(SM)$ into the locally convex space $L(\mDp_{\text{inv}}(SM),\re)$.
Since $\mDp_{\text{inv}}(SM)$ is reflexive as a closed subspace of a reflexive space, the dual of $L(\mDp_{\text{inv}}(SM),\re)$ is $\mDp_{\text{inv}}(SM)$. Therefore the adjoint of $I$ is the map 
$$
I^*: \mDp_{\text{inv}}(SM) \to \mDp(SM), \ \ \langle I^* \nu, \varphi \rangle = \langle \nu, I\varphi \rangle \quad \text{for } \varphi \in C^{\infty}(SM).
$$
Restricting the domain of $I$ gives rise for instance to the ray transform on $0$-forms, 
$$
I_0: C^{\infty}(M) \to L(\mDp_{\text{inv}}(SM),\re), \ \ I_0 f(\nu) = \langle \nu, f \circ \pi \rangle
$$
where $\pi: SM \to M$ is the natural projection. The adjoint of this map is 
$$
I_0^*: \mDp_{\text{inv}}(SM) \to \mDp(M), \ \ I_0^* \nu = 2\pi \nu_0
$$
where for any $\mu \in \mDp(SM)$, the average $\mu_0$ is the element in $\mDp(M)$ given by $\langle \mu_0, \psi \rangle = \frac{1}{2\pi} \langle \mu, \psi \circ \pi \rangle$ for $\psi \in C^{\infty}(M)$. 
On an oriented surface (see Section \ref{sec_preliminaries})
any smooth function $u\in C^{\infty}(SM)$ admits a Fourier expansion $u=\sum_{m\in\Z}u_m$ where
\[u_{m}(x,v):=\frac{1}{2\pi}\int_{0}^{2\pi}u(\rho_{t}(x,v))e^{-imt}\,dt\]
and $\rho_{t}$ is the flow of the vertical vector field $V$ determined by the principal circle fibration $\pi:SM\to M$. 
Similarly, distributions admit Fourier expansions as above, and $\mu_0$ is just the zeroth Fourier coefficient.
We can now state our next result, which expresses the surjectivity of $I_0^*$ in terms of the existence of invariant distributions.

\begin{Theorem} \label{thm_main_i0star}
Let $(M,g)$ be an Anosov surface. Given $f \in C^{\infty}(M)$, there exists $w \in H^{-1}(SM)$ with $Xw = 0$ and $w_0 = f$.
Moreover if we write $w=\sum_{k\in\Z}w_k$, then $w_k\in C^{\infty}(SM)$ for all even $k$.
\end{Theorem}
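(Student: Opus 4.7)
The plan is to construct $w$ by a duality argument: I will extend a suitable linear functional on a subspace of $C^\infty(SM)$ via Hahn--Banach, using the $s$-injectivity of $I_0$ on Anosov surfaces (due to Dairbekov--Sharafutdinov via the Pestov identity) together with a Pestov-type continuity estimate. Concretely, I first consider the subspace $E := \pi^\ast C^\infty(M) + X C^\infty(SM) \subset C^\infty(SM)$ and define $L\colon E \to \re$ by
\[
L(\pi^\ast\psi + X\varphi) := 2\pi \int_M f\,\psi\, dV_g.
\]
This is well-defined: if $\pi^\ast\psi + X\varphi = 0$, then for every closed geodesic $\gamma$ of period $T$,
\[
\int_0^T \psi(\gamma(t))\, dt = -\int_0^T X\varphi(\gamma(t),\dot\gamma(t))\, dt = 0,
\]
so $I_0\psi$ vanishes on all closed geodesics, and $s$-injectivity forces $\psi = 0$. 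Any $H^1$-bounded extension of $L$ to $H^1(SM)$ automatically produces $w \in H^{-1}(SM)$ with $\langle w, X\varphi\rangle = 0$ for all $\varphi$ (i.e.\ $Xw = 0$) and $\langle w, \pi^\ast\psi\rangle = 2\pi\int_M f\psi$ (i.e.\ $w_0 = f$).

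The core analytic step is the continuity estimate
\[
|L(h)| \leq C_f\,\|h\|_{H^1(SM)}, \quad h \in E.
\]
Writing $h = \pi^\ast\psi + X\varphi$ and normalizing $\varphi$ to have zero mean on $SM$ (possible because $\ker X$ on $L^2(SM)$ reduces to constants by Anosov ergodicity), one recovers $\psi$ from the zeroth Fourier mode via $\psi = h_0 - 2\,\mathrm{Re}(\eta_-\varphi_1)$, so the task reduces to bounding $\|\eta_-\varphi_1\|_{L^2(M)}$ in terms of $\|h\|_{H^1(SM)}$. Here I would invoke the Pestov identity on $SM$ under the Anosov hypothesis (equivalently, $\beta_{Ter}>1$), refining the Dairbekov--Sharafutdinov derivation of $s$-injectivity to yield an $H^1$-continuity estimate in place of an $L^2$ injectivity bound.

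To establish smoothness of the even Fourier modes, I would exploit the parity decoupling of the equation $Xw=0$: the even components satisfy the recursion
\[
\eta_- w_{2k+2} = -\eta_+ w_{2k}, \quad k \geq 0,
\]
with smooth starting data $w_0 = f$. Each step amounts to solving a first-order elliptic equation for a section of a line bundle over $M$; standard elliptic regularity, combined with the absence of $\beta$-conjugate points for $\beta \leq \beta_{Ter}$, produces smooth inductive solutions, and uniqueness modulo a finite-dimensional kernel allows one to identify these smooth sections with the even Fourier modes of the Hahn--Banach $w$.

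The main obstacle is the continuity estimate. While the Pestov identity, as applied by Dairbekov--Sharafutdinov, cleanly yields $L^2$-coercivity on solutions of $Xu=\pi^\ast f$ and thus $s$-injectivity, upgrading this to the $H^1$-continuity of $L$ on all of $E$ (with an arbitrary $X\varphi$ summand, where $\varphi$ is not controlled by $h$ in any direct Poincar\'e-type bound on Anosov flows) demands careful gauge-fixing of $\varphi$ and a mode-by-mode refinement of the identity, which is precisely where the Anosov hypothesis enters in an essential way beyond the $s$-injectivity arguments.
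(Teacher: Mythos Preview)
Your functional-analytic setup is sound: the functional $L$ is well-defined on $E$ by $s$-injectivity of $I_0$, and any $H^1$-bounded extension yields the desired $w$. The gap is precisely where you locate it---the continuity estimate $|L(h)| \leq C_f\|h\|_{H^1}$ is not proved, only wished for, and your proposed route via a ``mode-by-mode refinement'' of the Pestov identity is too vague to count as an argument.

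The observation you are missing is this: apply $V$ to the decomposition $h = \pi^*\psi + X\varphi$. Since $V$ annihilates $\pi^*\psi$, you obtain $Vh = VX\varphi =: P\varphi$. Your continuity problem therefore reduces to the single subelliptic estimate
\[
\|\varphi\|_{H^1(SM)} \leq C\,\|P\varphi\|_{L^2(SM)}, \qquad \varphi \in C^\infty_\diamond(SM),
\]
from which $\|X\varphi\|_{L^2} \leq C\|Vh\|_{L^2} \leq C\|h\|_{H^1}$ and hence $\|\pi^*\psi\|_{L^2} = \|h-X\varphi\|_{L^2} \leq C\|h\|_{H^1}$ follow immediately. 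This subelliptic estimate is exactly what the paper establishes (Lemma~\ref{lemma_p_inequalities}(b)): the Pestov identity reads $\|Pu\|^2 = \|XVu\|^2 - (KVu,Vu) + \|Xu\|^2$, and the Anosov hypothesis enters through the fact that an Anosov surface is $\alpha$-controlled for some $\alpha > 0$, i.e.\ $\|X\psi\|^2 - (K\psi,\psi) \geq \alpha(\|X\psi\|^2 + \|\psi\|^2)$ (Theorem~\ref{theorem:control}(3), proved using the two distinct Hopf--Riccati solutions $r^+ \neq r^-$). This gives $\|Pu\|^2 \geq \|Xu\|^2 + \alpha(\|XVu\|^2 + \|Vu\|^2)$, and the $H^1$ bound follows with Poincar\'e.

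The paper's own argument is a more direct repackaging of the same ingredients. Rather than extending your $L$, it parametrizes the unknown as $w = Vh + f$ with $h \in L^2(SM)$ (this automatically forces $w_0 = f$), so that $Xw = 0$ becomes $P^*h = XVh = -Xf$. Surjectivity of $P^*:L^2 \to H^{-1}_\diamond$ then follows from the subelliptic estimate above by the standard Hahn--Banach/Riesz argument. This bypasses the decomposition $h = \pi^*\psi + X\varphi$ and any need to recover $\psi$ from $h$.

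Finally, your smoothness argument is overcomplicated. Once $w \in H^{-1}$ with $Xw = 0$ and $w_0 = f \in C^\infty$ is in hand, the equation $\eta_- w_{2} = -\eta_+ w_{0}$ together with ellipticity of $\eta_-$ gives $w_2 \in C^\infty$ directly, and induction on $\eta_- w_{2k+2} = -\eta_+ w_{2k}$ does the rest. No appeal to $\beta$-conjugate points is needed, and there is nothing to ``identify'': you apply elliptic regularity to the Fourier modes of the $w$ you already have, not to separately constructed sections.
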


Note that there are no $L^2$ solutions to $Xw=0$ (not even $L^1$) due to the ergodicity of the geodesic flow \cite{A0,H0}, so $H^{-1}$ is the optimal regularity in the $H^k$ Sobolev scale. This is a crucial difference with the boundary case. Using Theorem \ref{thm_main_i0star} one can show
as in \cite{PSU_tensor} that given any 1-form $A$ on $M$ orthogonal to the space of harmonic 1-forms, there is $w\in H^{-1}(SM)$ which is holomorphic in the velocity variable
(i.e. $w_k=0$ for all $k<0$) for which $Xw=A$. These holomorphic integrating factors are the key to proving $s$-injectivity on simple surfaces in \cite{PSU_tensor}, but unfortunately
we have been unable to put to use their distributional version in the Anosov case.

In \cite[Theorem 4.2]{PU2}  the authors show the surjectivity of $I^*_{1}$ for compact simple manifolds. The version for Anosov surfaces
is as follows.  We say that a $1$-form $A$ is solenoidal if it has zero divergence. 

\begin{Theorem} \label{thm_main_i1star}
Let $(M,g)$ be an Anosov surface and let $A$ be a solenoidal 1-form. Then there exists $w\in H^{-1}(SM)$ such that
$Xw=0$ and $w_{-1}+w_{1}=A$.
Moreover if we write $w=\sum_{k\in\Z}w_k$, then $w_k\in C^{\infty}(SM)$ for all odd $k$.
\end{Theorem}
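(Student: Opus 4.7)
The plan is to mirror the proof of Theorem~\ref{thm_main_i0star}, but with the Fourier parity shifted by one. Since the prescribed data $A$ lives in Fourier degrees $\pm 1$, we expect the odd Fourier modes of $w$ to remain smooth while the even modes carry the $H^{-1}$ singularity. The new essential input, playing the role that $s$-injectivity of $I_1$ played in Theorem~\ref{thm_main_i0star}, will be the $s$-injectivity of $I_2$ from Theorem~\ref{thm:I2}.

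As a first step I would make the ansatz $w = \tilde{A} + u$, where $\tilde{A}(x,v) = A_x(v) = A_1(x,v) + A_{-1}(x,v)$ is the smooth function on $SM$ associated with the 1-form $A$, and $u \in H^{-1}(SM)$ is to be constructed. The conditions $Xw = 0$ and $w_{-1}+w_1 = A$ then translate to $Xu = -X\tilde{A}$ together with $u_{-1}+u_1 = 0$. Writing $X = \eta_+ + \eta_-$ in terms of the raising and lowering operators on Fourier modes, the source $g := -X\tilde{A}$ is smooth with Fourier content supported in degrees $0$ and $\pm 2$. Crucially, the zeroth mode $g_0 = -(\eta_- A_1 + \eta_+ A_{-1})$ vanishes because $A$ is solenoidal (this is the divergence condition in Fourier form). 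Thus $g \in C^\infty(SM)$ has Fourier expansion supported in the even degrees $\pm 2$, and since $X$ swaps parity I would look for $u$ with purely odd Fourier support.

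Next I would construct $u$ by the same functional-analytic mechanism as in Theorem~\ref{thm_main_i0star}: a duality/closed-range argument whose key nontrivial input is the $s$-injectivity of $I_2$ on solenoidal 2-tensors. The point is that the obstruction to solving $Xu = g$ in the required class is dual to the cokernel of the geodesic ray transform on symmetric 2-tensors, and Theorem~\ref{thm:I2} says exactly that this cokernel is trivial modulo potential tensors. Unwinding the recursion $\eta_+ u_{k-1} + \eta_- u_{k+1} = g_k$, this allows one to consistently ``close'' the chain of odd-mode equations. Once $u$ is constructed, Livsic-type regularity \cite{L1,L2,dLMM} upgrades each individual odd-mode component $u_k$ to a smooth function on $SM$ — exactly the statement that $w_k \in C^\infty(SM)$ for all odd $k$.

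The main obstacle is the regularity of $u$: by ergodicity of the Anosov geodesic flow there are no $L^2$ (nor even $L^1$) invariant distributions beyond multiples of the Liouville measure, so $H^{-1}$ is the optimal Sobolev scale in which $u$ can live. Quantifying this, and extracting $C^\infty$ regularity on each individual odd Fourier component while the assembled distribution remains only $H^{-1}$, requires careful control of the resolvent of $X$ specific to the Anosov setting, combined with bookkeeping of the parity structure imposed by the mode-by-mode recursion. Checking that the $u_{\pm 1}$ piece can be prescribed with $u_1 + u_{-1} = 0$ consistently with the rest of the system is the last delicate point, and is exactly where $s$-injectivity of $I_2$ on solenoidal 2-tensors removes the only obstruction.
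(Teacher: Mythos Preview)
Your proposal has a fatal circularity: you invoke Theorem~\ref{thm:I2} ($s$-injectivity of $I_2$) as the ``key nontrivial input,'' but in this paper the logical dependence runs in the opposite direction. Theorem~\ref{thm_main_i1star} (more precisely its consequence Theorem~\ref{theorem:sur1'}) is the main tool used in Section~\ref{proofmain} to \emph{prove} Theorem~\ref{thm:I2}. So you cannot assume $I_2$ is $s$-injective here.

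Even setting circularity aside, the mechanism you sketch is not the right one. The paper does not obtain the solvability of $Xu=g$ by dualizing an injectivity statement for $I_2$; instead it proceeds via a \emph{subelliptic estimate} derived from the Pestov identity. Concretely, one introduces the truncated operator $Q=TVX$ with $T$ the projection onto $\bigoplus_{|k|\ge 2}\Omega_k$, and uses the fact that an Anosov surface is $\alpha$-controlled for some $\alpha>0$ (Theorem~\ref{theorem:control}(3)) to prove $\|u\|_{H^1}\le C\|Qu\|$ for $u\in\bigoplus_{|k|\ge 1}\Omega_k$ (Corollary~\ref{cor:subellipticestimate} with $m=1$). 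This estimate, via Hahn--Banach and Riesz, yields an $L^2$ solution $h$ of $Q^*h=XVTh=-X\tilde A$ (Lemma~\ref{lemma:solution}), and then $w=VTh+\tilde A$ is the desired distribution. Your write-up never mentions the Pestov identity or the $\alpha$-controlled condition, which is where all the analytic content lies.

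Two smaller points: the smoothness of the odd Fourier components does not come from Livsic-type regularity (which concerns global solutions of $Xu=f$), but from the \emph{ellipticity} of $\eta_\pm$ applied inductively to the recursion $\eta_+w_{k-1}+\eta_-w_{k+1}=0$. And your identification of the solenoidal condition with $g_0=-(\eta_-A_1+\eta_+A_{-1})=0$ is correct and is exactly how the paper reduces to Theorem~\ref{theorem:sur1}.
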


As we will see in Section \ref{proofmain}, this result will be the key for proving solenoidal injectivity of $I_{2}$.

Next let us discuss surjectivity of $I^*_{m}$ for $m\geq 2$. The conformal class of the Riemannian metric $g$ determines
a complex structure.
Given a positive integer $m$, let $\mathcal H_m$ denote the space of
holomorphic sections of the
$m$-th power of the canonical line bundle. By the Riemann-Roch theorem 
this space has complex dimension $(2m-1)(\mbox{\rm \tt{g}}-1)$ for $m\geq 2$
and complex dimension $\tt{g}$ for $m=1$, where $\tt{g}$ is the genus of $M$.
(For $m=1$ we get the holomorphic 1-forms and for $m=2$ the holomorphic
quadratic differentials.) Note that the elements in $\mathcal H_m$ can be regarded
as functions on $SM$.\footnote{Sections of the $m$-th power of the canonical
line bundle can be regarded as functions on $SM$ which transform according to the rule $f(x,\rho_{t}(x,v))=e^{imt}f(x,v)$.}

\begin{Theorem} \label{thm_main_imstar}
Let $(M,g)$ be a closed oriented surface having no geodesic trapped in the set of zero curvature.
\begin{enumerate}
\item If $(M,g)$ has no focal points, or more generally if $\beta_{Ter} > 3/2$, then given $q\in \H_{2}$
there exists $w\in H^{-1}(SM)$ such that $Xw=0$ and $w_{2}=q$.
Moreover, $w_{2j}\in C^{\infty}(SM)$ for all $j\geq 1$.
\item If $(M,g)$ has nonpositive curvature, or more generally if $\beta_{Ter}> (m+1)/2$ where $m \geq 2$, then given $q\in \H_{m}$ there exists $w\in H^{-1}(SM)$ such that $Xw=0$ and $w_{m}=q$.
Moreover if $m$ is even, $w_{2j}\in C^{\infty}(SM)$ for all $j\geq m/2$.
Similarly, if $m$ is odd,  $w_{2j+1}\in C^{\infty}(SM)$ for all $j\geq (m-1)/2$.
\end{enumerate}
\end{Theorem}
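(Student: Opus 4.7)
The plan is to construct $w$ as a Fourier series $w = \sum_{k} w_k$ on $SM$, supported on modes of the same parity as $m$, symmetric under conjugation ($w_{-k} = \overline{w_k}$), vanishing for $|k| < m$, and with $w_m = q$. Writing the geodesic vector field as $X = \eta_+ + \eta_-$ (the standard raising and lowering operators in the vertical Fourier decomposition), the invariance equation $Xw = 0$ then breaks up level by level into the hierarchy
\[
\eta_- w_{k+2} = -\eta_+ w_k, \qquad k = m,\, m+2,\, m+4,\, \ldots,
\]
together with its complex conjugate on the negative side. The intermediate balance equations at levels $|k| < m$ hold automatically under this ansatz, and the equation at level $m-1$ reduces to $\eta_- q = 0$, which is precisely the holomorphy hypothesis $q \in \mathcal{H}_m$.

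The core analytic step is a solvability lemma for $\eta_-$: for each $k \geq m$ of the right parity, the right-hand side $-\eta_+ w_k$ should lie in the range of $\eta_-$ acting on the $(k+2)$-th Fourier mode, with a smooth quantitative preimage. I would obtain this by applying the Pestov identity mode by mode, comparing $\|\eta_+ u\|^2$ and $\|\eta_- u\|^2$ for $u$ of fixed Fourier degree modulo a curvature term whose coefficient is linear in the degree. The hypothesis $\beta_{Ter} > (m+1)/2$ is precisely what makes this Pestov comparison strictly positive from level $k = m$ onward, yielding a coercivity estimate on the orthogonal complement of the finite-dimensional kernel $\mathcal{H}_{k+2}$ and hence solvability with an $L^2$ estimate. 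The assumption that no geodesic is trapped in $\{K = 0\}$ is used to promote merely nonnegative Pestov expressions into strictly positive ones, exactly as in the proof of Theorem \ref{thm_sinjectivity_main}. Standard elliptic regularity then upgrades the $L^2$ solution to a smooth one.

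Iterating the solvability lemma from $w_m = q$ produces smooth $w_{m+2},\, w_{m+4},\, \ldots$ of the prescribed parity, yielding at once the claimed smoothness of $w_{2j}$ for $j \geq m/2$ in the even case and of $w_{2j+1}$ for $j \geq (m-1)/2$ in the odd case; the negative modes are smooth by conjugation. To assemble the formal sum into an element of $H^{-1}(SM)$ one must track the $L^2$ norms through the iteration: careful bookkeeping of the Pestov estimates should yield a polynomial bound $\|w_k\|_{L^2} = O(k^N)$. Since the $H^{-1}(SM)$ norm damps a pure vertical Fourier mode by a factor of order $|k|^{-1}$ coming from the vertical Laplacian, any polynomial $L^2$ growth is compatible with convergence in $H^{-1}$. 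The principal obstacle is precisely this quantitative control on the iterative constants under the sharp threshold $\beta_{Ter} > (m+1)/2$: one needs polynomial rather than exponential growth of $\|w_k\|_{L^2}$, which requires extracting uniformity from the Pestov identity near its borderline of positivity. This is most delicate in part (1), where the starting mode $m = 2$ coincides with the threshold and $\beta_{Ter}$ may sit only just above $3/2$.
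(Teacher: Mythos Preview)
Your approach is the ladder-operator construction of Guillemin--Kazhdan: solve $\eta_- w_{k+2} = -\eta_+ w_k$ step by step starting from $w_m = q$, and then sum. The paper explicitly remarks (end of Section~\ref{sec_preliminaries}) that this works in negative curvature but that the needed iterative norm control is \emph{not available} in the general Anosov setting. Your proposal does not close this gap; it merely hopes that ``careful bookkeeping'' yields polynomial growth of $\norm{w_k}_{L^2}$, without indicating how. Moreover, the convergence claim is mistaken: damping a mode by $\langle k\rangle^{-1}$ in the vertical direction gives at best
\[
\norm{w}_{H^{-1}}^2 \;\lesssim\; \sum_k \langle k\rangle^{-2}\,\norm{w_k}_{L^2}^2,
\]
and this diverges whenever $\norm{w_k}_{L^2}$ grows like any positive power of $k$. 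Polynomial growth would at most place $w$ in some $H^{-N}$ with $N$ large, not in $H^{-1}$.

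The paper circumvents both difficulties by a genuinely different, non-iterative route. From $\beta_{Ter} > (m+1)/2$ and Remark~\ref{remark:linkbeta} one gets that $(M,g)$ is $\alpha$-controlled for some $\alpha > (m-1)/(m+1)$; Corollary~\ref{corollary:ganosov} ensures the surface is Anosov. One then applies the \emph{global} Pestov identity (not mode by mode) to the truncated operator $Q = TVX$, where $T$ projects onto $\bigoplus_{|k|\geq m+1}\Omega_k$, obtaining the subelliptic estimate $\norm{u}_{H^1}\leq C\norm{Qu}$ for $u\in\bigoplus_{|k|\geq m}\Omega_k$ (Corollary~\ref{cor:subellipticestimate}). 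A Hahn--Banach/Riesz duality argument (Lemma~\ref{lemma:solution}) then solves $Q^*h = -Xq$ with $h\in L^2(SM)$, and $w := VTh + q$ satisfies $Xw = 0$, $w_m = q$, and lies in $H^{-1}(SM)$ directly because $h\in L^2$. Smoothness of the individual Fourier coefficients follows \emph{a posteriori} from ellipticity of $\eta_\pm$, not from the construction. The point is that the duality argument delivers the $H^{-1}$ bound in one stroke, with no recursion and no accumulating constants.
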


Theorem \ref{theorem:sur3} below gives a form of item (1) in Theorem \ref{thm_main_imstar}  which applies to any non hyperelliptic Anosov surface, but it only gives $H^{-5}$ regularity for the invariant distribution.

Recall that a Riemannian manifold is said to have no focal points if for every unit speed geodesic $\gamma(t)$ and every non-zero Jacobi field $J(t)$ along $\gamma$ with $J(0)=0$, the function $t\mapsto |J(t)|^2$ has positive derivative for $t>0$. Geometrically, this means that the manifold has no conjugate points and geodesic balls in the universal covering are strictly convex. It is easy to check that
a manifold with non-positive sectional curvature has no focal points.

A result of P. Eberlein \cite{Ebe}
asserts that a surface with no focal points is Anosov if and only if every geodesic hits a point of negative Gaussian curvature and using this
it is possible to produce Anosov surfaces of non-positive curvature which have open sets with zero Gaussian curvature \cite{Ebe}.
There are also examples of Anosov surfaces isometrically embedded in $\re^3$ \cite{DP} and Anosov surfaces with focal points \cite{G}.

The existence of distributions as in Theorems \ref{thm_main_i0star}--\ref{thm_main_imstar} was first established by Guillemin and Kazhdan in \cite{GK1} for surfaces
of negative curvature, but as far as we can see their proof does not extend to the Anosov case; moreover the precise regularity of the distributions
was not considered there.  In general, an arbitrary transitive Anosov flow has a plethora of invariant measures and distributions, but the ones in Theorems \ref{thm_main_i0star}--\ref{thm_main_imstar}  are {\it geometric} since they really depend on the geometry of the circle fibration $\pi:SM\to M$.
In the case of surfaces of constant negative curvature these distributions and their regularity are discussed in \cite[Section 2]{AZ}.

Finally our methods also give new results for the transport equation, for example:

\begin{Theorem} \label{thm_main_transport}
Let $(M,g)$ be a closed surface of genus $\geq 2$ without focal points. Let $f$ be a symmetric $m$-tensor  with $m\leq 3$ and assume
that there is a smooth solution $u$ to $Xu=f$. Then $f$ is a potential tensor.
\end{Theorem}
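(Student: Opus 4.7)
My plan is to Fourier-expand $u=\sum_{k\in\Z}u_k$ on $SM$ with respect to the vertical vector field $V$, and show that smoothness of $u$ combined with the no focal points hypothesis forces $u_k=0$ for all $|k|\geq m$; once this is established, $u$ corresponds to a symmetric $(m-1)$-tensor $h$ and $f=Xh=dh$ is potential. Splitting $X=\eta_++\eta_-$ into the Guillemin--Kazhdan raising and lowering operators, the equation $Xu=f$ becomes the system $\eta_+u_{k-1}+\eta_-u_{k+1}=f_k$; since $f$ is an $m$-tensor with $m\leq 3$, $f_k=0$ for $|k|>m$, yielding the recursion $\eta_-u_{k+1}=-\eta_+u_{k-1}$ in that range.

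The key tool is the Pestov identity restricted to each Fourier mode. In the no focal points setting ($\beta_{Ter}\geq 2$), a refined $\beta$-weighted version developed in Section \ref{sec_betaconjugate} and in the spirit of \cite{SU} furnishes the coercivity $\|\eta_-w\|^2\leq\|\eta_+w\|^2$ on Fourier modes of sufficiently high positive degree (and symmetrically for negative modes). Combined with the recursion this gives $\|\eta_+u_{k-1}\|^2\leq\|\eta_+u_{k+1}\|^2$ for $|k|>m$, so $\|\eta_+u_k\|$ is monotone nondecreasing in $k$ above the threshold. Since $u$ is smooth, $\|\eta_+u_k\|\to 0$ as $k\to\infty$, and monotonicity forces $\eta_+u_k=0$ for all $k\geq m$; the recursion then yields $\eta_-u_k=0$ for $k\geq m+2$. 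The genus $\geq 2$ hypothesis implies that the common kernel of $\eta_+$ and $\eta_-$ on the $k$-th Fourier mode is trivial for $k\neq 0$ (by the Riemann--Roch/Hodge dimension counts for holomorphic $k$-differentials on surfaces of genus $\geq 2$, where the relevant line bundles have nonzero degree), so $u_k=0$ for $k\geq m+2$. Propagating back via the recursion at $k=m\pm 1$ and the Pestov identity at the critical degrees then yields $u_k=0$ for all $|k|\geq m$.

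The main obstacle I anticipate is propagating the vanishing down to the boundary Fourier modes $|k|=m$, particularly for $m=3$ where the Pestov identity is borderline at $\beta_{Ter}=2$: at these boundary modes the recursion couples to the nonzero source terms $f_{\pm m}$, and extracting genuine coercivity requires delicate use of the $\beta$-conjugate-point framework of Section \ref{sec_betaconjugate} together with the genus $\geq 2$ constraint (which is essential to rule out the persistent obstructions one encounters on the flat torus, where the absence of focal points is compatible with nontrivial harmonic $1$-forms).
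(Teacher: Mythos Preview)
There is a genuine gap. Your key claimed inequality $\|\eta_- w\|^2\leq\|\eta_+ w\|^2$ for $w\in\Omega_k$ with $k$ large does \emph{not} follow from the absence of focal points. From the bracket $[\eta_+,\eta_-]=\tfrac{i}{2}KV$ and $\eta_+^*=-\eta_-$ one computes, for any $w\in\Omega_k$,
\[
\|\eta_+ w\|^2-\|\eta_- w\|^2=-\tfrac{k}{2}(Kw,w),
\]
so for $k>0$ your inequality is equivalent to $(Kw,w)\leq 0$ for all such $w$, which forces $K\leq 0$ pointwise. Surfaces without focal points may have regions of positive curvature (the Gulliver-type examples in Section~\ref{sec_examples} are of this kind), so the modewise monotonicity $\|\eta_+ u_{k-1}\|\leq\|\eta_+ u_{k+1}\|$ driving your argument is simply unavailable. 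Neither Section~\ref{sec_betaconjugate} nor \cite{SU} provides a substitute: what one actually obtains from $\beta_{Ter}\geq 2$ (equivalently, the $\tfrac12$-controlled bound of Theorem~\ref{theorem:control}) is the global estimate $\|X\psi\|^2-(K\psi,\psi)\geq\tfrac12\|X\psi\|^2$, and when specialised to $\psi\in\Omega_k$ this becomes $(\tfrac12+\tfrac{2}{k})\|\eta_+\psi\|^2\geq(\tfrac{2}{k}-\tfrac12)\|\eta_-\psi\|^2$, which is vacuous for $k\geq 4$.

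The paper's argument is not a mode-by-mode recursion. It applies the Pestov identity once to the full tail $u=a-\sum_{|k|\leq m-1}a_k$, for which $Qu=TVXu=0$. No focal points gives $\alpha=\tfrac12=(m-1)/(m+1)$ for $m=3$ (and larger $\alpha$ than needed for $m\leq 2$), and Proposition~\ref{prop:alphac} at this borderline value of $\alpha$ forces $v=\sum_{|k|\geq m+1}(Xu)_k=0$, $w=\sum_{|k|\geq m+1}(XVu)_k=0$, and $\eta_-u_m=\eta_+u_{-m}=0$. Unwinding, one obtains $\eta_+u_k=0$ for every $k\geq m$ and $\eta_-u_k=0$ for every $k\leq -m$, simultaneously; the injectivity of $\eta_+$ on $\Omega_k$ for $k\geq 1$ (and of $\eta_-$ for $k\leq -1$) from Lemma~\ref{lemma:conformal} then kills all $u_k$ with $|k|\geq m$ in one stroke. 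The essential point is that the $\alpha$-controlled inequality applied with $\psi=Vu$, where $u$ is supported on \emph{all} modes $|k|\geq m$ at once, delivers information that the same inequality applied mode by mode cannot.
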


Note that in this theorem we do not need to assume that $(M,g)$ is Anosov, but if it is, then combining this result with the
Livsic theorem we obtain right away that $I_3$ is $s$-injective. 

This paper is organized as follows. Section \ref{sec_intro} is the introduction, and Section \ref{sec_preliminaries} contains some preliminaries on Fourier analysis on the unit sphere bundle and the basic energy identity, called Pestov identity, that will be used below. In Section \ref{sec_control} we introduce $\alpha$-controlled surfaces motivated by the Pestov identity. Sections \ref{surjectiveadjoint} and \ref{sec_surjective2} contain the proofs of the surjectivity results for $I_m^*$, based on subelliptic estimates for certain (non-local if $m \geq 1$) second order operators on $SM$, and Section \ref{sec_injectivity} gives the corresponding injectivity results. In Section \ref{sec_betaconjugate} we consider $\beta$-conjugate points and hyperbolicity of related cocycles, leading to a sufficient condition for the injectivity and surjectivity results, and Section \ref{sec_examples} is devoted to examples.  Section \ref{proofmain} contains the proof of Theorem \ref{thm:I2} which builds on the surjectivity result for $I_{1}^{*}$.
Finally we mention that there are versions of Theorems \ref{thm_main_i0star} and \ref{thm_main_i1star}  and of the results in Section \ref{sec_betaconjugate} in any dimensions, but we shall consider these elsewhere.

\subsection*{Acknowledgements}

M.S. was supported in part by the Academy of Finland and an ERC starting grant, and G.U. was partly supported by NSF and a Walker Family Endowed Professorship. The authors would like to express their gratitude to the Fields Institute and the organizers of the program on Geometry in Inverse Problems in 2012 where part of this work was carried out. We would also like to thank Pelham Wilson for very useful discussions concerning Max Noether's theorem which is used in the proof of Theorem \ref{thm:I2}.

\section{Preliminaries} \label{sec_preliminaries}
Let $(M,g)$ be a closed oriented surface with unit circle bundle $SM$.
Let $X$ be the geodesic vector field on $SM$, and let $V$ be the vertical vector field. We let $X_{\perp} = [X,V]$. 
There are two additional structure equations given by $X=[V,X_{\perp}]$ and $[X,X_{\perp}]=-KV$, where $K$ is the Gaussian curvature.

There is an orthogonal decomposition of $L^2(SM)$ given by 
$$
L^2(SM) = \bigoplus_{k=-\infty}^{\infty} H_k
$$
where $H_k$ is the eigenspace of $-iV$ corresponding to the eigenvalue $k$. Let also $\Omega_k = H_k \cap C^{\infty}(SM)$. If $f \in L^2(SM)$ we write $f = \sum_{k=-\infty}^{\infty} f_k$ where $f_k \in H_k$. Then $\norm{f}^2 = \sum \norm{f_k}^2$ where 
$$
(u,v) = \int_{SM} u\bar{v} \,d(SM), \quad \norm{u} = (u,u)^{1/2}.
$$
The volume form $d(SM)$ is uniquely determined by the requirement that it takes the value $1$
on the frame $\{X,X_{\perp},V\}$. The volume form is preserved by the three vector fields in the frame.
If $x = (x_1,x_2)$ are isothermal coordinates in $(M,g)$ and if $\theta$ is the angle between a tangent vector and $\partial/\partial x_1$, then $(x,\theta)$ are local coordinates in $SM$. In these coordinates, the elements in the Fourier expansion of $f = f(x,\theta)$ are given by 
$$
f_k(x,\theta) = \left( \frac{1}{2\pi} \int_0^{2\pi} f(x,\theta') e^{-ik\theta'} \,d\theta' \right) e^{ik\theta}.
$$
The $H^1$-norm of a function $u\in C^{\infty}(SM)$ will be defined as:
\[\norm{u}^2_{H^{1}}:=\norm{Xu}^2+\norm{X_{\perp}u}^2+\norm{Vu}^2+\norm{u}^2.\]
There is a canonical Riemannian metric on $SM$, called the Sasaki metric, which is  defined by declaring the frame
$\{X,X_{\perp},V\}$ to be an orthonormal basis. If we consider the gradient $\nabla u$ with respect to the Sasaki metric, then the
$H^1$-norm has the familiar form
\[\norm{u}^2_{H^{1}}=\norm{\nabla u}^2+\norm{u}^2.\]

We will make repeated use of the following fundamental $L^2$-energy identity (or Pestov identity) valid for any $u \in C^{\infty}(SM)$ (see \cite{PSU_tensor} for a short proof):
\begin{equation} \label{energy_identity}
\norm{XVu}^2 - (KVu,Vu)+\norm{Xu}^2 - \norm{VXu}^2 = 0.
\end{equation}

We also make use of the splitting $X = \eta_+ + \eta_-$ where 
$$
\eta_+ = \frac{1}{2}(X+i X_{\perp}), \quad \eta_- = \frac{1}{2}(X-i X_{\perp}).
$$
It is easy to check that these operators have the property:
$\eta_+: \Omega_k \to \Omega_{k+1}$ and $\eta_-: \Omega_k \to \Omega_{k-1}$
for any $k\in\Z$ and $\eta_{+}^*=-\eta_{-}$. The operators $\eta_{\pm}$ are elliptic
\cite{GK} and, as seen in the proof below, they are essentially $\dbar$ and $\partial$ operators.

\begin{Lemma} Assume $(M,g)$ has genus $\tt{g} \geq 2$.
Then $\eta_{+}:\Omega_{k}\to \Omega_{k+1}$ is injective for
$k\geq 1$ and $\eta_{-}:\Omega_{k}\to\Omega_{k-1}$ is injective
for $k\leq -1$. The dimension of $\mbox{\rm Ker}\,\eta_{-}$ is $(2k-1)(\tt{g}-1)$ for $k\geq 2$  and $\tt{g}$ for $k=1$. Moreover, $\eta_{-}$ is surjective for $k\geq 2$ and $\eta_{+}$ is surjective for $k\leq -2$.
\label{lemma:conformal}
\end{Lemma}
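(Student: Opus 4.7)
The strategy is to realize $\Omega_k$ as smooth sections of the $k$-th tensor power of the canonical line bundle $\kappa$ on $M$, to identify $\eta_\pm$ with the Cauchy--Riemann operators on these bundles, and then to apply Riemann--Roch together with Serre duality. I would work in local isothermal coordinates $z=x_1+ix_2$ so that $g=e^{2\lambda}|dz|^2$, in which case an element $u\in \Omega_k$ has the form $u(x,\theta)=h(x)e^{ik\theta}$. Choosing the conformal factor appropriately (as in \cite{GK1}), one identifies $u$ with the local section $h\,e^{-k\lambda}\,dz^k$ of $\kappa^k$, and a direct local computation shows that $\eta_-:\Omega_k\to\Omega_{k-1}$ becomes, up to a non-vanishing factor, the Cauchy--Riemann operator $\dbar_{\kappa^k}:\Gamma(\kappa^k)\to\Gamma(\bar\kappa\otimes\kappa^k)$, composed with the metric isomorphism $\bar\kappa\cong\kappa^{-1}$ that identifies the target with $\Omega_{k-1}\cong\Gamma(\kappa^{k-1})$. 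In particular $\ker\eta_-\cap\Omega_k=H^0(M,\kappa^k)$ is the space of holomorphic $k$-differentials.

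Next I would apply Riemann--Roch. The line bundle $\kappa^k$ has degree $k(2\tt{g}-2)$, so
\[\dim H^0(\kappa^k)-\dim H^1(\kappa^k)=(2k-1)(\tt{g}-1).\]
Serre duality identifies $H^1(\kappa^k)\cong H^0(\kappa^{1-k})^*$. For $k\geq 2$ the bundle $\kappa^{1-k}$ has negative degree, hence $H^0(\kappa^{1-k})=0$, which gives $\dim H^0(\kappa^k)=(2k-1)(\tt{g}-1)$. Moreover the vanishing $H^1(\kappa^k)=0$ says exactly that $\dbar_{\kappa^k}$ is surjective, i.e., $\eta_-:\Omega_k\to\Omega_{k-1}$ is surjective (ellipticity of $\eta_-$ guarantees the range is closed, so one can pass between smooth and Dolbeault cohomology). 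For $k=1$ one has $\dim H^0(\kappa)=\tt{g}$, which yields the remaining dimension statement.

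Finally the claims for $\eta_+$ follow by adjoint and conjugation arguments. Since $\eta_+$ is first-order elliptic with formal $L^2$-adjoint $\eta_+^\ast=-\eta_-$, its range is closed and it is injective precisely when $\eta_-:\Omega_{k+1}\to\Omega_k$ has dense range; the latter holds for $k+1\geq 2$ by the surjectivity just established, giving injectivity of $\eta_+$ on $\Omega_k$ for $k\geq 1$. For the negative-index statements I would use complex conjugation: $u\mapsto\bar u$ is an antilinear isomorphism $\Omega_k\to\Omega_{-k}$ satisfying $\overline{\eta_\pm u}=\eta_\mp\bar u$, which transports surjectivity of $\eta_-$ for $k\geq 2$ to surjectivity of $\eta_+$ for $k\leq -2$, and likewise carries injectivity of $\eta_+$ for $k\geq 1$ to injectivity of $\eta_-$ for $k\leq -1$. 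The main technical obstacle is the precise identification of $\eta_-$ with $\dbar_{\kappa^k}$: one must track signs and conformal factors carefully when passing between the local expressions $h(x)e^{ik\theta}$ and global sections of $\kappa^k$, and verify that the identification is globally well-defined (independent of the choice of isothermal chart). Once this is in place, the lemma reduces to a direct application of classical Riemann surface theory.
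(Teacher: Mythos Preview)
Your proposal is correct and follows essentially the same route as the paper: identify $\Omega_k$ with smooth sections of $\kappa^{\otimes k}$ via isothermal coordinates, show that $\eta_-$ corresponds to the $\dbar$-operator on these bundles, and then invoke Riemann--Roch (the paper simply cites \cite{Du} for the surjectivity of $\dbar$ when $k\geq 2$, which is equivalent to your Serre-duality computation $H^1(\kappa^k)\cong H^0(\kappa^{1-k})^*=0$). One minor slip to fix when you carry out the details: the section of $\kappa^{\otimes k}$ corresponding to $u=he^{ik\theta}$ is $he^{k\lambda}\,dz^k$, not $he^{-k\lambda}\,dz^k$, since a unit tangent vector in these coordinates satisfies $e^{i\theta}=e^{\lambda}\dot z$; this is precisely the sign-and-factor bookkeeping you already flag as the main technical point.
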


\begin{proof}Consider isothermal coordinates $(x,y)$ on $M$ such that the metric
can be written as $ds^2=e^{2\lambda}(dx^2+dy^2)$ where $\lambda$ is a smooth
real-valued function of $(x,y)$. This gives coordinates $(x,y,\theta)$ on $SM$ where
$\theta$ is the angle between a unit vector $v$ and $\partial/\partial x$.
In these coordinates, $V=\partial/\partial\theta$ and
the vector fields $X$ and $X_{\perp}$ are given by:
\[X=e^{-\lambda}\left(\cos\theta\frac{\partial}{\partial x}+
\sin\theta\frac{\partial}{\partial y}+
\left(-\frac{\partial \lambda}{\partial x}\sin\theta+\frac{\partial\lambda}{\partial y}\cos\theta\right)\frac{\partial}{\partial \theta}\right);\]
\[X_{\perp}=-e^{-\lambda}\left(-\sin\theta\frac{\partial}{\partial x}+
\cos\theta\frac{\partial}{\partial y}-
\left(\frac{\partial \lambda}{\partial x}\cos\theta+\frac{\partial \lambda}{\partial y}\sin\theta\right)\frac{\partial}{\partial \theta}\right).\]
Consider $u\in\Omega_k$ and write it as $u(x,y,\theta)=h(x,y)e^{ik\theta}$.
Using these formulae a calculation shows that
\begin{equation}
\eta_{-}(u)=e^{-(1+k)\lambda}\dbar(he^{k\lambda})e^{i(k-1)\theta},
\label{eq:eta}
\end{equation}
where $\dbar=\frac{1}{2}\left(\frac{\partial}{\partial x}+i\frac{\partial}{\partial y}\right)$.
For completeness let us write the formula for $\eta_{+}$:
\[\eta_{+}(u)=e^{(k-1)\lambda}{\partial}(he^{-k\lambda})e^{i(k+1)\theta}.\]

Note that $\Omega_k$ can be identified with the set of smooth sections
of the bundle $\kappa^{\otimes k}$ where $\kappa$ is the canonical line bundle. The identification takes $u=he^{ik\theta}$ into $he^{k\lambda}(dz)^k$ ($k\geq 0$)
and $u=he^{-ik\theta}\in \Omega_{-k}$ into $he^{k\lambda}(d\bar{z})^k$.
Hence from (\ref{eq:eta}) we see that for $k\geq 0$, $u$ is in the kernel of $\eta_{-}$
if and only if the corresponding section of $\kappa^{\otimes k}$ is holomorphic.
Hence the dimension of the kernel of $\eta_{-}$ for $k\geq 0$ only depends on the conformal structure of the surface. 
The argument for $\mbox{\rm Ker}\,\eta_{+}$ for $k\leq 0$ is the same.

We can be a bit more precise about the above.
Let $\Gamma(M,\kappa^{\otimes k})$ denote the space of smooth sections of
the $k$-th tensor power of the canonical line bundle $\kappa$. Locally
its elements have the form $w(z)dz^k$ for $k\geq 0$ and $w(z)d\bar{z}^{-k}$ for $k\leq 0$.
Given a metric $g$ on $M$, there is map
\[\varphi_{g}:\Gamma(M,\kappa^{\otimes k})\to \Omega_k\]
given by restriction to $SM$. This map is a complex linear isomorphism.
Let us check what this map looks like in isothermal coordinates.
An element of $\Gamma(M,\kappa^{\otimes k})$ is locally of the form $w(z)dz^k$ ($k\geq 0$).
Consider a tangent vector $\dot{z}=\dot{x}_{1}+i\dot{x}_{2}$. It has norm one in the metric $g$ iff $e^{i\theta}=e^{\lambda}\dot{z}$. Hence the restriction
of $w(z)dz^k$ to $SM$ is
\[w(z)e^{-k\lambda}e^{ik\theta}\]
as indicated above. Observe that $\varphi_{g}$ is surjective because given $u\in\Omega_k$ ($k\geq 0$) we can write it locally
as $u=he^{ik\theta}$ and the local sections $he^{k\lambda}(dz)^k$ glue together to define an element
in $\Gamma(M,\kappa^{\otimes k})$.

Moreover there is also a restriction map
\[\psi_{g}:\Gamma(M,\kappa^{\otimes k}\otimes \bar{\kappa})\to \Omega_{k-1}\]
which is an isomorphism. The restriction of $w(z)dz^k\otimes d\bar{z}$ to $SM$ is
\[w(z)e^{-(k+1)\lambda}e^{i(k-1)\theta},\]
because $e^{-i\theta}=e^{\lambda}\bar{\dot{z}}$.

Given any holomorphic line bundle $\xi$ over $M$, there is a $\dbar$-operator defined on:
\[\dbar:\Gamma(M,\xi)\to \Gamma(M,\xi\otimes \bar{\kappa}).\]
In particular we can take $\xi=\kappa^{\otimes k}$. Combining this with (\ref{eq:eta}) we derive
the following commutative diagram:

\[\begin{CD}
\Gamma(M,\kappa^{\otimes k}) @>\varphi_{g}>> \Omega_{k}\\
@VV\dbar V   @VV\eta_{-}V\\
\Gamma(M,\kappa^{\otimes k}\otimes \bar{\kappa}) @>\psi_{g}>> \Omega_{k-1}
\end{CD}\]
In other words:
\begin{equation}
\eta_{-}=\psi_{g}\,\dbar\,\varphi_{g}^{-1}.
\label{eq:formeta}
\end{equation}
It is well known that on a Riemann surface of genus $\geq 2$, $\dbar$
is surjective for $k\geq 2$ (see for example \cite{Du}) and the dimension of its kernel can be computed by Riemann-Roch if $k \geq 1$. By (\ref{eq:formeta})
$\eta_{-}$ is surjective for $k\geq 2$ and any metric. The result for $\eta_{+}$ follows
in a similar way (or we could use that $\eta_{+}^*=-\eta_{-}$).
\end{proof}

For example for $k=2$, the elements in $\mbox{\rm Ker}\,\eta_{-}$ are in
1-1 correspondence with holomorphic quadratic differentials.
From the lemma we see that given $u\in\Omega_{k}$ ($k\geq 1$), there is
a unique smooth function $v\in\Omega_{k+1}$ orthogonal to $\mbox{\rm Ker}\,\eta_{-}$ such that $\eta_{-}(v)=u$.

Using this lemma we can define ``ladder'' operators as in \cite{GK1}
as follows. Given $f_{r}\in\Omega_{r}$, $r\geq 0$, define a sequence
of functions $f_{r+2},f_{r+4},\cdots,f_{r+2n}$ by requiring:
\[\eta_{+}(f_{r+2i-2})+\eta_{-}(f_{r+2i})=0\;\;\mbox{\rm for}\;\;1\leq i\leq n.\]
The functions $f_{r+2i}$ are uniquely determined by demanding them
to be orthogonal to the kernel of $\eta_{-}:\Omega_{r+2i}\to\Omega_{r+2i-1}$.
Now define $T_{n}:\Omega_{r}\to\Omega_{r+2n}$ by setting $T_{n}(f_{r})=f_{r+2n}$.
If we assume that the Gaussian curvature of the surface is negative, then
it is possible to show that there is good control on the various Sobolev norms
of $T_{n}$ \cite{GK1}. Using the operators $T_n$, Guillemin and Kazhdan prove the
existence of invariant distributions as in Theorems \ref{thm_main_i0star}--\ref{thm_main_imstar}.
Unfortunately these estimates are not available in
the general Anosov case, so we need to proceed in a different manner. We derive our estimates
from the Pestov identity (\ref{energy_identity}).

\section{$\alpha$-controlled surfaces} \label{sec_control}
The following definition is motivated by the Pestov identity (\ref{energy_identity}) and it will
be technically very useful in what follows.
\begin{Definition} Let $\alpha\in [0,1]$. We say that a closed surface $(M,g)$ is $\alpha$-controlled if
\[\|X\psi\|^2-(K\psi,\psi)\geq \alpha \|X\psi\|^2\]
for all $\psi\in C^{\infty}(SM)$.
\end{Definition}

Obviously a surface of non-positive curvature is $1$-controlled. The converse is also true: if a surface is $1$-controlled
then $K\leq 0$ since $(K\psi,\psi)\leq 0$ must hold for any $\psi$. The objective of this section
is to prove the following theorem:

\begin{Theorem} Let $(M,g)$ be a closed surface.
\begin{enumerate}
\item If $(M,g)$ is free of conjugate points, then it is $0$-controlled.
\item If $(M,g)$ is free of focal points, then it is $1/2$-controlled.
\item If $(M,g)$ is Anosov, then it is $\alpha$-controlled for some $\alpha>0$. Moreover, the following stronger result
holds:
\[\|X\psi\|^2-(K\psi,\psi)\geq \alpha (\|X\psi\|^2+\norm{\psi}^2)\]
for all $\psi\in C^{\infty}(SM)$.
\end{enumerate}
\label{theorem:control}
\end{Theorem}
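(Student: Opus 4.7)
The proof rests on one algebraic identity coming from the Riccati equation for the $\beta$-modified Jacobi equation $\ddot y + \beta K y = 0$. Whenever $(M,g)$ is free of $\beta$-conjugate points, standard Riccati theory on a closed manifold produces, along each geodesic, a bounded solution of $\dot r + r^2 + \beta K = 0$; these solutions assemble into a bounded (continuous in the Anosov case) function $r : SM \to \re$ satisfying $Xr + r^2 + \beta K = 0$ on $SM$. Since the geodesic flow preserves the Liouville measure, pairing this equation with $\psi^2$ and integrating by parts gives, for any $\psi \in C^\infty(SM)$,
\[
\beta (K\psi,\psi) = 2(r\psi, X\psi) - \norm{r\psi}^2 = \norm{X\psi}^2 - \norm{X\psi - r\psi}^2,
\]
which rearranges to the key identity
\[
\norm{X\psi}^2 - (K\psi,\psi) \;=\; \frac{\beta-1}{\beta}\norm{X\psi}^2 + \frac{1}{\beta}\norm{X\psi - r\psi}^2. \quad (\star)
\]

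The three assertions (in their basic form) then follow by choosing $\beta$ appropriately. For (1), no conjugate points is exactly $\beta_{Ter} \geq 1$, and $(\star)$ at $\beta = 1$ collapses to $\norm{X\psi}^2 - (K\psi,\psi) = \norm{X\psi - r\psi}^2 \geq 0$. For (2), the bullet to be proved in Section \ref{sec_betaconjugate} (no focal points $\Rightarrow \beta_{Ter} \geq 2$) lets me apply $(\star)$ with $\beta$ arbitrarily close to $2$ from below and pass to the supremum, producing $\norm{X\psi}^2 - (K\psi,\psi) \geq \tfrac{1}{2}\norm{X\psi}^2$. For (3), the Anosov characterization $\beta_{Ter} > 1$ lets me pick any $\beta \in (1, \beta_{Ter})$ and $(\star)$ yields the $\alpha$-controlled bound with $\alpha = 1 - 1/\beta > 0$.

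The substantive part of the argument is the strengthened Anosov inequality with the extra $\norm{\psi}^2$ term. Here I plan to exploit a feature of the Anosov hypothesis that is absent in the bare no-conjugate-points case: the splitting $TSM = E^0 \oplus E^u \oplus E^s$ gives \emph{two} distinct bounded continuous Riccati solutions $r^+$ and $r^-$ for $\beta=1$, coming from the unstable and stable Jacobi fields. Continuity of the Anosov splitting together with compactness of $SM$ upgrade the pointwise transversality $r^+ \neq r^-$ to a uniform lower bound $\abs{r^+(x,v) - r^-(x,v)} \geq c_0 > 0$ on all of $SM$. Applying $(\star)$ at $\beta = 1$ once with $r = r^+$ and once with $r = r^-$, the left-hand side equals $Q(\psi) := \norm{X\psi}^2 - (K\psi,\psi)$ in both cases, so $\norm{X\psi - r^+\psi} = \norm{X\psi - r^-\psi} = \sqrt{Q(\psi)}$. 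A single application of the triangle inequality then gives
\[
c_0 \norm{\psi} \;\leq\; \norm{(r^+ - r^-)\psi} \;\leq\; \norm{X\psi - r^+\psi} + \norm{X\psi - r^-\psi} \;=\; 2\sqrt{Q(\psi)},
\]
so $Q(\psi) \geq (c_0^2/4)\norm{\psi}^2$. Combining with the bound $Q(\psi) \geq (1 - 1/\beta)\norm{X\psi}^2$ from the basic Anosov step yields the coercive estimate $Q(\psi) \geq \alpha(\norm{X\psi}^2 + \norm{\psi}^2)$ with $\alpha = \tfrac{1}{2}\min(1-1/\beta,\, c_0^2/4)$.

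The main obstacle is not the algebra of $(\star)$ but the Riccati apparatus it feeds on: the correspondence between freedom from $\beta$-conjugate points and the existence of a bounded Riccati solution on all of $SM$, together with the existence, continuity, and uniform transversality of $r^\pm$ in the Anosov case. These structural facts come from the $\beta$-cocycle framework to be developed in Section \ref{sec_betaconjugate} and from classical theory of the Anosov splitting on surfaces; once they are in place, Theorem \ref{theorem:control} reduces to the bookkeeping with $(\star)$ carried out above.
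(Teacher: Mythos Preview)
Your identity $(\star)$ is correct and is exactly the content of the paper's Remark~\ref{remark:linkbeta}; the overall strategy is the same as the paper's, and parts (1) and the strengthened (3) go through essentially as you wrote them. Two points deserve comment.

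\medskip

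\textbf{Circularity in (2).} You invoke the implication ``no focal points $\Rightarrow \beta_{Ter}\geq 2$'' from Section~\ref{sec_betaconjugate}. But in the paper that implication is not proved independently: Section~\ref{sec_betaconjugate} says verbatim that ``the argument in the proof of Theorem~\ref{theorem:control} shows that $\beta_{Ter}\geq 2$.'' So as written your proof of (2) is circular. The paper's own argument for (2) is direct and self-contained: under no focal points the Hopf solutions satisfy $r^{+}\geq 0\geq r^{-}$, so $a:=r^{+}+r^{-}$ obeys $Xa+a^{2}+2K=2r^{+}r^{-}\leq 0$; plugging this into the general identity $\norm{X\psi-a\psi}^{2}=\norm{X\psi}^{2}+(Xa+a^{2},|\psi|^{2})$ gives $\norm{X\psi-a\psi}^{2}\leq \norm{X\psi}^{2}-2(K\psi,\psi)$ and hence $1/2$-control. (This same computation is what \emph{later} yields $\beta_{Ter}\geq 2$, not the other way around.) You can salvage your route by inserting this one-line Riccati-inequality step before appealing to $(\star)$, but you cannot simply cite the Section~\ref{sec_betaconjugate} bullet.

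\medskip

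\textbf{Organization of (3).} The paper proves all of (3) staying at $\beta=1$: with $A=X\psi-r^{-}\psi$ and $B=X\psi-r^{+}\psi$ one has $\norm{A}=\norm{B}$, and solving $\psi=(r^{+}-r^{-})^{-1}(A-B)$, $X\psi=\lambda A+(1-\lambda)B$ gives both $\norm{\psi}^{2}\lesssim\norm{A}^{2}$ and $\norm{X\psi}^{2}\lesssim\norm{A}^{2}$ in one stroke. Your triangle-inequality derivation of $\norm{\psi}^{2}\lesssim Q(\psi)$ is a clean equivalent of the first of these. For the $\norm{X\psi}^{2}$ bound, however, you detour through a $\beta>1$ Riccati solution and hence through the whole $\beta$-cocycle apparatus (Corollary~\ref{corollary:ganosov}); this is not circular, but it is a heavy forward reference that the paper avoids. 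You could instead stay at $\beta=1$: once $\norm{\psi}\leq (2/c_{0})\sqrt{Q(\psi)}$ is in hand, $\norm{X\psi}\leq \norm{X\psi-r^{+}\psi}+\norm{r^{+}}_{\infty}\norm{\psi}\leq C\sqrt{Q(\psi)}$ immediately.
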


\begin{proof} If $(M,g)$ has no conjugate points, a well known result due to E. Hopf \cite{H} gives the existence
of a bounded measurable function $r:SM\to\re$ such that $r$ is differentiable along the geodesic flow
and satisfies the Riccati equation:
\begin{equation}
Xr+r^2+K=0.
\label{eq:riccati}
\end{equation}
Let $a:SM\to\re$ be any bounded measurable function differentiable along the geodesic flow and let us compute
\begin{equation*}
\begin{aligned}
|X\psi-a\psi|^2&=|X\psi|^2-2\Re(a(X\psi) \bar{\psi})+a^2|\psi|^2\\
&=|X\psi|^2+|\psi|^2(Xa+a^2)-X(a|\psi|^2)
\end{aligned}
\end{equation*}
Integrating this equality over $SM$ and using that the volume form $d(SM)$ is invariant under the
geodesic flow we obtain
\begin{equation}
\norm{X\psi-a\psi}^2=\norm{X\psi}^2+(Xa+a^2,|\psi|^2).
\label{eq:general}
\end{equation}
We now make use of the fact that $a=r$ satisfies the Riccati equation to obtain:
\begin{equation}
\norm{X\psi-r\psi}^2=\norm{X\psi}^2-(K\psi,\psi).
\label{eq:usingr}
\end{equation}
This clearly shows item $(1)$.  In fact, Hopf in \cite{H} shows the existence of two
bounded measurable solutions to (\ref{eq:riccati}) which we call $r^{+}$ and $r^{-}$; they are related by $r^{+}(x,v)=-r^{-}(x,v)$.
From the construction
of these functions it is immediate that if $(M,g)$ is free of focal points then
$r^{+}\geq 0$ and $r^{-}\leq 0$ (compare with \cite{SU}).
 Let $a:=r^{+}+r^{-}$. A simple calculation shows that $a$ satisfies
\[Xa+a^2+2K=2r^{+}r^{-}\leq 0.\]
Using this function $a$ in equality (\ref{eq:general}) we derive
\[\norm{X\psi-a\psi}^2\leq \norm{X\psi}^2-2(K\psi,\psi)\]
which proves item $(2)$.

To prove item $(3)$ we shall exploit the fact that in the Anosov case we
have two continuous (in fact $C^1$) solutions $r^{+},r^{-}$ of the Riccati equation with $r^{+}-r^{-}>0$ everywhere. In \cite{Ebe}, Eberlein shows that a surface with no conjugate points is Anosov
if and only if the limit solutions $r^{+}$ and $r^{-}$ constructed by Hopf are distinct everywhere (later on in Section 7 we will generalize this result for the case of the $\beta$-Jacobi equation).
If this happens then $-X_{\perp}+r^{+,-}V$ spans the bundle $E^{s,u}$. Since the latter is
known to be of class $C^1$ for a surface \cite{HPS}, it follows that in the Anosov case, $r^{+}$ and $r^{-}$
are $C^1$.

Let $A:=X\psi-r^{-}\psi$ and $B:=X\psi-r^{+}\psi$. Using equation (\ref{eq:usingr}) we see that $\|A\|=\|B\|$.
Solving for $\psi$ and $X\psi$ we obtain
\begin{align*}
\psi&=(r^{+}-r^{-})^{-1}(A-B)\\
X\psi&=\lambda A+(1-\lambda)B,
\end{align*}
where
\[\lambda:=\frac{r^{+}}{r^{+}-r^{-}}.\]
From these equations it follows that there exists a constant $\alpha>0$ such that
\[2\alpha\norm{\psi}^2\leq \norm{A}^2,\]
\[2\alpha\norm{X\psi}^2\leq \norm{A}^2\]
and item $(3)$ is proved.

\end{proof}

\begin{Remark}{\rm The proof above shows the following general statement: if there exists
a bounded measurable function $a:SM\to\re$ such that
\[Xa+a^2+\beta K\leq 0\]
then the surface is $(\beta-1)/\beta$-controlled.}
\label{remark:linkbeta}
\end{Remark}

\section{Surjectivity of $I_{0}^*$}
\label{surjectiveadjoint}

In this section we will prove Theorem \ref{thm_main_i0star} in the introduction. The strategy is to deduce properties of the ray transform $I_0$ from properties of the operator $P = VX$ as in \cite{PSU_tensor}. The following result characterizes the injectivity of $I_0$ in terms of $P$.

\begin{Lemma}
Suppose $(M,g)$ has Anosov geodesic flow. The map $I_0: C^{\infty}(M) \to \mathrm{Maps}(\mathcal{G},\re)$ is injective if and only if the only solutions $u \in C^{\infty}(SM)$ of $Pu = 0$ in $SM$ are the constants.
\end{Lemma}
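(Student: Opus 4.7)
The plan is to prove both implications using only two inputs already flagged in the paper: the smooth Livsic theorem (\cite{L1,L2} plus the regularity addendum \cite{dLMM}) and the ergodicity of the Anosov geodesic flow (cited in the introduction via \cite{A0,H0}). Everything else is bookkeeping about how $V$ relates functions on $SM$ to functions on $M$.

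For the forward direction, I would start with $u \in C^{\infty}(SM)$ satisfying $Pu = VXu = 0$. Since $V$ generates the vertical $S^1$-action on $SM$, any smooth function killed by $V$ descends to a smooth function on $M$; hence there exists $f \in C^{\infty}(M)$ with $Xu = f \circ \pi$. For a closed geodesic $\gamma \in \mathcal{G}$ of period $T$ starting at $(x,v)$, the fundamental theorem of calculus gives
\[
I_0 f(\gamma) \;=\; \int_0^T f(\gamma(t))\,dt \;=\; \int_0^T (Xu)(\phi_t(x,v))\,dt \;=\; u(\phi_T(x,v)) - u(x,v) \;=\; 0.
\]
Injectivity of $I_0$ on $C^{\infty}(M)$ then forces $f \equiv 0$, so $Xu = 0$. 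By ergodicity of the Anosov geodesic flow, every continuous flow-invariant function is constant, so $u$ is constant.

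For the converse, I would take $f \in C^{\infty}(M)$ with $I_0 f = 0$ and view it as a smooth function on $SM$ via $\pi$. The hypothesis says $f \circ \pi$ has zero integral over every closed orbit of $\phi_t$, and the closed orbits form a dense subset in the Anosov setting, so Livsic's theorem together with the smoothness addendum of \cite{dLMM} produces $u \in C^{\infty}(SM)$ with $Xu = f \circ \pi$. Applying $V$ and using that $f \circ \pi$ is independent of the fiber variable gives $Pu = VXu = V(f\circ\pi) = 0$. By the standing hypothesis $u$ must be constant, and then $f = Xu = 0$, establishing injectivity of $I_0$.

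The only substantive ingredient is the invocation of the smooth Livsic theorem, which is the standard mechanism for going from vanishing of closed-orbit integrals back to a smooth primitive; no new estimates are needed for this lemma.
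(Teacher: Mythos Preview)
Your proof is correct and is exactly the argument the paper has in mind: the one-line proof in the paper (``This follows immediately from the ergodicity of an Anosov flow and the Livsic theorem'') unpacks precisely into your two implications, using Livsic to produce the smooth primitive and ergodicity to conclude that flow-invariant smooth functions are constant.
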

\begin{proof} This follows immediately from the ergodicity of an Anosov flow and the Livsic theorem \cite{dLMM}.
\end{proof}

The next inequalities express the uniqueness properties of $P$ under various assumptions. If $E$ is a subspace of $\mDp(SM)$, we write $E_{\diamond}$ for the subspace of those $v \in E$ with $\langle v, 1 \rangle = 0$.

\begin{Lemma} \label{lemma_p_inequalities}
Let $(M,g)$ be a closed surface.
\begin{enumerate}
\item[(a)]
If $(M,g)$ has no conjugate points, then 
$$
\norm{Xu}\leq \norm{Pu}, \quad u \in C^{\infty}(SM).
$$
\item[(b)]
If $(M,g)$ is Anosov, then there is a constant $C$ such that 
$$
\norm{u}_{H^1} \leq C \norm{Pu}, \quad u \in C^{\infty}_{\diamond}(SM).
$$

\end{enumerate}
\end{Lemma}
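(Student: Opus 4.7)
My plan is to extract both inequalities directly from the Pestov identity \eqref{energy_identity} combined with the $\alpha$-control statements of Theorem \ref{theorem:control}. Recall $P = VX$, so
$$
\norm{Pu}^2 = \norm{VXu}^2 = \norm{XVu}^2 - (KVu,Vu) + \norm{Xu}^2
$$
is an immediate rewriting of the Pestov identity. The point is that the right-hand side has the shape to which the controlled inequalities apply, with test function $\psi = Vu$.

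For part (a), the surface has no conjugate points, so by Theorem \ref{theorem:control}(1) it is $0$-controlled, i.e.\ $\norm{X\psi}^2 - (K\psi,\psi) \geq 0$ for every $\psi$. Taking $\psi = Vu$ and substituting into the displayed identity gives $\norm{Pu}^2 \geq \norm{Xu}^2$, which is the desired bound.

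For part (b), the Anosov hypothesis gives via Theorem \ref{theorem:control}(3) the sharper inequality $\norm{X\psi}^2 - (K\psi,\psi) \geq \alpha(\norm{X\psi}^2 + \norm{\psi}^2)$ for some $\alpha > 0$; applying it with $\psi = Vu$ yields
$$
\norm{Pu}^2 \geq \norm{Xu}^2 + \alpha \norm{XVu}^2 + \alpha \norm{Vu}^2.
$$
This already bounds $\norm{Xu}$, $\norm{Vu}$ and $\norm{XVu}$ in terms of $\norm{Pu}$. To control $\norm{X_{\perp}u}$, I would invoke the commutator relation $X_{\perp} = [X,V]$, so $X_{\perp}u = XVu - VXu = XVu - Pu$, whence $\norm{X_{\perp}u}^2 \leq 2\norm{XVu}^2 + 2\norm{Pu}^2 \lesssim \norm{Pu}^2$. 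Collecting terms, the Sasaki gradient satisfies $\norm{\nabla u}^2 = \norm{Xu}^2 + \norm{X_{\perp}u}^2 + \norm{Vu}^2 \leq C \norm{Pu}^2$.

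To finish, I need to upgrade this bound on $\norm{\nabla u}$ to a bound on $\norm{u}_{H^1}$, which requires an $L^2$ bound on $u$ itself. This is where the hypothesis $u \in C^{\infty}_{\diamond}(SM)$ enters: for mean-zero $u$ the Poincar\'e inequality on the closed Riemannian manifold $(SM, \text{Sasaki})$ gives $\norm{u} \leq C \norm{\nabla u}$, with constant determined by the first nonzero eigenvalue of the Sasaki Laplacian. Combining, $\norm{u}_{H^1}^2 \leq C \norm{Pu}^2$, completing part (b). The argument is essentially routine once the Pestov identity and controlled inequality are lined up; the only conceptual step is the commutator trick to recover $X_{\perp}u$, since the Pestov identity only directly feeds us $XVu$ and $Xu$.
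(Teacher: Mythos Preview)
Your proof is correct and matches the paper's own argument essentially line for line: both parts rewrite the Pestov identity as $\norm{Pu}^2 = \norm{XVu}^2 - (KVu,Vu) + \norm{Xu}^2$, apply Theorem~\ref{theorem:control} with $\psi = Vu$, and for (b) use the commutator relation $X_{\perp}u = XVu - Pu$ followed by the Poincar\'e inequality on $SM$. There is nothing to add.
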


\begin{proof}
Item (a) follows from the energy identity (\ref{energy_identity}). The identity reads 
$$
\norm{Pu}^2 = \norm{XVu}^2 - (KVu,Vu) + \norm{Xu}^2, \quad u \in C^{\infty}(SM).
$$
On a surface with no conjugate points, one has by item (1) in Theorem \ref{theorem:control}, $\norm{XVu}^2 - (KVu,Vu) \geq 0$ for any $u \in C^{\infty}(SM)$. This proves (a).

To prove (b) we use the identity above together with
item (3) in Theorem \ref{theorem:control} to derive:
\[\norm{Pu}^2\geq \norm{Xu}^2+\alpha(\norm{Vu}^2+\norm{XVu}^2).\]
Using that $X_{\perp}u=XVu-VXu=XVu-Pu$ we also obtain
\[\norm{X_{\perp}u}^2\leq 2(\norm{XVu}^2+\norm{Pu}^2)\]
and hence there is a constant $C'$ for which
\[C'\norm{Pu}^2\geq \norm{X_{\perp}u}^2+\norm{Vu}^2+\norm{Xu}^2.\]
By the Poincar\'e inequality for closed Riemannian manifolds, there
is another constant $D$ such that
\[\|u\|^2\leq D(\|Xu\|^2+\|X_{\perp}u\|^2+\|Vu\|^2)\]
for all $u\in C^{\infty}_{\diamond}(SM)$ and hence there is a constant $C$ such that
\[\norm{u}_{H^1} \leq C \norm{Pu}\]
for all $u\in C^{\infty}_{\diamond}(SM)$ as desired.
\end{proof}

We now convert the previous uniqueness result for $P$ into a solvability result for $P^* = XV$.

\begin{Lemma} \label{lemma_padjoint_surjective}
Let $(M,g)$ be an Anosov surface. For any $f \in H^{-1}_{\diamond}(SM)$ there is a solution $h\in L^2(SM)$ of the equation 
$$
P^* h = f \quad \text{in } SM.
$$
Further, $\norm{h}_{L^2} \leq C \norm{f}_{H^{-1}}$ with $C$ independent of $f$.
\end{Lemma}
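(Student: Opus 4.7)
The plan is to use the estimate in Lemma \ref{lemma_p_inequalities}(b) together with a Hahn--Banach / Riesz representation argument, which is the standard route for turning a coercivity bound on $P$ into a surjectivity statement for $P^{*}$.

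First I would observe that by Lemma \ref{lemma_p_inequalities}(b) the operator
$$P:C^{\infty}_{\diamond}(SM)\to L^{2}(SM)$$
is injective (its kernel would consist of constants, which are excluded by the $\diamond$ condition), and satisfies
$$\|u\|_{H^{1}}\leq C\|Pu\|_{L^{2}}\qquad\text{for all }u\in C^{\infty}_{\diamond}(SM).$$
Given $f\in H^{-1}_{\diamond}(SM)$, I would then define a linear functional $\ell$ on the range $R:=P(C^{\infty}_{\diamond}(SM))\subset L^{2}(SM)$ by
$$\ell(Pu):=\langle f,u\rangle,\qquad u\in C^{\infty}_{\diamond}(SM).$$
This is well-defined by injectivity of $P$ on $C^{\infty}_{\diamond}(SM)$. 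The crucial estimate is
$$|\ell(Pu)|=|\langle f,u\rangle|\leq\|f\|_{H^{-1}}\,\|u\|_{H^{1}}\leq C\|f\|_{H^{-1}}\,\|Pu\|_{L^{2}},$$
so $\ell$ is a bounded linear functional on the subspace $R$ of $L^{2}$ with norm at most $C\|f\|_{H^{-1}}$.

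Next I would apply the Hahn--Banach theorem to extend $\ell$ to a bounded linear functional on all of $L^{2}(SM)$ preserving the norm, and then invoke the Riesz representation theorem to produce $h\in L^{2}(SM)$ with $\ell(v)=(v,h)$ for every $v\in L^{2}(SM)$ and $\|h\|_{L^{2}}\leq C\|f\|_{H^{-1}}$. In particular
$$(Pu,h)=\langle f,u\rangle\qquad\text{for all }u\in C^{\infty}_{\diamond}(SM).$$
To upgrade this to the identity $P^{*}h=f$ in $\mDp(SM)$, I would take an arbitrary $\varphi\in C^{\infty}(SM)$ and split it as $\varphi=\varphi_{\diamond}+c$ where $c$ is its mean value; then $P\varphi=P\varphi_{\diamond}$ and, because $f\in H^{-1}_{\diamond}(SM)$, also $\langle f,\varphi\rangle=\langle f,\varphi_{\diamond}\rangle$. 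Hence
$$\langle P^{*}h,\varphi\rangle=(h,P\varphi)=(h,P\varphi_{\diamond})=\langle f,\varphi_{\diamond}\rangle=\langle f,\varphi\rangle,$$
so $P^{*}h=f$ as distributions, with the asserted $L^{2}$ bound on $h$.

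The conceptual heart of the argument is already in Lemma \ref{lemma_p_inequalities}(b); the rest is a soft duality argument. The only point that needs a bit of care, and which I would flag as the main bookkeeping obstacle, is the interplay between the $\diamond$ subspaces and the constant functions: one must verify that the compatibility condition $\langle f,1\rangle=0$ both guarantees that the functional $\ell$ can be defined on $R$ (via $u\in C^{\infty}_{\diamond}$) and that the distributional identity $P^{*}h=f$ actually extends from test functions in $C^{\infty}_{\diamond}(SM)$ to all of $C^{\infty}(SM)$.
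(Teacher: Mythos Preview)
Your proposal is correct and is essentially identical to the paper's proof: both use the coercivity estimate from Lemma \ref{lemma_p_inequalities}(b) to define a bounded functional on $P(C^{\infty}_{\diamond}(SM))$, extend by Hahn--Banach, and represent via Riesz. Your final paragraph splitting $\varphi=\varphi_{\diamond}+c$ makes explicit what the paper summarizes in one line (``since $f$ is orthogonal to constants it follows that $P^{*}h=f$'').
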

\begin{proof}
Consider the subspace $P C^{\infty}_{\diamond}(SM)$ of $L^2(SM)$. Any element $v$ in this subspace has a unique representation as $v = Pu$ for some $u \in C^{\infty}_{\diamond}(SM)$ by Lemma \ref{lemma_p_inequalities}. Given $f \in H^{-1}_{\diamond}(SM)$, define the linear functional 
$$
l: P C^{\infty}_{\diamond}(SM) \to \C, \ \ l(Pu) = \langle u, f \rangle.
$$
This functional satisfies by Lemma \ref{lemma_p_inequalities} 
$$
\abs{l(Pu)} \leq \norm{f}_{H^{-1}} \norm{u}_{H^1} \leq C \norm{f}_{H^{-1}} \norm{Pu}_{L^2}.
$$
Thus $l$ is continuous on $P C^{\infty}_{\diamond}(SM)$, and by the Hahn-Banach theorem it has a continuous extension 
$$
\bar{l}: L^2(SM) \to \C, \quad \abs{\bar{l}(v)} \leq C \norm{f}_{H^{-1}} \norm{v}_{L^2}.
$$
By the Riesz representation theorem, there is $h \in L^2(SM)$ with 
$$
\bar{l}(v) = (v,h)_{L^2(SM)}, \quad \norm{h}_{L^2} \leq C \norm{f}_{H^{-1}}.
$$
If $u \in C^{\infty}_{\diamond}(SM)$, we have 
\[\langle u, P^*h \rangle = \langle Pu, h \rangle = l(Pu) = \langle u,f \rangle\]
and since $f$ is orthogonal to constants it follows that $P^*h=f$.
\end{proof}

We can now prove surjectivity of $I_0^*$.

\begin{proof}[Proof of Theorem \ref{thm_main_i0star}]
Given $f \in C^{\infty}(M)$, we use Lemma \ref{lemma_padjoint_surjective} to find $h \in L^2(SM)$ satisfying 
$$
P^* h = -Xf.
$$
Define $w = Vh + f$. Then 
$$
Xw = XVh + Xf = P^* h + Xf = 0
$$
and $w_0 = f$ as required. In order to show that $w_{2j}$ are smooth observe that
$Xw=0$ means that $\eta_{+}w_{k-1}+\eta_{-}w_{k+1}=0$. Hence 
$\eta_{-}w_2=-\eta_{+}w_0=-\eta_{+}f$. Since the operators $\eta_{\pm}$ are elliptic
and $f$ is smooth it follows that $w_2$ is smooth. Inductively, we obtain that $w_{2j}$
is smooth for every $j$.
\end{proof}

In fact, the surjectivity of $P^*$ easily implies a more general form of Theorem \ref{thm_main_i0star}.

\begin{Theorem} \label{lemma_surjectivity_stronger}
Let $g \in H^{-1}_{\diamond}(SM)$ and $f \in L^2(M)$. There exists $w \in H^{-1}(SM)$ satisfying $Xw = g$ in $SM$ and $w_0 = f$.
\end{Theorem}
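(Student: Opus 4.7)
The plan is to mimic the proof of Theorem \ref{thm_main_i0star} almost verbatim, but to feed a more general right-hand side into the surjectivity Lemma \ref{lemma_padjoint_surjective}. I would look for $w$ in the form $w := Vh + f$ with $h \in L^2(SM)$ to be chosen, so that formally
\[
Xw = XVh + Xf = P^{*}h + Xf.
\]
Asking $Xw = g$ is then equivalent to solving $P^{*}h = g - Xf$ in $SM$.

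Before invoking Lemma \ref{lemma_padjoint_surjective}, I would check that the right-hand side $g - Xf$ lies in $H^{-1}_{\diamond}(SM)$. Pulling back by $\pi$, the function $f \in L^2(M)$ is viewed as an element of $L^2(SM)$, and since $X$ is a first order differential operator with smooth coefficients, $Xf \in H^{-1}(SM)$. The Liouville volume form $d(SM)$ is preserved by the geodesic flow, hence $X^{*} = -X$ and $\langle Xf, 1 \rangle = -\langle f, X1 \rangle = 0$. Combined with $\langle g, 1 \rangle = 0$, this places $g - Xf$ in $H^{-1}_{\diamond}(SM)$. Lemma \ref{lemma_padjoint_surjective} then produces $h \in L^2(SM)$ with $P^{*}h = g - Xf$ and $\norm{h}_{L^2} \leq C\norm{g - Xf}_{H^{-1}}$.

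To conclude, set $w := Vh + f$. Because $V$ is a first order differential operator we have $Vh \in H^{-1}(SM)$, and $f \in L^2(SM) \subset H^{-1}(SM)$, so $w \in H^{-1}(SM)$. By construction $Xw = P^{*}h + Xf = g$, and expanding $h = \sum_{k} h_k$ gives $Vh = \sum_{k} ik\, h_k$, whose zeroth Fourier coefficient vanishes; since $f$ depends only on the base point, $w_0 = f$ as required.

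There is essentially no obstacle here: the content of the theorem is already encoded in the surjectivity of $P^{*}$ from $L^2$ onto $H^{-1}_{\diamond}$ established in Lemma \ref{lemma_padjoint_surjective}, and the only points that require verification are the mapping properties of $X$ and $V$ on the relevant Sobolev spaces and the fact that $X$ preserves the orthogonal complement of the constants, both of which are immediate from $d(SM)$ being flow-invariant.
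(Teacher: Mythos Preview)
Your proposal is correct and follows essentially the same approach as the paper's proof: both set $w = Vh + f$ and use Lemma \ref{lemma_padjoint_surjective} to solve $P^*h = g - Xf$. Your write-up is in fact more careful than the paper's, since you explicitly verify that $g - Xf \in H^{-1}_{\diamond}(SM)$ and that $w \in H^{-1}(SM)$, details the paper leaves implicit.
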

\begin{proof}
By Lemma \ref{lemma_padjoint_surjective} there is $h \in L^2(SM)$ with 
$$
P^* h = g - Xf. 
$$
Then $w = Vh + f \in H^{-1}(SM)$ satisfies 
$$
Xw = XVh + Xf = g
$$
and $w_0 = f$.
\end{proof}

\section{Surjectivity of $I^*_{m}$ for $m\geq 1$} \label{sec_surjective2}

In this section we prove Theorem \ref{thm_main_i1star}  and we pave the way for the proof of Theorem \ref{thm_main_imstar}.  Fix $m \geq 1$, and let $T:C^{\infty}(SM)\to \bigoplus_{|k|\geq m+1}\Omega_k$ be the projection operator 
\[Tu=\sum_{|k|\geq m+1}u_k.\]
In other words $T$ is defined by $u=\sum_{|k|\leq m}u_k+Tu$.
Now let $Q:=TVX=TP$, clearly $Q^*=XVT$, since $T$ is self-adjoint. Directly from the definitions
we have
\begin{equation}
\| Pu\|^2=\sum_{|k|\leq m}k^2\| (Xu)_{k}\|^2+\| Qu\|^2.
\label{eq:Q1}
\end{equation}

\begin{Lemma} Let $(M,g)$ be an Anosov surface. Assume there exists a constant $C$ such that
\[\|Xu\|\leq C\|Qu\|\]
for any $u\in \bigoplus_{|k|\geq m}\Omega_k$. Then there exists another
constant $D$ such that
\[\|u\|_{H^{1}}\leq D\|Qu\|\]
for any $u\in  \bigoplus_{|k|\geq m}\Omega_k$.
\label{lemma:simple}
\end{Lemma}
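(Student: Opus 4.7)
The plan is to combine the identity \eqref{eq:Q1} with the Pestov identity and item (3) of Theorem \ref{theorem:control}. The hypothesis $\|Xu\|\leq C\|Qu\|$ together with \eqref{eq:Q1} immediately controls $\|Pu\|$ in terms of $\|Qu\|$, since
\[
\sum_{|k|\leq m}k^{2}\|(Xu)_{k}\|^{2}\leq m^{2}\|Xu\|^{2}\leq m^{2}C^{2}\|Qu\|^{2},
\]
and hence $\|Pu\|^{2}\leq (m^{2}C^{2}+1)\|Qu\|^{2}$. Therefore it will suffice to produce a lower bound of the form $\|Pu\|^{2}\geq c\,\|u\|_{H^{1}}^{2}$ on the subspace $\bigoplus_{|k|\geq m}\Omega_{k}$.

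To get this lower bound, I would apply the Pestov identity \eqref{energy_identity} to get
\[
\|Pu\|^{2}=\|XVu\|^{2}-(KVu,Vu)+\|Xu\|^{2},
\]
and then invoke item (3) of Theorem \ref{theorem:control} with the test function $\psi=Vu$, which yields $\|XVu\|^{2}-(KVu,Vu)\geq \alpha(\|XVu\|^{2}+\|Vu\|^{2})$ for some $\alpha>0$ depending only on $(M,g)$. Consequently
\[
\|Pu\|^{2}\geq \alpha\|XVu\|^{2}+\alpha\|Vu\|^{2}+\|Xu\|^{2}.
\]

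The remaining pieces of the $H^{1}$-norm, namely $\|X_{\perp}u\|$ and $\|u\|$, are recovered as follows. Using $X_{\perp}=[X,V]=XV-VX$, so that $X_{\perp}u=XVu-Pu$, we obtain $\|X_{\perp}u\|^{2}\leq 2(\|XVu\|^{2}+\|Pu\|^{2})$, i.e.\ $\|XVu\|^{2}\geq \tfrac{1}{2}\|X_{\perp}u\|^{2}-\|Pu\|^{2}$. For the $L^{2}$-norm of $u$, I use that $u\in\bigoplus_{|k|\geq m}\Omega_{k}$ with $m\geq 1$, so by the Fourier characterization of $V$,
\[
\|Vu\|^{2}=\sum_{|k|\geq m}k^{2}\|u_{k}\|^{2}\geq m^{2}\|u\|^{2}\geq \|u\|^{2}.
\]
Substituting the commutator inequality into the lower bound for $\|Pu\|^{2}$ and absorbing the $\|Pu\|^{2}$ term gives
\[
(1+\alpha)\|Pu\|^{2}\geq \tfrac{\alpha}{2}\|X_{\perp}u\|^{2}+\alpha\|Vu\|^{2}+\|Xu\|^{2},
\]
and combined with $\|Vu\|^{2}\geq\|u\|^{2}$ this produces $\|Pu\|^{2}\geq c\,\|u\|_{H^{1}}^{2}$ for some $c>0$. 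Chaining with the upper bound for $\|Pu\|$ in terms of $\|Qu\|$ yields the desired inequality with $D^{2}=(1+\alpha)(m^{2}C^{2}+1)/(c\alpha)$ (constants chosen appropriately).

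The only non-routine step is the $\alpha$-control input, which is exactly the Anosov refinement provided by Theorem \ref{theorem:control}(3); without the extra $\|\psi\|^{2}$-term in that theorem, one could not close the estimate because the commutator manoeuvre costs us a copy of $\|Pu\|^{2}$ that has to be absorbed on the left. The Poincaré-type inequality $\|u\|\leq \|Vu\|/m$ used here is free because $u$ vanishes in Fourier modes $|k|<m$, which is the reason the lemma is stated on $\bigoplus_{|k|\geq m}\Omega_{k}$ rather than on all of $C^{\infty}(SM)$.
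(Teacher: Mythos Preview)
Your proof is correct and follows essentially the same route as the paper: both use \eqref{eq:Q1} together with the hypothesis to bound $\|Pu\|$ by $\|Qu\|$, and then combine the Pestov identity with the Anosov estimate of Theorem \ref{theorem:control}(3) and the commutator relation $X_{\perp}=XV-VX$ to bound $\|u\|_{H^{1}}$ by $\|Pu\|$. The paper packages the second step as a citation of Lemma \ref{lemma_p_inequalities}(b), whose proof is exactly your argument up to the very last step; there the paper appeals to the Poincar\'e inequality on $SM$ (valid because $u_{0}=0$), whereas you use the sharper and more elementary bound $\|Vu\|\geq m\|u\|$ coming from the vanishing of low Fourier modes. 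This is a harmless simplification specific to the subspace $\bigoplus_{|k|\geq m}\Omega_{k}$.

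One small correction to your closing commentary: the role of the extra $\|\psi\|^{2}$-term in Theorem \ref{theorem:control}(3) is not to permit the absorption of the $\|Pu\|^{2}$ produced by the commutator step (that absorption works regardless), but rather to supply the $\|Vu\|^{2}$-term, which is what ultimately controls $\|u\|^{2}$. Without it you would obtain $(1+\alpha)\|Pu\|^{2}\geq \tfrac{\alpha}{2}\|X_{\perp}u\|^{2}+\|Xu\|^{2}$ but have no handle on $\|Vu\|$ or $\|u\|$.
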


\begin{proof} Using equation (\ref{eq:Q1}) we see that there is a constant $c$ depending on $m$ such that
\[\|Pu\|^2\leq c\|Xu\|^2+\|Qu\|^2\]
and therefore using the hypothesis we derive the existence  of a constant $C'$ such that
\[\|Pu\|\leq C'\|Qu\|\]
for any $u\in  \bigoplus_{|k|\geq m}\Omega_k$. The result now follows from Lemma \ref{lemma_p_inequalities}.

\end{proof}

This simple lemma indicates that in order to obtain sub-elliptic estimates for the operator $Q$ we must investigate when
there exists a constant $C$ such that
\begin{equation*}
\|Xu\|\leq C\|Qu\|
\end{equation*}
for any $u\in \bigoplus_{|k|\geq m}\Omega_k$.  Certainly this estimate implies solenoidal injectivity of $I_{m}$: indeed suppose
$Xv=f$, where $f$ has degree $m$ and let $u=v-\sum_{|k|\leq m-1}v_k$. Then $Xu$ has degree $m$ and 
$Qu=TVXu=0$. Since $u\in \bigoplus_{|k|\geq m}\Omega_k$, we deduce that $Xu=0$ and hence $u=0$ which in turn implies
that $v$ has degree $m-1$ as required by $s$-injectivity.
The next proposition will be very useful for our purposes.

\begin{Proposition} Let $(M,g)$ be a closed surface which is $\alpha$-controlled and let $m$ be an integer $\geq 1$. Then given any $u\in \bigoplus_{|k|\geq m}\Omega_k$ we have
\begin{equation}
\begin{aligned}
\|Qu\|^2&\geq (1-m^2+\alpha(m+1)^2)(\|\eta_{-}u_{m+1}\|^2+\|\eta_{+}u_{-m-1}\|^2)\\
&+(1-(m-1)^2+\alpha m^2)(\|\eta_{-}u_{m}\|^2+\|\eta_{+}u_{-m}\|^2)+\alpha\|w\|^2+\|v\|^2
\end{aligned}
\label{eq:quantitative}
\end{equation}
where $v:=\sum_{|k|\geq m+1}(Xu)_k$ and $w:=\sum_{|k|\geq m+1}(XVu)_{k}$.

\label{prop:alphac}
\end{Proposition}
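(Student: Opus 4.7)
The plan is a direct computation that combines the Pestov identity with the $\alpha$-controlled hypothesis, carefully tracking which Fourier modes of $u$ are killed by the hypothesis $u\in \bigoplus_{|k|\geq m}\Omega_{k}$. The key point is to identify $\|VXu\|^{2}-\|Qu\|^{2}$ and $\|XVu\|^{2}$ explicitly in terms of the ``boundary'' modes $\eta_{-}u_{m},\eta_{+}u_{-m},\eta_{-}u_{m+1},\eta_{+}u_{-m-1}$, which are precisely the quantities that appear in the statement.

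First, using the ladder relations $(Xu)_{k}=\eta_{+}u_{k-1}+\eta_{-}u_{k+1}$ and $(XVu)_{k}=i(k-1)\eta_{+}u_{k-1}+i(k+1)\eta_{-}u_{k+1}$, I would compute the Fourier components of $Xu$ and $XVu$. Since $u_{k}=0$ for $|k|\leq m-1$, the only nonzero low-order modes are $(Xu)_{\pm(m-1)}$, $(Xu)_{\pm m}$, $(XVu)_{\pm(m-1)}$ and $(XVu)_{\pm m}$. This yields
\[
\|Xu\|^{2}=\|\eta_{-}u_{m}\|^{2}+\|\eta_{+}u_{-m}\|^{2}+\|\eta_{-}u_{m+1}\|^{2}+\|\eta_{+}u_{-m-1}\|^{2}+\|v\|^{2},
\]
and, by squaring the appropriate coefficients,
\[
\|XVu\|^{2}=m^{2}\bigl(\|\eta_{-}u_{m}\|^{2}+\|\eta_{+}u_{-m}\|^{2}\bigr)+(m+1)^{2}\bigl(\|\eta_{-}u_{m+1}\|^{2}+\|\eta_{+}u_{-m-1}\|^{2}\bigr)+\|w\|^{2}.
\]
On the other hand $\|VXu\|^{2}=\sum_{k}k^{2}\|(Xu)_{k}\|^{2}$ and since $Qu=TVXu=\sum_{|k|\geq m+1}ik\,(Xu)_{k}$, the missing modes give exactly
\[
\|VXu\|^{2}-\|Qu\|^{2}=(m-1)^{2}\bigl(\|\eta_{-}u_{m}\|^{2}+\|\eta_{+}u_{-m}\|^{2}\bigr)+m^{2}\bigl(\|\eta_{-}u_{m+1}\|^{2}+\|\eta_{+}u_{-m-1}\|^{2}\bigr).
\]

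Next, I apply the Pestov identity \eqref{energy_identity} to $u$, which rearranges to $\|VXu\|^{2}=\|Xu\|^{2}+\|XVu\|^{2}-(KVu,Vu)$. Since $(M,g)$ is $\alpha$-controlled, applying the defining inequality with $\psi=Vu$ gives $\|XVu\|^{2}-(KVu,Vu)\geq \alpha\|XVu\|^{2}$, and hence
\[
\|VXu\|^{2}\geq \|Xu\|^{2}+\alpha\|XVu\|^{2}.
\]
Substituting the three identities just obtained into this inequality and collecting terms produces exactly
\[
\|Qu\|^{2}\geq \bigl(1-(m-1)^{2}+\alpha m^{2}\bigr)\bigl(\|\eta_{-}u_{m}\|^{2}+\|\eta_{+}u_{-m}\|^{2}\bigr)+\bigl(1-m^{2}+\alpha(m+1)^{2}\bigr)\bigl(\|\eta_{-}u_{m+1}\|^{2}+\|\eta_{+}u_{-m-1}\|^{2}\bigr)+\alpha\|w\|^{2}+\|v\|^{2}.
\]

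There is no real obstacle here; the computation is purely algebraic once the Fourier bookkeeping is done correctly. The only place one has to be careful is not to forget that $(Xu)_{\pm m}$ also survive the cutoff $u_{k}=0,\ |k|<m$ (they contribute $\eta_{-}u_{m+1}$ and $\eta_{+}u_{-m-1}$ respectively), which is what produces the extra $1$'s in the coefficients $1-m^{2}+\alpha(m+1)^{2}$ and $1-(m-1)^{2}+\alpha m^{2}$ rather than simply $\alpha(m+1)^{2}-m^{2}$ and $\alpha m^{2}-(m-1)^{2}$.
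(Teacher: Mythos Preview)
Your proposal is correct and follows essentially the same approach as the paper: compute $\|Xu\|^2$, $\|XVu\|^2$, and $\|Pu\|^2-\|Qu\|^2$ (you write $\|VXu\|^2-\|Qu\|^2$, which is the same thing since $P=VX$) in terms of the boundary modes, then combine the Pestov identity with the $\alpha$-controlled inequality applied to $\psi=Vu$. The only cosmetic difference is that the paper phrases the Pestov step as $\|Pu\|^2\geq \alpha\|XVu\|^2+\|Xu\|^2$ rather than $\|VXu\|^2\geq \|Xu\|^2+\alpha\|XVu\|^2$, but these are identical.
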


\begin{proof} Recall that $X=\eta_{+}+\eta_{-}$. First note that if $u\in \bigoplus_{|k|\geq m}\Omega_k$ then
\[\|Xu\|^2=\|\eta_{-}u_{m+1}\|^2+\|\eta_{+}u_{-m-1}\|^2+\|\eta_{-}u_{m}\|^2+\|\eta_{+}u_{-m}\|^2+\|v\|^2,\]
where $v=\sum_{|k|\geq m+1}v_k=\sum_{|k|\geq m+1}(Xu)_k$. Note now that (\ref{eq:Q1}) may be written as
\[\|Pu\|^2=\|Qu\|^2+m^2(\|\eta_{-}u_{m+1}\|^2+\|\eta_{+}u_{-m-1}\|^2)+(m-1)^2(\|\eta_{-}u_{m}\|^2+\|\eta_{+}u_{-m}\|^2).\]
A similar calculation shows that
\[\|XVu\|^2=(m+1)^2(\|\eta_{-}u_{m+1}\|^2+\|\eta_{+}u_{-m-1}\|^2)+m^2(\|\eta_{-}u_{m}\|^2+\|\eta_{+}u_{-m}\|^2)+\|w\|^2,\]
where $w=\sum_{|k|\geq m+1}w_k=\sum_{|k|\geq m+1}(XVu)_{k}$. 

We make use of the key energy identity (\ref{energy_identity}):
\[\|Pu\|^2=\|XVu\|^2-(KVu,Vu)+\|Xu\|^2\]
and use the hypotheses to deduce
\[\|Pu\|^2\geq \alpha\|XVu\|^2+\|Xu\|^2.\]
Making the appropriate substitutions we obtain:
\begin{equation*}
\begin{aligned}
\|Qu\|^2&\geq (1-m^2+\alpha(m+1)^2)(\|\eta_{-}u_{m+1}\|^2+\|\eta_{+}u_{-m-1}\|^2)\\
&+(1-(m-1)^2+\alpha m^2)(\|\eta_{-}u_{m}\|^2+\|\eta_{+}u_{-m}\|^2)+\alpha\|w\|^2+\|v\|^2
\end{aligned}
\end{equation*}
as desired.

\end{proof}

\begin{Corollary} Let $(M,g)$ be an Anosov surface which is $\alpha$-controlled for
$\alpha>(m-1)/(m+1)$. Then there exists a constant $C$ such that
\[\|u\|_{H^{1}}\leq C\|Qu\|\]
for any $u\in  \bigoplus_{|k|\geq m}\Omega_k$.
\label{cor:subellipticestimate}
\end{Corollary}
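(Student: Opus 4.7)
The plan is to combine Proposition \ref{prop:alphac} with Lemma \ref{lemma:simple}. By Lemma \ref{lemma:simple}, it suffices to produce a constant $C$ with $\|Xu\|\leq C\|Qu\|$ for all $u\in\bigoplus_{|k|\geq m}\Omega_k$. From the proof of Proposition \ref{prop:alphac} we already have the identity
\[\|Xu\|^2=\|\eta_{-}u_{m+1}\|^2+\|\eta_{+}u_{-m-1}\|^2+\|\eta_{-}u_{m}\|^2+\|\eta_{+}u_{-m}\|^2+\|v\|^2,\]
so the estimate reduces to controlling each of these five nonnegative pieces by $\|Qu\|^2$.

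The first step is to check that, under the hypothesis $\alpha>(m-1)/(m+1)$, both coefficients appearing on the right-hand side of \eqref{eq:quantitative} are strictly positive. The coefficient $1-m^2+\alpha(m+1)^2$ is positive precisely when $\alpha>(m^2-1)/(m+1)^2=(m-1)/(m+1)$, which is exactly the standing hypothesis. For the second coefficient $1-(m-1)^2+\alpha m^2=m(2-m)+\alpha m^2$, positivity is equivalent to $\alpha>(m-2)/m$ when $m\geq 2$ (and it is automatic when $m=1$); since $(m-1)/(m+1)\geq (m-2)/m$ for all $m\geq 1$, our hypothesis forces this as well.

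With both coefficients bounded below by some constant $c_{0}>0$, Proposition \ref{prop:alphac} immediately yields
\[\|Qu\|^2\geq c_{0}\bigl(\|\eta_{-}u_{m+1}\|^2+\|\eta_{+}u_{-m-1}\|^2+\|\eta_{-}u_{m}\|^2+\|\eta_{+}u_{-m}\|^2\bigr)+\|v\|^2,\]
and comparison with the identity for $\|Xu\|^2$ above gives $\|Xu\|^2\leq C\|Qu\|^2$ for some constant $C$ depending only on $m$ and $\alpha$. Lemma \ref{lemma:simple} then upgrades this to the desired $H^1$ bound. There is no real obstacle beyond the elementary algebraic verification of coefficient positivity; the analytical content has already been packaged into Proposition \ref{prop:alphac} (via the Pestov identity and $\alpha$-control) and into Lemma \ref{lemma:simple} (via the Anosov assumption and Lemma \ref{lemma_p_inequalities}).
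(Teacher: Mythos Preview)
Your proof is correct and follows exactly the same route as the paper's: deduce $\|Xu\|\leq C\|Qu\|$ from Proposition~\ref{prop:alphac} and then invoke Lemma~\ref{lemma:simple}. You have simply made explicit the coefficient positivity checks and the comparison with the decomposition of $\|Xu\|^2$ that the paper leaves implicit.
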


\begin{proof} From Proposition \ref{prop:alphac}  we see that if $\alpha>(m-1)/(m+1)$, then there is a positive constant $C$ such that
\[\|Qu\|\geq C\|Xu\|.\]
We can now use Lemma \ref{lemma:simple} to prove the corollary.
\end{proof}

\begin{Lemma} Let $(M,g)$ be an Anosov surface which is $\alpha$-controlled for $\alpha>(m-1)/(m+1)$.
Then given $f\in H^{-1}(SM)$ with $f_k=0$ for $|k|\leq m-1$, there exists $h\in L^{2}(SM)$ such that
\[Q^*h=f.\]
Further, $\|h\|_{L^2}\leq C\|f\|_{H^{-1}}$ for a constant $C$ independent of $f$.
\label{lemma:solution}
\end{Lemma}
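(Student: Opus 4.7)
The plan is to mirror exactly the proof of Lemma \ref{lemma_padjoint_surjective} (surjectivity of $P^*$), with the subelliptic estimate of Corollary \ref{cor:subellipticestimate} replacing the one from Lemma \ref{lemma_p_inequalities}, and with a little extra bookkeeping to handle the low Fourier modes that $Q$ and $Q^*$ do not see.

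First I would set $E := C^{\infty}(SM)\cap\bigoplus_{|k|\geq m}\Omega_k$ and consider the subspace $Q(E)\subset L^2(SM)$. By Corollary \ref{cor:subellipticestimate}, for any $u\in E$ we have $\|u\|_{H^1}\leq C\|Qu\|$, so each $v\in Q(E)$ has a unique representative $v=Qu$ with $u\in E$. Given $f\in H^{-1}(SM)$ with $f_k=0$ for $|k|\leq m-1$, I would define
\[ l : Q(E)\to \C, \qquad l(Qu)=\langle u,f\rangle, \]
using the distributional pairing. The same subelliptic estimate gives
\[ |l(Qu)|\leq \|f\|_{H^{-1}}\|u\|_{H^1}\leq C\|f\|_{H^{-1}}\|Qu\|_{L^2}, \]
so $l$ is a continuous linear functional on $Q(E)\subset L^2(SM)$.

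Next I would invoke Hahn--Banach to extend $l$ to a continuous functional $\bar l$ on all of $L^2(SM)$ with the same norm bound, and then use the Riesz representation theorem to produce $h\in L^2(SM)$ with $\bar l(v)=(v,h)_{L^2}$ and $\|h\|_{L^2}\leq C\|f\|_{H^{-1}}$. By construction, for every $u\in E$,
\[ \langle u,Q^*h\rangle = \langle Qu,h\rangle = l(Qu)=\langle u,f\rangle, \]
where $Q^*=XVT$ is the formal adjoint from the introduction of this section.

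The last step is to upgrade this to the distributional identity $Q^*h=f$ on all of $C^{\infty}(SM)$. Given $\varphi\in C^{\infty}(SM)$, write $\varphi=\varphi_{\text{low}}+\varphi_{\text{high}}$ with $\varphi_{\text{low}}=\sum_{|k|\leq m-1}\varphi_k$ and $\varphi_{\text{high}}=\sum_{|k|\geq m}\varphi_k$, the series converging in $C^{\infty}(SM)$; in particular $\varphi_{\text{high}}\in E$. On the one hand $\langle\varphi_{\text{low}},f\rangle=0$ by the hypothesis on $f$. On the other hand $Q\varphi_{\text{low}}=0$: indeed $V$ preserves Fourier degrees and $X=\eta_++\eta_-$ shifts them by $\pm 1$, so $VX\varphi_{\text{low}}\in\bigoplus_{|k|\leq m}\Omega_k$, which is annihilated by the projector $T$. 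Hence $\langle\varphi_{\text{low}},Q^*h\rangle=\langle Q\varphi_{\text{low}},h\rangle=0$, and combining with the identity already established on $E$ (applied to $\varphi_{\text{high}}$) gives $\langle\varphi,Q^*h\rangle=\langle\varphi,f\rangle$ for every test function $\varphi$.

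There is no real obstacle once Corollary \ref{cor:subellipticestimate} is in hand; the only point that needs care is the Fourier bookkeeping. It must be checked that the condition $f_k=0$ for $|k|\leq m-1$ matches the range of $Q^*$ (which is contained in $\bigoplus_{|k|\geq m}\Omega_k$, since $VT$ lands in $\bigoplus_{|k|\geq m+1}\Omega_k$ and $X$ shifts by one), so that the natural linear functional $l$ is both well defined and captures all of $f$ when we test against the high--mode part of an arbitrary $\varphi\in C^{\infty}(SM)$.
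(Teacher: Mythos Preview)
Your proposal is correct and follows essentially the same approach as the paper's own proof: define the functional on $Q(E)$ via the subelliptic estimate of Corollary \ref{cor:subellipticestimate}, extend by Hahn--Banach, represent by Riesz, and then use $Q(\sum_{|k|\leq m-1}\varphi_k)=0$ together with $f_k=0$ for $|k|\leq m-1$ to pass from $E$ to all of $C^{\infty}(SM)$. The paper's version is slightly more compressed (it handles the low-mode bookkeeping in a single chain of equalities rather than splitting $\varphi=\varphi_{\text{low}}+\varphi_{\text{high}}$), but the argument is identical.
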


\begin{proof} The proof is quite similar to that of Lemma \ref{lemma_padjoint_surjective}.
Consider the subspace $Q \bigoplus_{|k|\geq m}\Omega_k$ of $L^2(SM)$. Any element $v$ in this subspace has a unique representation as $v = Qu$ for some $u \in \bigoplus_{|k|\geq m}\Omega_k$ by Corollary \ref{cor:subellipticestimate}. Given $f$ as in the statement of the lemma, define the linear functional 
$$
l: Q \bigoplus_{|k|\geq m}\Omega_k \to \C, \ \ l(Qu) = \langle u, f \rangle.
$$
This functional satisfies by Corollary \ref{cor:subellipticestimate}
$$
\abs{l(Qu)} \leq \norm{f}_{H^{-1}} \norm{u}_{H^1} \leq C \norm{f}_{H^{-1}} \norm{Qu}_{L^2}.
$$
Thus $l$ is continuous on $Q \bigoplus_{|k|\geq m}\Omega_k$, and by the Hahn-Banach theorem it has a continuous extension 
$$
\bar{l}: L^2(SM) \to \C, \quad \abs{\bar{l}(v)} \leq C \norm{f}_{H^{-1}} \norm{v}_{L^2}.
$$
By the Riesz representation theorem, there is $h \in L^2(SM)$ with 
$$
\bar{l}(v) = (v,h)_{L^2(SM)}, \quad \norm{h}_{L^2} \leq C \norm{f}_{H^{-1}}.
$$
If $u \in C^{\infty}(SM)$, we have 
\begin{align*}
\langle u, Q^*h \rangle &= \langle Qu, h \rangle = \langle Q(u - \sum_{|k|\leq m-1} u_k),h \rangle = l( Q(u - \sum_{|k|\leq m-1} u_k)) \\
 &= \langle  u - \sum_{|k|\leq m-1} u_k,f \rangle = \langle u,f \rangle,
\end{align*}
where the last equality holds because $f_k=0$ for all $k$ with $|k|\leq m-1$.

\end{proof}

\begin{Theorem}[Surjectivity of $I_{1}^*$] Let $(M,g)$ be an Anosov surface. Suppose $a_{-1}+a_{1}\in\Omega_{-1}\oplus \Omega_{1}$
satisfies $\eta_{+}a_{-1}+\eta_{-}a_{1}=0$. Then there exists $w\in H^{-1}(SM)$ such that
$Xw=0$ and $w_{-1}+w_{1}=a_{-1}+a_{1}$.
\label{theorem:sur1}
\end{Theorem}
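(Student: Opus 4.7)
The plan is to mimic exactly the proof of Theorem \ref{thm_main_i0star}, but with the operator $Q = TVX$ of Section~\ref{sec_surjective2} in place of $P$, using the index $m=1$. Since $(M,g)$ is Anosov, Theorem~\ref{theorem:control}(3) guarantees that it is $\alpha$-controlled for some $\alpha>0$, and for $m=1$ the requirement $\alpha>(m-1)/(m+1)=0$ is automatic. Consequently Lemma~\ref{lemma:solution} is available: for any $f\in H^{-1}(SM)$ with $f_{0}=0$ there is $h\in L^{2}(SM)$ solving $Q^{*}h=f$.

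The key computation is that the solenoidal assumption $\eta_{+}a_{-1}+\eta_{-}a_{1}=0$ makes $XA$ purely of degree two, where $A:=a_{-1}+a_{1}$. Indeed, using $X=\eta_{+}+\eta_{-}$,
\[
XA \;=\; (\eta_{+}a_{-1}+\eta_{-}a_{1}) + \eta_{+}a_{1}+\eta_{-}a_{-1} \;=\; \eta_{+}a_{1}+\eta_{-}a_{-1}\;\in\;\Omega_{-2}\oplus\Omega_{2}.
\]
In particular $(-XA)_{0}=0$, so Lemma~\ref{lemma:solution} produces $h\in L^{2}(SM)$ with
\[
Q^{*}h \;=\; XVTh \;=\; -XA.
\]

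Now set $w:=VTh+A$. Since $Th\in L^{2}(SM)$, we have $w\in H^{-1}(SM)$, and
\[
Xw \;=\; XVTh+XA \;=\; -XA+XA \;=\; 0
\]
in the distributional sense. Because $V$ acts diagonally in the Fourier decomposition as multiplication by $ik$ on $\Omega_{k}$, and $Th$ has no Fourier components with $|k|\leq 1$, the distribution $VTh$ vanishes in degrees $0,\pm 1$. Therefore $w_{\pm 1}=a_{\pm 1}$, as required.

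It remains to establish smoothness of $w_{k}$ for all odd $k$. The equation $Xw=0$ translates, on each Fourier component, to $\eta_{+}w_{k-1}+\eta_{-}w_{k+1}=0$. Starting from the smooth data $w_{1}=a_{1}$ and $w_{-1}=a_{-1}$, this gives $\eta_{-}w_{3}=-\eta_{+}a_{1}\in C^{\infty}$ and $\eta_{+}w_{-3}=-\eta_{-}a_{-1}\in C^{\infty}$, so by ellipticity of $\eta_{\pm}$ on the relevant bundles (Lemma~\ref{lemma:conformal} and the ellipticity of $\dbar,\partial$) the distributions $w_{\pm 3}$ are smooth; an induction on $|k|$ yields $w_{2j+1}\in C^{\infty}(SM)$ for every $j\in\Z$. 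No genuine obstacle arises: the proof is a mechanical adaptation of Theorem~\ref{thm_main_i0star} once one notices that the solenoidal hypothesis is precisely what annihilates the forbidden zeroth Fourier mode of $XA$.
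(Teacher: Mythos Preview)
Your proof is correct and follows essentially the same route as the paper: invoke Theorem~\ref{theorem:control}(3) to get $\alpha$-control with $\alpha>0=(m-1)/(m+1)$ for $m=1$, observe that the solenoidal hypothesis kills the zeroth Fourier mode of $XA$, apply Lemma~\ref{lemma:solution} to solve $Q^{*}h=-XA$, and set $w=VTh+A$. Your additional paragraph on the smoothness of the odd Fourier components is not part of Theorem~\ref{theorem:sur1} proper but is exactly the argument the paper uses for that claim in the proof of Theorem~\ref{thm_main_i1star}.
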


\begin{proof} On account of Theorem \ref{theorem:control}, we know that any Anosov surface is $\alpha$-controlled for some $\alpha>0$, hence the hypotheses of
Lemma \ref{lemma:solution} are satisfied for $m=1$. Let $f:=-X(a_{-1}+a_{1})$ and note that
$\eta_{+}a_{-1}+\eta_{-}a_{1}=0$ is equivalent to saying that $f_0=0$. Thus by Lemma \ref{lemma:solution}
there is a function $h\in L^2(SM)$ such that
\[Q^*h=XVTh=-X(a_{-1}+a_{1}).\]
If we let $w:=VTh+a_{-1}+a_{1}$, then $Xw=0$ and $w_{-1}+w_{1}=a_{-1}+a_{1}$.
\end{proof}

Actually, the proof shows that $w=V T h+a_{-1}+a_{1}$ where $h\in L^2(SM)$, $\norm{h}_{L^2} \leq C \norm{a_{-1} + a_1}_{L^2}$. This result easily implies the following which will be the main tool in the proof of Theorem \ref{thm:I2}.  We use the mixed norm spaces 
$$
L^2_x H^{s}_{\theta}(SM) = \{ u \in \mDp(SM) \,;\, \norm{u}_{L^2_x H^s_{\theta}} < \infty \}, \ \ \norm{u}_{L^2_x H^s_{\theta}} = \left( \sum_{k=-\infty}^{\infty} \langle k \rangle^{2s }\norm{u_k}_{L^2}^2 \right)^{1/2},
$$
where as usual $\langle k \rangle=(1+k^2)^{1/2}$.

\begin{Theorem} Let $(M,g)$ be an Anosov surface. Suppose $a_{1}\in \Omega_{1}$ and $\eta_{-}a_{1}=0$. Then there exists $w = \sum_{k=1}^{\infty} w_k \in L^2_x H^{-1}_{\theta}(SM)$ such that $Xw=0$, $w_{1}=a_{1}$, each $w_k$ is in $C^{\infty}(SM)$, and 
$$
\norm{w}_{L^2_x H^{-1}_{\theta}} \leq C \norm{a_1}_{L^2}.
$$
\label{theorem:sur1'}
\end{Theorem}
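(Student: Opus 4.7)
The plan is to redo the Hahn--Banach construction of Lemma \ref{lemma:solution} on the one-sided Fourier space. Write $\mathcal{V}:=\bigoplus_{k\geq 1}\Omega_{k}$ and let $\pi_{\geq 2}$ denote orthogonal projection in $L^{2}(SM)$ onto $\overline{\bigoplus_{k\geq 2}\Omega_{k}}$. Define $Q_{+}:\mathcal{V}\to L^{2}(SM)$ by $Q_{+}u:=\pi_{\geq 2}Pu$. I will seek $w$ in the form $w=a_{1}+Vg$, where $g\in L^{2}(SM)$ is to be found with Fourier support in modes $k\geq 2$ and satisfying $XVg=-\eta_{+}a_{1}$. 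Since $\eta_{-}a_{1}=0$ gives $Xa_{1}=\eta_{+}a_{1}$, this forces $Xw=0$, and by construction $w_{k}=0$ for $k\leq 0$, $w_{1}=a_{1}$, and $w_{k}=ikg_{k}$ for $k\geq 2$; the bound $\sum_{k\geq 2}\langle k\rangle^{-2}k^{2}\|g_{k}\|^{2}\leq\|g\|_{L^{2}}^{2}$ then yields the $L^{2}_{x}H^{-1}_{\theta}$ estimate.

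The crux is a one-sided sub-elliptic estimate
\[\|u\|_{H^{1}}\leq C\|Q_{+}u\|_{L^{2}},\qquad u\in\mathcal{V}.\]
This is a truncation of Proposition \ref{prop:alphac}: for $u\in\mathcal{V}$ a direct Fourier calculation gives $\|Pu\|^{2}=\|\eta_{-}u_{2}\|^{2}+\|Q_{+}u\|^{2}$, $\|Xu\|^{2}=\|\eta_{-}u_{1}\|^{2}+\|\eta_{-}u_{2}\|^{2}+\|\pi_{\geq 2}Xu\|^{2}$, and $\|XVu\|^{2}=\|\eta_{-}u_{1}\|^{2}+4\|\eta_{-}u_{2}\|^{2}+\|\pi_{\geq 2}XVu\|^{2}$. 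Combining the Pestov identity with item $(3)$ of Theorem \ref{theorem:control} (in the form $\|Pu\|^{2}\geq\alpha\|XVu\|^{2}+\|Xu\|^{2}$) and substituting yields
\[\|Q_{+}u\|^{2}\geq (1+\alpha)\|\eta_{-}u_{1}\|^{2}+4\alpha\|\eta_{-}u_{2}\|^{2}+\alpha\|\pi_{\geq 2}XVu\|^{2}+\|\pi_{\geq 2}Xu\|^{2}.\]
In particular $\|\eta_{-}u_{2}\|\leq C\|Q_{+}u\|$, hence $\|Pu\|\leq C\|Q_{+}u\|$, and since $u$ has vanishing zeroth Fourier mode Lemma \ref{lemma_p_inequalities}(b) closes the estimate. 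I expect this computation to be the main technical step, but exactly because we sit in the $m=1$ regime (where even the two-sided version of Proposition \ref{prop:alphac} only requires $\alpha>0$) no geometric restriction beyond Anosov is needed.

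With the sub-elliptic estimate at hand, the rest mimics Lemma \ref{lemma:solution}: setting $f:=-\eta_{+}a_{1}\in\Omega_{2}\subset H^{-1}(SM)$ (with $\|f\|_{H^{-1}}\leq C\|a_{1}\|_{L^{2}}$), the linear functional $\ell(Q_{+}u):=\langle u,f\rangle$ is single-valued and continuous on $Q_{+}\mathcal{V}$ by the sub-elliptic bound. Hahn--Banach extends it to $L^{2}(SM)$, and Riesz produces $g\in L^{2}(SM)$ with $(Q_{+}u,g)=\langle u,f\rangle$ and $\|g\|_{L^{2}}\leq C\|a_{1}\|_{L^{2}}$. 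Since $Q_{+}u$ has Fourier support in modes $\geq 2$, replacing $g$ by $\pi_{\geq 2}g$ preserves the identity, so I may assume $g$ itself has Fourier support in modes $\geq 2$. Then $(u,XVg)=(u,P^{*}g)=(Pu,g)=(Q_{+}u,g)=\langle u,f\rangle$ for every $u\in\mathcal{V}$, and since both $XVg$ and $f$ have Fourier support in modes $\geq 1$ this forces $XVg=f$.

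Finally, smoothness of each $w_{k}$ follows by induction from $Xw=0$. The Fourier identity $\eta_{-}w_{k+1}=-\eta_{+}w_{k-1}$ together with $\eta_{-}w_{2}=-\eta_{+}w_{0}=0$ bootstraps regularity: $w_{1}=a_{1}\in C^{\infty}$; $w_{2}\in\ker(\eta_{-}|_{\Omega_{2}})$ is a holomorphic quadratic differential, hence smooth by elliptic regularity of $\eta_{-}\cong\bar{\partial}$; inductively, if $w_{k-1}\in C^{\infty}$ then $\eta_{-}w_{k+1}\in C^{\infty}$, and since $w_{k+1}\in L^{2}$ and $\eta_{-}$ is elliptic, $w_{k+1}\in C^{\infty}$.
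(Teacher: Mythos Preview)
Your argument is correct, but the route differs from the paper's. The paper does not set up a separate one-sided operator $Q_{+}$ or prove a new subelliptic estimate. Instead it simply applies Theorem~\ref{theorem:sur1} with $a_{-1}=0$ to obtain a two-sided invariant distribution $\tilde w=VTh+a_{1}$ with $h\in L^{2}$ and $\|h\|_{L^{2}}\le C\|a_{1}\|_{L^{2}}$, and then takes the holomorphic projection $w=\sum_{k\ge 1}\tilde w_{k}=a_{1}+V\bigl(\sum_{k\ge 2}h_{k}\bigr)$. The assumption $\eta_{-}a_{1}=0$ is used once, to verify $(Xw)_{0}=\eta_{-}w_{1}=0$; invariance in all other modes is inherited from $X\tilde w=0$. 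The $L^{2}_{x}H^{-1}_{\theta}$ bound and the smoothness of each $w_{k}$ then drop out exactly as in your last two paragraphs.

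What your approach buys is a self-contained one-sided subelliptic estimate $\|u\|_{H^{1}}\le C\|Q_{+}u\|$ on $\bigoplus_{k\ge 1}\Omega_{k}$, which is a slightly sharper statement than what the paper records; the paper's approach buys brevity, since once Theorem~\ref{theorem:sur1} and Lemma~\ref{lemma:solution} are in place the holomorphic projection is a two-line observation rather than a fresh Hahn--Banach argument.
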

\begin{proof}
Let $\tilde{w}$ be the distribution given by Theorem \ref{theorem:sur1} in the case where $a_{-1} = 0$, and let $w$ be its holomorphic projection, $w = \sum_{k=1}^{\infty} \tilde{w}_k$. It is easy to check that $(Xw)_k = \eta_+ w_{k-1} + \eta_- w_{k+1} = 0$ for all $k$, so $Xw = 0$. The fact that each $w_k$ is $C^{\infty}$ follows by elliptic regularity from the equations for $(Xw)_k$. Finally, since $w = V(\sum_{k=2}^{\infty} h_k) + a_1$ with $\norm{h}_{L^2} \leq C \norm{a_1}_{L^2}$ we obtain the norm estimate.
\end{proof}

\medskip

\begin{proof}[Proof of Theorem \ref{thm_main_i1star}]
Theorem \ref{thm_main_i1star} follows from Theorem \ref{theorem:sur1} if we prove the following: let $A=a_{-1}+a_{1}$ be a 1-form. Then $A$ is solenoidal if and only if 
$\eta_{+}a_{-1}+\eta_{-}a_{1}=0$. Note that the claim about smoothness of $w_{k}$ for $k$ odd
follows as in the proof of Theorem \ref{thm_main_i0star} using the ellipticity of $\eta_{\pm}$.

The 1-form $A$ is solenoidal if and only if $d\star A=0$, where $\star$ is the Hodge star operator
of the metric $g$. Let $j$ denote the complex structure of $(M,g)$.
It is easy to check that for any 1-form $\beta$ we have
\[d\beta_{x}(v,jv)=(X_{\perp}(\beta)-X(\star\beta))(x,v).\]
where $(x,v)\in SM$. Hence $d\star A=0$ if and only if
\[X_{\perp}(\star A)+X(A)=0.\]
But if $A=a_{-1}+a_{1}$, then $\star A=ia_{-1}-ia_{1}$ and thus the previous equation turns into
\[iX_{\perp}a_{-1}-iX_{\perp}a_{1}+Xa_{-1}+Xa_{1}=0\]
or equivalently
\[\eta_{+}a_{-1}+\eta_{-}a_{1}=0\]
as desired.
\end{proof}

\begin{Theorem}[Surjectivity of $I_{m}^*$ for $m\geq 2$] Let $(M,g)$ be an Anosov surface which is $\alpha$-controlled for $\alpha>(m-1)/(m+1)$
and $m\geq 2$. Let $q_{m}\in \Omega_m$ be such that $\eta_{-}q_{m}=0$. Then there exists $w\in H^{-1}(SM)$ such that
$Xw=0$ and $w_m=q_m$.
\label{theorem:sur2}
\end{Theorem}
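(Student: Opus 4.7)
The plan is to follow the template already used for the proofs of Theorem \ref{thm_main_i0star} and Theorem \ref{theorem:sur1}: exploit the surjectivity of $Q^*$ on suitable Fourier modes (Lemma \ref{lemma:solution}) to correct the symbol $q_m$ into a flow-invariant distribution. Since the hard analytic work—the subelliptic estimate $\|u\|_{H^1}\leq C\|Qu\|$ on $\bigoplus_{|k|\geq m}\Omega_k$ obtained in Corollary \ref{cor:subellipticestimate}—is already available under the hypothesis $\alpha>(m-1)/(m+1)$, the present statement should follow by essentially a one-line construction.

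Concretely, I would first observe that because $\eta_{-}q_m=0$, the decomposition $X=\eta_{+}+\eta_{-}$ gives
\[
Xq_m=\eta_{+}q_m\in\Omega_{m+1}.
\]
Hence $f:=-Xq_m$ lies in $C^\infty(SM)\subset H^{-1}(SM)$ and its Fourier coefficients $f_k$ vanish for every $k\neq m+1$. In particular $f_k=0$ for all $|k|\leq m-1$, so Lemma \ref{lemma:solution} applies and furnishes an $h\in L^2(SM)$ with
\[
Q^*h=XVTh=-Xq_m.
\]

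Now set $w:=VTh+q_m$. Since $h\in L^2(SM)$ and $V$ maps $L^2(SM)$ continuously into $H^{-1}(SM)$, we have $w\in H^{-1}(SM)$. The flow invariance is immediate:
\[
Xw=XVTh+Xq_m=Q^*h+Xq_m=0.
\]
Finally, to check $w_m=q_m$, note that $Th=\sum_{|k|\geq m+1}h_k$ is Fourier-supported in degrees $|k|\geq m+1$, and $V$ preserves the Fourier degree, so $(VTh)_m=0$ and hence $w_m=q_m$.

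There is no real obstacle here beyond the subelliptic estimate already proved: the essential structural point is just that $\eta_{-}q_m=0$ guarantees that $-Xq_m$ has its Fourier support in the range $|k|\geq m+1$ where Lemma \ref{lemma:solution} provides a preimage under $Q^*$, and then the projector $T$ automatically keeps the correction term $VTh$ out of degree $m$ so that the prescribed value $w_m=q_m$ is preserved.
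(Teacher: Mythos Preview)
Your proof is correct and follows exactly the same approach as the paper: set $f=-Xq_m\in\Omega_{m+1}$, apply Lemma \ref{lemma:solution} to obtain $h\in L^2(SM)$ with $Q^*h=-Xq_m$, and take $w=VTh+q_m$. You have simply made explicit the verifications (that $w\in H^{-1}$ and that $(VTh)_m=0$) which the paper leaves to the reader.
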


\begin{proof} Let $f:=-Xq_m$. By hypothesis, $f_k=0$ for all $k \neq m+1$. By Lemma \ref{lemma:solution} there is $h\in L^2(SM)$ such that
$XVTh=-Xq_m$. Hence $w=VTh+q_m$ is the desired distribution.
\end{proof}

\section{Injectivity of $I_{m}$} \label{sec_injectivity}

In this section we prove Theorem \ref{thm_main_transport} which is in turn a consequence of a more general
result.

\begin{Theorem} Let $(M,g)$ be a closed surface of genus $\geq 2$ which is $(m-1)/(m+1)$-controlled. Let $f$ be any symmetric $m$-tensor and assume there exists a smooth solution $a$ to the transport equation
\[Xa=f.\]
Then $a_k=0$ for $|k|\geq m$ and $f$ is potential.
\label{theorem:stran}
\end{Theorem}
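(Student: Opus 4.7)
The plan is to apply Proposition \ref{prop:alphac} at the borderline value $\alpha=(m-1)/(m+1)$ to the high-frequency tail
$$u := \sum_{|k|\geq m} a_k \in C^{\infty}(SM).$$
First I would check that $Qu=TVXu=0$. Indeed, for any $k$ with $|k|\geq m+1$ both neighbors satisfy $|k\pm 1|\geq m$, hence $u_{k\pm 1}=a_{k\pm 1}$ and
$$(Xu)_k = \eta_{+}a_{k-1}+\eta_{-}a_{k+1} = (Xa)_k = f_k = 0,$$
because $f$ has degree $m$. Thus $Xu$ is supported in Fourier modes $|k|\leq m$, so applying $V$ and then the projection $T$ onto $\bigoplus_{|k|\geq m+1}\Omega_k$ kills everything.

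The heart of the argument is that $\alpha=(m-1)/(m+1)$ is exactly tuned so that the leading Pestov coefficient in Proposition \ref{prop:alphac} vanishes, $1-m^2+\alpha(m+1)^2=0$, while the next one equals $2m/(m+1)>0$. Feeding $u$ into the proposition, the identity $\|Qu\|^2=0$ forces each nonnegative summand on the right-hand side to vanish, giving $\eta_{-}a_m=0$, $\eta_{+}a_{-m}=0$, and (assuming $m\geq 2$ so that $\alpha>0$) also $w := \sum_{|k|\geq m+1}(XVu)_k=0$. Next I would exploit the ladder relations $\eta_{+}a_{k-1}+\eta_{-}a_{k+1}=0$ that hold for $|k|\geq m+1$ since $f_k=0$ there. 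Substituting these into
$$(XVu)_k = i(k-1)\eta_{+}u_{k-1}+i(k+1)\eta_{-}u_{k+1}$$
collapses $(XVu)_k$ to $2i\eta_{-}a_{k+1}$ for $k\geq m+1$ and to $-2i\eta_{+}a_{k-1}$ for $k\leq -m-1$. Hence $w=0$ delivers $\eta_{-}a_l=0$ for all $l\geq m+2$ and $\eta_{+}a_l=0$ for all $l\leq -m-2$. Re-inserting these into the ladder bootstraps vanishing one step inward: $\eta_{+}a_l=0$ for every $l\geq m$ and $\eta_{-}a_l=0$ for every $l\leq -m$. Since $(M,g)$ has genus $\geq 2$, Lemma \ref{lemma:conformal} says $\eta_{+}$ is injective on $\Omega_l$ for $l\geq 1$ and $\eta_{-}$ is injective on $\Omega_l$ for $l\leq -1$, so $a_k=0$ whenever $|k|\geq m$.

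For the second assertion I would split $a$ into the sum of its even and odd Fourier parts, $a=a^{\mathrm{even}}+a^{\mathrm{odd}}$. Because $X$ flips Fourier parity and $f$ has Fourier support concentrated in the parity class of $m$ (a symmetric $m$-tensor lives in modes $k\equiv m\pmod 2$, $|k|\leq m$), comparing parities in $Xa=f$ forces the piece $h$ of $a$ with parity $m-1$ to satisfy $Xh=f$ while the opposite-parity piece lies in $\ker X$. By the previous step $h$ has Fourier support only in $|k|\leq m-1$ with $k\equiv m-1\pmod 2$, which is exactly the Fourier footprint of a smooth symmetric $(m-1)$-tensor on $M$; thus $f=Xh=dh$ is potential. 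The main obstacle is the calibration step: recognizing that $\alpha=(m-1)/(m+1)$ makes exactly one Pestov coefficient die while keeping the next one strictly positive, and then turning the surviving equality $w=0$ into total vanishing of the tail by iterating the ladder relations together with the ellipticity of $\eta_{\pm}$ on $\Omega_l$ for $|l|\geq 1$.
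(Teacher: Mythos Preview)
Your proof is correct and follows essentially the same route as the paper's: define the tail $u=\sum_{|k|\ge m}a_k$, observe $Qu=0$, apply Proposition~\ref{prop:alphac} at the borderline $\alpha=(m-1)/(m+1)$, and finish with the injectivity of $\eta_{\pm}$ from Lemma~\ref{lemma:conformal}. The only cosmetic difference is that the paper packages the outcome of $Qu=0$ as closed identities $Xu=\eta_{-}u_{m+1}+\eta_{+}u_{-m-1}$ and $XVu=i(m+1)(\eta_{-}u_{m+1}-\eta_{+}u_{-m-1})$, then passes to $\eta_{\pm}u$ via $X_{\perp}u=XVu-VXu$, whereas you extract the same vanishing mode by mode through the ladder relations; your explicit flag that $m\ge 2$ is needed to force $w=0$ (since then $\alpha>0$) is accurate and is used implicitly in the paper as well.
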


\begin{proof} Let $u=a-\sum_{|k|\leq m-1}a_k$. Then $Xu$ has degree $m$ and 
$Qu=TVXu=0$. Let us apply inequality (\ref{eq:quantitative}) for $\alpha=(m-1)/(m+1)$
to obtain that 
\begin{equation*}
\begin{aligned}
&XVu=i(m+1)\eta_{-}u_{m+1}-i(m+1)\eta_{+}u_{-m-1},\\
&Xu=\eta_{-}u_{m+1}+\eta_{+}u_{-m-1}.
\end{aligned}
\end{equation*}
Using that $X_{\perp}=XV-VX$ we also obtain
\[X_{\perp}u=i\eta_{-}u_{m+1}-i\eta_{+}u_{-m-1}.\]
Thus
\[\eta_{+}u=\eta_{+}u_{-m-1}\in\Omega_{-m},\]
\[\eta_{-}u=\eta_{-}u_{m+1}\in\Omega_{m}.\]
Since $u_k = 0$ for $\abs{k} < m$, we obtain $\eta_+ u_k = 0$ for $k \neq -m-1$ and $\eta_- u_k = 0$ for $k \neq m+1$. But from Lemma \ref{lemma:conformal} we know that the operator $\eta_{+}$ is injective on $\Omega_k$ for $k\geq 1$ and $\eta_{-}$ is injective
on $\Omega_k$ for $k\leq -1$. This readily implies  $u=0$ and thus $a$ must have degree $m-1$.
This also implies easily that $f$ is a potential tensor (see for example \cite{PSU_tensor}).
\end{proof}

\begin{proof}[Proof of Theorem \ref{thm_main_transport}.]
This is now a direct consequence of the previous theorem and Theorem \ref{theorem:control}.
\end{proof}

\section{$SL(2,\re)$-cocycles, Hopf solutions and terminator values of surfaces} \label{sec_betaconjugate}

Let $(M,g)$ be a closed oriented Riemannian surface. The usual Jacobi equation
$\ddot{y}+K(t)y=0$ determines the differential of the geodesic flow $\phi_t$: if we
fix $(x,v)\in SM$ and $T_{(x,v)}(SM)\ni \xi=-a X_{\perp}+bV$ then
\[d\phi_{t}(\xi)= -y(t)X_{\perp}(\phi_{t}(x,v))+\dot{y}(t)V(\phi_{t}(x,v)),\]
where $y(t)$ is the unique solution to the Jacobi equation with initial
conditions $y(0)=a$ and $\dot{y}(0)=b$ and $K(t)=K(\pi\circ\phi_{t}(x,v))$.
 The differential of the geodesic flow
determines an $SL(2,\re)$-cocyle over $\phi_t$ with infinitesimal generator:
\[A:=\left(
  \begin{array}{ c c }
     0 & 1 \\
     -K & 0
  \end{array} \right).\]
Given a real number $\beta$ we consider the following 1-parameter family of infinitesimal
generators:
\[A_{\beta}:=\left(
  \begin{array}{ c c }
     0 & 1 \\
     -\beta K & 0
  \end{array} \right).\]
They determine by integration a 1-parameter family of $SL(2,\re)$-cocycles $\Psi^{\beta}_{t}$ over the
geodesic flow (see \cite{Ka} for information on cocycles over dynamical systems). More precisely, $\Psi_t^{\beta}$ is the matrix given by 
$$
\Psi_t^{\beta}(x,v): \left( \begin{array}{c} y(0) \\ \dot{y}(0) \end{array} \right) \mapsto \left( \begin{array}{c} y(t) \\ \dot{y}(t) \end{array} \right)
$$
where $\ddot{y}(t) + \beta K(\pi \circ \phi_t(x,v)) y(t) = 0$. Since $A_{\beta}$ has trace zero, $\Psi_{t}^{\beta}\in SL(2,\re)$.
Clearly $\Psi^{1}_{t}$ can be identified with $d\phi_t$ acting on the kernel of the contact 1-form of the geodesic flow (i.e. the 2-plane spanned by $X_{\perp}$ and $V$). In this section we shall study this family of cocycles putting emphasis on two properties: absence of conjugate points and hyperbolicity.
For completeness we first give the following two definitions.

\begin{Definition} The cocycle $\Psi_{t}^{\beta}$ is free of conjugate points if any non-trivial
solution of the $\beta$-Jacobi equation $\ddot{y}+\beta K(t)y=0$ with $y(0)=0$ vanishes only at $t=0$.
\end{Definition}

\begin{Definition} The cocycle $\Psi_{t}^{\beta}$ is said to be hyperbolic if there
is a continuous invariant splitting
$\re^2=E^{u}\oplus E^{s}$, and constants $C>0$ and $0<\rho<1<\eta$ such that 
for all $t>0$ we have
\[\|\Psi^{\beta}_{-t}|_{E^{u}}\|\leq C\,\eta^{-t}\;\;\;\;\mbox{\rm
and}\;\;\;\|\Psi^{\beta}_{t}|_{E^{s}}\|\leq C\,\rho^{t}.\]
Note that $E^{s}$ and $E^{u}$ are 1-dimensional subbundles over $SM$.
\end{Definition}

Of course, saying that $\Psi^{1}_{t}$ is hyperbolic is the same as saying that $(M,g)$ is an Anosov surface. The two properties are related by the following:

\begin{Theorem} If $\Psi^{\beta}_{t}$ is hyperbolic then $E^s$ and $E^u$ are transversal
to the line generated by $(0,1)$ and $\Psi^{\beta}_{t}$ is free of conjugate points.
\label{thm:kling}
\end{Theorem}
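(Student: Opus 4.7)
I will prove the two conclusions together; transversality is the crux, and absence of conjugate points then follows from a Riccati comparison.

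Given transversality, $E^{s,u}(x,v)$ become graphs over the horizontal: $E^{u,s}(x,v)=\mathrm{span}(1,r^{\pm}(x,v))$ for continuous, bounded $r^{\pm}:SM\to\re$. Invariance of $E^{s,u}$ under $\Psi^{\beta}_{t}$ combined with the form of $A_{\beta}$ translates into the Riccati equation
\[X r^{\pm}+(r^{\pm})^{2}+\beta K=0,\]
and hyperbolicity gives $E^{s}\ne E^{u}$, so $r^{+}\ne r^{-}$ pointwise.

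From this the absence of $\beta$-conjugate points is immediate. Suppose $y\not\equiv 0$ solves $\ddot{y}+\beta K y=0$ along some orbit with $y(0)=0$; after rescaling, $\dot{y}(0)=1$. The ratio $u(t):=\dot{y}(t)/y(t)$ satisfies $\dot{u}+u^{2}+\beta K=0$ and blows up to $+\infty$ as $t\to 0^{+}$. Setting $w:=u-r^{+}$ gives $\dot{w}+w(u+r^{+})=0$, so the sign of $w$ is preserved on the interval where $u$ is defined. Since $w(0^{+})=+\infty>0$, we have $u(t)>r^{+}(\phi_{t}(x_{0},v_{0}))$ for every $t>0$ in the domain of $u$; in particular $u$ is bounded below by $\inf_{SM} r^{+}$ and so cannot blow up to $-\infty$, ruling out zeros of $y$ for $t>0$. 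The analogous backward argument with $r^{-}$ handles $t<0$.

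The main step is transversality. I argue by contradiction: assume $E^{s}(x_{0},v_{0})=\mathrm{span}(0,1)$ at some $(x_{0},v_{0})\in SM$. Then $(0,1)\in E^{s}(x_{0},v_{0})$, so by invariance $(y_{2}(t),\dot{y}_{2}(t))\in E^{s}(\phi_{t}(x_{0},v_{0}))$, where $y_{2}$ solves $\ddot{y}+\beta K y=0$ along the orbit with $y_{2}(0)=0$, $\dot{y}_{2}(0)=1$; hyperbolicity forces $(y_{2}(t),\dot{y}_{2}(t))\to 0$ exponentially as $t\to\infty$. Meanwhile $(1,0)\notin E^{s}(x_{0},v_{0})$ has nonzero $E^{u}$-component, so $\Psi^{\beta}_{t}(1,0)=(y_{1}(t),\dot{y}_{1}(t))$ grows exponentially and its direction aligns asymptotically with $E^{u}(\phi_{t}(x_{0},v_{0}))$. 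Taking any $\phi_{t}$-invariant probability measure on $SM$ (available from Krylov--Bogoliubov) and a Poincar\'e recurrence sequence $t_{n}\to\infty$ with $\phi_{t_{n}}(x_{0},v_{0})\to(x_{0},v_{0})$, continuity of the splitting $E^{s}\oplus E^{u}$ forces the normalized limits of $(y_{2},\dot{y}_{2})(t_{n})$ and $(y_{1},\dot{y}_{1})(t_{n})$ to lie in $E^{s}(x_{0},v_{0})=\mathrm{span}(0,1)$ and $E^{u}(x_{0},v_{0})$ respectively. Combining this with the Wronskian identity $y_{1}\dot{y}_{2}-\dot{y}_{1}y_{2}\equiv 1$ (i.e.\ $\det\Psi^{\beta}_{t}=1$) and the rigid structure of the Jacobi cocycle (second row equals the $t$-derivative of the first) produces a contradiction with $E^{s}(x_{0},v_{0})\ne E^{u}(x_{0},v_{0})$.

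The principal obstacle is the transversality argument itself: a generic hyperbolic $SL(2,\re)$-cocycle can have $E^{s}$ aligned with any reference line, and it is only the specific Jacobi form of $\Psi^{\beta}_{t}$ — imposing both the symplectic Wronskian constraint and the second-order scalar structure of the underlying ODE — that rules this out here.
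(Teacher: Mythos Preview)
Your reduction from transversality to the absence of $\beta$-conjugate points is correct: once $E^{s,u}$ are graphs $\mathrm{span}(1,r^{\pm})$ with $r^{\pm}$ continuous (hence bounded) solutions of the Riccati equation, the comparison argument you give is clean and valid.

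The transversality argument, however, has two genuine gaps.

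\emph{Recurrence.} Poincar\'e recurrence for an invariant probability measure gives recurrence only $\mu$-almost everywhere. The specific point $(x_{0},v_{0})$ at which you assume $E^{s}(x_{0},v_{0})=\mathrm{span}(0,1)$ has no reason to be recurrent; nothing in the hypotheses forces the base geodesic flow to be transitive or Anosov (the theorem is stated for the cocycle $\Psi^{\beta}_{t}$, not for $\phi_{t}$). So the sequence $t_{n}$ with $\phi_{t_{n}}(x_{0},v_{0})\to(x_{0},v_{0})$ need not exist.

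\emph{The claimed contradiction.} Even if one grants recurrence, the Wronskian together with the asymptotics you set up does \emph{not} produce a contradiction. Writing $\rho_{n}=\|(y_{1},\dot y_{1})(t_{n})\|$, $\sigma_{n}=\|(y_{2},\dot y_{2})(t_{n})\|$ and $E^{u}(x_{0},v_{0})=\mathrm{span}(a,b)$ with $a\neq 0$, your normalisations give $y_{1}(t_{n})/\rho_{n}\to \pm a$, $\dot y_{1}(t_{n})/\rho_{n}\to\pm b$, $y_{2}(t_{n})/\sigma_{n}\to 0$, $\dot y_{2}(t_{n})/\sigma_{n}\to\pm 1$, and the Wronskian identity then only yields $\rho_{n}\sigma_{n}\to 1/|a|$, which is perfectly consistent with hyperbolicity. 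The ``rigid structure'' you invoke (second row is the derivative of the first) is already fully encoded in the Wronskian and adds nothing further. So no contradiction with $E^{s}\neq E^{u}$ is reached.

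What is actually needed is the observation the paper singles out: the projectivised flow is \emph{transversal} to the constant vertical section. Concretely, if $\theta(t)$ denotes the angle of $\Psi^{\beta}_{t}\xi$, then $\dot\theta=-\sin^{2}\theta-\beta K\cos^{2}\theta$, so at $\theta=\pi/2$ one has $\dot\theta=-1$; every crossing of the vertical direction is in the same (decreasing) sense. It is this monotone-twist property, exploited globally (as in the Klingenberg--Ma\~n\'e argument reproduced in \cite[Chapter~2]{Pa}), that rules out $E^{s}$ or $E^{u}$ ever being vertical. A recurrence/Wronskian argument alone, without this monotonicity, cannot close the gap.
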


\begin{proof} For $\beta=1$ this is exactly the content of Klingenberg's theorem mentioned
in the introduction \cite{K}. The proof presented in \cite[Chapter 2]{Pa} of this result
extends to the cocycle $\Psi_{t}^{\beta}$ without any significant change.
The key point is that the projectivised action of $\Psi_{t}^{\beta}$ is transversal
to the section given by $(0,1)$.

\end{proof}

Let us describe now the Hopf limit solutions when $\Psi_{t}^{\beta}$ is free of conjugate points \cite{H} (see also Section 1 of \cite{BBB}).
Consider the Riccati equation
\[\dot{r}+r^2+\beta K=0.\]
This equation is obtained from the Jacobi equation $\ddot{y}+\beta Ky=0$ by the change
of variable $r=\dot{y}/y$. The times $t_1<t_2$ are adjacent zeros of a solution
of the Jacobi equation if and only if the corresponding solution $r$ of the Riccati equation
is defined on $(t_1,t_2)$ and $r(t)\to +\infty$ as $t$ decreases to $t_1$ and
$r(t)\to-\infty$ as $t$ increases to $t_2$.

Assume now that $\Psi_{t}^{\beta}$ is free of conjugate points. Then the solutions
$r^{+}_{R}(x,v,t)$ and $r^{-}_{R}(x,v,t)$ of the Riccati equation
$\dot{r}+r^2+\beta K(\pi\circ\phi_{t}(x,v))=0$ with
$r^{+}_{R}(x,v,-R)=+\infty$ and $r^{-}_{R}(x,v,R)=-\infty$ are defined for all $t>-R$ and all $t<R$ respectively.

Consider now a value of $t$ with $|t|<R$. Then $r^{+}_{R}(x,v,t)$ and $r^{-}_{R}(x,v,t)$ are both defined and are decreasing and increasing functions of $R$ respectively. Also $r^{+}_{R}(x,v,t)>r^{-}_{R}(x,v,t)$. Then the limit solutions
\[r^{\pm}(x,v,t):=\lim_{R\to\infty} r^{\pm}_{R}(x,v,t)\]
are defined for all $t$ and $r^{+}\geq r^{-}$. Observe that $r^{+}(x,v,t)$ (resp. $r^{-}(x,v,t)$) is upper (resp. lower) semicontinuous in $(x,v)$. Indeed, if $(x_n,v_n)\to (x,v)$ for each fixed $t$ we have
\[\limsup _{n\to\infty} r^{+}(x_n,v_n,t)\leq \lim_{n\to\infty} r^{+}_{R}(x_n,v_n,t)=r^{+}_{R}(x,v,t).\]
Finally, since $r^{\pm}_{R}(x,v,t+s)=r^{\pm}_{R\pm t}(\phi_{t}(x,v),s)$ it follows that
$r^{\pm}(\phi_{t}(x,v),s)=r^{\pm}(x,v,s+t)$ and hence they define measurable functions
$r^{\pm}:SM\to \re$ solving $Xr+r^2+\beta K=0$.
A simple comparison argument as in \cite{H} shows that $r^{\pm}$ are actually bounded.
We call these functions on $SM$ {\it the Hopf solutions} and often we shall use a subscript $\beta$
to indicate that they are associated with the cocycle $\Psi_{t}^{\beta}$.

\begin{Theorem} Assume that $\Psi_{t}^{\beta}$ is free of conjugate points. Then
$\Psi_{t}^{\beta}$ is hyperbolic if and only if $r_{\beta}^{+}$ and $r_{\beta}^{-}$ are distinct
everywhere.
\label{thm:eberlein}
\end{Theorem}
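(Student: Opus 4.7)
The plan is to extend Eberlein's argument for $\beta = 1$ \cite{Ebe} to the $\beta$-Jacobi cocycle, noting that the Hopf--Riccati machinery developed earlier in this section goes through verbatim with $K$ replaced by $\beta K$.

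For the forward direction, suppose $\Psi_t^\beta$ is hyperbolic. By Theorem \ref{thm:kling}, both $E^u$ and $E^s$ are transversal to the vertical line $\{(0,b) : b \in \re\}$, so each is the graph of a continuous function: $E^{u,s}(x,v) = \{(a, \rho^{u,s}(x,v) a) : a \in \re\}$ with bounded continuous slopes. Invariance of $E^{u,s}$ under $\Psi_t^\beta$ is equivalent, via $r = \dot y / y$, to $\rho^{u,s}$ satisfying the Riccati equation $Xr + r^2 + \beta K = 0$. To identify $\rho^u = r^+_\beta$ and $\rho^s = r^-_\beta$, I would match the growth/decay behaviour: vectors along $E^u$ decay backward (and grow forward) while vectors along $E^s$ decay forward, exactly as the Hopf limit construction produces $r^+_\beta$ from solutions blowing up at $t = -R$ and $r^-_\beta$ from solutions blowing up at $t = +R$. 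Pointwise distinctness $r^+_\beta \neq r^-_\beta$ on all of $SM$ is then forced by $E^u \oplus E^s = \re^2$.

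For the reverse direction, assume $\Delta := r^+_\beta - r^-_\beta > 0$ everywhere on $SM$. Set $E^u(x,v) = \{(a, r^+_\beta(x,v) a) : a \in \re\}$ and $E^s$ analogously; $\Psi_t^\beta$-invariance is automatic from the Riccati equation, and $E^u \oplus E^s = \re^2$ by distinctness. The core analytic step is to extract the exponential rates. Take Jacobi solutions $y_s, y_u > 0$ along an orbit of $\phi_t$ with $\dot y_s = r^-_\beta\, y_s$ and $\dot y_u = r^+_\beta\, y_u$. Since $\Psi_t^\beta \in SL(2,\re)$, the Wronskian $W = y_s \dot y_u - \dot y_s y_u = y_s y_u \Delta$ is constant along the orbit, while $\tfrac{d}{dt}\log(y_s/y_u) = r^-_\beta - r^+_\beta = -\Delta$. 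Combining these identities gives $y_s(t)^2 = (W / \Delta(\phi_t(x,v))) \cdot (y_s(0)/y_u(0)) \cdot \exp(-\int_0^t \Delta(\phi_\tau(x,v)) \, d\tau)$. Provided a uniform lower bound $\Delta \geq \delta > 0$ is available on $SM$, this yields $|y_s(t)| \leq C e^{-\delta t/2}$ and symmetrically $|y_u(-t)| \leq C e^{-\delta t/2}$; since the Hopf solutions $r^\pm_\beta$ are bounded, the Euclidean norms in the $(y, \dot y)$ coordinates are comparable to $|y|$, and the hyperbolicity estimates follow with $\rho = e^{-\delta/2}$ and $\eta = e^{\delta/2}$.

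The main obstacle is precisely upgrading the pointwise hypothesis $\Delta > 0$ to a uniform lower bound $\Delta \geq \delta > 0$ on the compact space $SM$. Since $r^+_\beta$ is only upper semicontinuous and $r^-_\beta$ is only lower semicontinuous, $\Delta$ is a priori merely upper semicontinuous on $SM$ and could attain arbitrarily small positive values. The route I would pursue is to show that at every point where $r^+_\beta > r^-_\beta$, both Hopf solutions are in fact continuous. This should follow by comparing the monotone approximations $r^\pm_R$, which are continuous in $(x,v)$ for each fixed $R$, together with a Riccati comparison argument that forces the semicontinuous envelopes of $r^\pm_\beta$ to collapse to the true values wherever $\Delta > 0$: once the upper envelope of $r^-_\beta$ and the lower envelope of $r^+_\beta$ were separated by any amount less than $\Delta$, the Riccati flow would widen the gap, contradicting the pointwise definition of the Hopf solutions. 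Continuity of $r^\pm_\beta$ on all of $SM$ combined with compactness then gives the uniform gap, completing the proof.
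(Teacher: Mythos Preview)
Your forward direction is essentially the paper's: it too identifies $E^s$ with the span of $(1,r_\beta^-)$ and $E^u$ with the span of $(1,r_\beta^+)$, though to make the identification rigorous one needs the auxiliary result (Proposition~\ref{prop:eb2.9} in the paper) that any $\beta$-Jacobi solution bounded forward in time must satisfy $r_\beta^-(x,v,0)\,y(0)=\dot y(0)$, and similarly backward.

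The reverse direction is where your proposal diverges from the paper and runs into a genuine gap. You correctly isolate the obstacle: upgrading the pointwise hypothesis $\Delta=r_\beta^+-r_\beta^->0$ to a uniform bound $\Delta\geq\delta>0$. But the suggested fix is not sound as written. Since $\Delta$ is only upper semicontinuous, compactness of $SM$ gives nothing, and the sketch ``the Riccati flow would widen the gap, contradicting the pointwise definition of the Hopf solutions'' does not supply a mechanism: the Hopf solutions are already global solutions of the Riccati equation, so running the flow does not produce any new information that would force continuity. Without continuity (or some other input) there is no reason $\Delta$ should be bounded away from zero, and your Wronskian estimate stalls.

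The paper sidesteps this difficulty entirely by invoking the quasi-hyperbolicity criterion of Contreras--Iturriaga \cite[Theorem~0.2]{CI}: the cocycle $\Psi_t^\beta$ is hyperbolic if and only if $\sup_{t\in\re}\norm{\Psi_t^\beta(\xi)}=+\infty$ for every nonzero $\xi$. Thus one only needs to show that no nontrivial $\beta$-Jacobi solution is bounded for all time. But Proposition~\ref{prop:eb2.9} shows that a solution bounded forward forces $r_\beta^-\,y(0)=\dot y(0)$, and symmetrically a solution bounded backward forces $r_\beta^+\,y(0)=\dot y(0)$; a globally bounded solution would then give $r_\beta^+=r_\beta^-$ along that orbit, contradicting the hypothesis. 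This argument never requires a uniform lower bound on $\Delta$, which is precisely what makes it work.
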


\begin{proof} For $\beta=1$ this was proved by Eberlein in \cite{Ebe}. To prove the theorem for arbitrary $\beta$ we shall make use of Theorem 0.2
in \cite{CI}. When applied to our situation, it says that
$\Psi_{t}^{\beta}$ is hyperbolic if and only if
\begin{equation}
\sup_{t\in\re}\,\norm{\Psi_{t}^{\beta}(\xi)}=+\infty\;\;\;\mbox{\rm for\;all}\;\xi\in \re^2,\;\xi\neq 0.
\label{eq:quasihyp}
\end{equation}

We shall also need the following proposition:

\begin{Proposition} Assume $\Psi_{t}^{\beta}$ is free of conjugate points
and let $\gamma$ be a unit speed geodesic.
Given $A>0$ there exists $T=T(A,\gamma)$ such that for any solution
$w$ of $\ddot{w}+\beta K(\gamma(t))w=0$ with $w(0)=0$ we have
\[|w(s)|\geq A |\dot{w}(0)|\]
for all $s\geq T$.
\label{prop:eb2.9}
\end{Proposition}

\begin{proof} The proof of this is exactly like the proof of Proposition 2.9 in \cite{Ebe} and hence we omit it.
\end{proof}

Suppose now we have a solution $y$ to the $\beta$-Jacobi equation
$\ddot{y}+\beta Ky=0$ that is bounded in forward time, i.e., there is
$C$ such that $|y(t)|\leq C$ for all $t\geq 0$.
We claim that $r^{-}_{\beta}(x,v,0)y(0)=\dot{y}(0)$. For $R>0$, consider the unique
solution $y_{R}$ of the $\beta$-Jacobi equation with $y_{R}(R)=0$
and $y_{R}(0)=1$. By definition $r^{-}_{R}(x,v,t)=\dot{y}_{R}(t)/y_{R}(t)$.
Let $w(t):=y(t)-y(0)y_{R}(t)$. Since $w(0)=0$ we may apply Proposition \ref{prop:eb2.9} to derive for any $A$, the existence of $T$ such that
\[|w(s)|\geq A |\dot{w}(0)|\]
for all $s\geq T$. Consider $R$ large enough so that $R\geq T$. Then
\[C\geq |y(R)|=|w(R)|\geq A|\dot{w}(0)|\geq A|\dot{y}(0)-r^{-}_{R}(x,v,0)y(0)|.\]
Now let $R\to\infty$ to obtain
\[C\geq A|\dot{y}(0)-r^{-}_{\beta}(x,v,0)y(0)|\]
and since $A$ is arbitrary the claim $r^{-}(x,v,0)y(0)=\dot{y}(0)$ follows.

Similarly, if there is a solution $y$ to the $\beta$-Jacobi equation
that is bounded backwards in time we must have $r^{+}_{\beta}(x,v,0)y(0)=\dot{y}(0)$.
Thus if there is a solution $y$ bounded for all 
times then $r^{+}_{\beta}=r^{-}_{\beta}$ along $\gamma$.

Now it is easy to complete the proof of the theorem. Suppose $\Psi_{t}^{\beta}$ is hyperbolic. Then if we consider a solution of the $\beta$-Jacobi equation corresponding to the stable bundle, it must bounded forward in time by definition
of hyperbolicity and hence by the above $(1,r_{\beta}^{-})$ spans
$E^s$. Similarly $(1,r_{\beta}^{+})$ spans
$E^u$. Since $E^s$ and $E^u$ are transversal $r_{\beta}^{+}$ and $r_{\beta}^{-}$ are distinct everywhere.

Suppose now $r_{\beta}^{+}$ and $r_{\beta}^{-}$ are distinct everywhere.
By the argument above, any non-trivial solution $y$ of the $\beta$-Jacobi equation must be unbounded. Since
\[ \norm{\Psi_{t}^{\beta}(\xi)}^2=y(t)^2+\dot{y}(t)^2,\]
where $y$ is the unique solution to the $\beta$-Jacobi equation with
$(y(0),\dot{y}(0))=\xi$, it follows that (\ref{eq:quasihyp}) holds
and hence $\Psi_{t}^{\beta}$ is hyperbolic.

\end{proof}

Below we will find convenient as in \cite[Section 1]{BBB} to use the following elementary comparison lemma:

\begin{Lemma} Let $r_{i}(t)$, $i=0,1$ be solutions of the initial value problems
\[\dot{r_{i}}+r_{i}^2+K_{i}(t)=0,\;\;r_{i}(0)=w_i,\;\;i=0,1.\]
Suppose $w_1\geq w_0$, $K_{1}(t)\leq K_{0}(t)$ for $t\in [0,t_{0}]$, and $r_{0}(t_0)$ is defined.
Then $r_{1}(t)\geq r_{0}(t)$ for $t\in [0,t_0]$.
\label{lemma:comp}
\end{Lemma}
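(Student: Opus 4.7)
The plan is to reduce the statement to a linear first order ODE for the difference $d(t) := r_1(t) - r_0(t)$ and then settle the domain-of-existence question by a blow-up argument. Let $[0,T) \subseteq [0,t_0]$ denote the largest subinterval on which both $r_0$ and $r_1$ are defined; by hypothesis $r_0$ exists on all of $[0,t_0]$, so only $r_1$ could limit this interval. Subtracting the two Riccati equations yields
\[
\dot d = -(r_1^2 - r_0^2) - (K_1 - K_0) = -(r_0 + r_1)\, d + (K_0 - K_1),
\]
which is linear in $d$ with a nonnegative forcing term, since $K_0 \geq K_1$.

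Introducing the integrating factor $\mu(t) := \exp\!\bigl(\int_0^t (r_0+r_1)(s)\,ds\bigr) > 0$, the ODE becomes $(\mu d)' = \mu(K_0 - K_1) \geq 0$ on $[0,T)$. Combined with $d(0) = w_1 - w_0 \geq 0$ and $\mu(0) = 1$, integrating from $0$ gives $\mu(t)\,d(t) \geq 0$, hence $r_1(t) \geq r_0(t)$ throughout $[0,T)$.

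It remains to show $T = t_0$, i.e. that $r_1$ cannot cease to exist before $t_0$. Here I would use the one-sided blow-up behaviour of Riccati equations: a forward-in-time singularity of $\dot r = -r^2 - K$ can only be a blow-up to $-\infty$, because once $r$ is large and positive the right hand side is very negative, so $r$ stays bounded above by a constant depending on $r(0)$ and $\sup_{[0,t_0]}|K_1|$. On the other hand, the inequality just obtained combined with the continuity of $r_0$ on the compact interval $[0,t_0]$ provides a finite lower bound on $r_1$ over $[0,T)$, which precludes blow-up to $-\infty$. Hence $T = t_0$ and $r_1 \geq r_0$ on $[0,t_0]$, as desired.

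The proof of the inequality itself is a one-line integrating-factor computation; the main (small) obstacle is this domain subtlety, since one cannot a priori apply the ODE for $d$ on an interval where $r_1$ is not yet known to exist. The bootstrap structure — establish the bound on the interval of common existence, then use it to extend that interval — is the only nontrivial ingredient, and it is where the asymmetric blow-up behaviour of Riccati solutions (to $-\infty$ only, going forward) gets used.
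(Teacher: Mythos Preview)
Your argument is correct. The integrating-factor computation for $d=r_1-r_0$ is the standard way to establish such Riccati comparison results, and your handling of the domain issue is sound: the lower bound $r_1\geq r_0$ on the maximal interval rules out blow-up to $-\infty$, while blow-up to $+\infty$ is excluded by the supersolution comparison with $\dot\rho=-\rho^2+\sup|K_1|$.

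There is nothing to compare against in the paper itself: the authors state the lemma without proof, calling it ``elementary'' and pointing to \cite[Section~1]{BBB}. Your write-up fills this gap cleanly. One minor remark: your sentence about the upper bound (``once $r$ is large and positive the right hand side is very negative'') is morally right but slightly informal, since $K_1$ could be negative; if you want to be precise, phrase it as a comparison with the explicit solution of $\dot\rho=-\rho^2+M$, $M=\sup_{[0,t_0]}|K_1|$, which is bounded above for all forward time.
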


\begin{Theorem} Let $\beta_0> 0$.
If $\Psi_{t}^{\beta_{0}}$ is free of conjugate points,
then for any $\beta\in [0,\beta_0]$, $\Psi_{t}^{\beta}$ is also free of conjugate points.
If $\Psi_{t}^{\beta_{0}}$ is hyperbolic, then for any $\beta\in (0,\beta_0]$, $\Psi_{t}^{\beta}$ is also hyperbolic.
\end{Theorem}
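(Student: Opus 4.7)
I would prove both claims by contrapositive Rayleigh quotient arguments, exploiting the fact that multiplying the $\beta$-Jacobi equation by $y$ and integrating turns $\int \dot y^2$ into $\beta \int K y^2$, so that changing $\beta \mapsto \beta_0$ in the associated Dirichlet form flips its sign when $\beta < \beta_0$.

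For the first claim, suppose $\Psi_t^\beta$ admits a conjugate point for some $\beta \in (0,\beta_0]$: there exist a unit speed geodesic $\gamma$, a time $t^* > 0$, and a non-trivial solution $y$ of $\ddot y + \beta K(\gamma(t))y = 0$ with $y(0) = y(t^*) = 0$. Multiplication by $y$ and integration by parts on $[0,t^*]$ gives $\int_0^{t^*} \dot y^2\,dt = \beta \int_0^{t^*} K(\gamma(t))y^2\,dt$, so
\[
\int_0^{t^*} \bigl(\dot y^2 - \beta_0 K(\gamma(t)) y^2\bigr)\,dt = \Bigl(1 - \frac{\beta_0}{\beta}\Bigr)\int_0^{t^*} \dot y^2\,dt,
\]
which is strictly negative when $\beta < \beta_0$. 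By Sturm oscillation theory applied to $y \in H_0^1(0,t^*)$, this forces a conjugate point for the $\beta_0$-Jacobi equation in $(0,t^*]$, contradicting the hypothesis. The cases $\beta = \beta_0$ and $\beta = 0$ are trivial.

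For the second claim, Theorem \ref{thm:kling} and the first claim give that $\Psi_t^\beta$ is free of conjugate points for every $\beta \in [0,\beta_0]$, so the Hopf solutions $r^\pm_\beta$ are defined. By Theorem \ref{thm:eberlein} hyperbolicity is equivalent to $r^+_\beta \neq r^-_\beta$ everywhere, so if it fails at some $\beta \in (0,\beta_0)$ the argument preceding Theorem \ref{thm:eberlein} yields a non-trivial solution $y$ of $\ddot y + \beta K y = 0$ along some orbit $\gamma$ that is bounded for all $t \in \re$. If $y$ is identically constant then $K \equiv 0$ along $\gamma$ and the same constant solves the $\beta_0$-Jacobi equation, contradicting hyperbolicity of $\Psi_t^{\beta_0}$. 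Otherwise fix a smooth bump $\chi$ supported in $[-1,1]$ with $\chi \equiv 1$ on $[-1/2,1/2]$, set $\chi_T(t) = \chi(t/T)$ and $\phi_T = \chi_T y$, and use $\ddot y + \beta K y = 0$ to eliminate $\beta \int K \chi_T^2 y^2\,dt$ exactly as in Part 1. A direct computation yields
\[
\int \bigl(\dot\phi_T^2 - \beta_0 K \phi_T^2\bigr)\,dt = \Bigl(1 - \frac{\beta_0}{\beta}\Bigr) \int \chi_T^2 \dot y^2\,dt + 2\Bigl(1 - \frac{\beta_0}{\beta}\Bigr)\int \chi_T \chi_T' y \dot y\,dt + \int (\chi_T')^2 y^2\,dt.
\]
With $\|\chi_T'\|_\infty = O(1/T)$ and $y,\dot y$ uniformly bounded on $\re$ (the latter because $\dot y/y = r^\pm_\beta$ is bounded on $SM$), Cauchy--Schwarz shows the cross term is $O(M(T)^{1/2}/\sqrt{T})$ with $M(T) := \int_{-T}^T \dot y^2$, and the last term is $O(1/T)$. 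Since $1 - \beta_0/\beta < 0$, the Dirichlet form for $\beta_0$-Jacobi at $\phi_T$ is strictly negative for $T$ large: either $M(T) \to \infty$ and the leading term dominates the $O(\sqrt{M(T)/T})$ cross term, or $M_\infty := \lim M(T) < \infty$ (necessarily positive since $\dot y \not\equiv 0$) and the limit is $(1 - \beta_0/\beta) M_\infty < 0$. By Sturm theory this produces a conjugate point for $\Psi_t^{\beta_0}$, contradicting Theorem \ref{thm:kling}.

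The main technical obstacle is making the cut-off Rayleigh argument in the second part simultaneously handle both $\int_{-\infty}^\infty \dot y^2 = \infty$ and the (more delicate) case $\int_{-\infty}^\infty \dot y^2 < \infty$. The scaling $\chi_T(t) = \chi(t/T)$ is essential here: a fixed-width cut-off near $\pm T$ would leave $\int (\chi_T')^2 y^2$ as a non-vanishing $O(1)$ error that could overwhelm the (finite) leading negative term in the convergent case.
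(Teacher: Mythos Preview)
Your argument is correct and genuinely different from the paper's. The paper works entirely on the Riccati side: it scales the Hopf solutions, observing that $a r^{\pm}_{\beta_0}$ is a subsolution of the $a\beta_0$-Riccati equation, and then invokes the elementary comparison Lemma~\ref{lemma:comp} to obtain both the absence of conjugate points and the explicit chain
\[
r^{+}_{a\beta_{0}}\geq ar^{+}_{\beta_{0}}\geq ar^{-}_{\beta_{0}}\geq r^{-}_{a\beta_0},
\]
from which hyperbolicity at $a\beta_0$ follows immediately via Theorem~\ref{thm:eberlein}. Your route is variational: you test the index form at the conjugate solution (Part~1) or at a cut-off of the bounded solution supplied by the Contreras--Iturriaga criterion (Part~2), and apply classical Sturm theory. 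For Part~1 your argument is arguably cleaner, since it avoids constructing the Hopf solutions altogether. For Part~2 your cutoff estimate is correct (the key point that $\dot y$ is bounded is most directly seen from $\norm{\Psi_t^{\beta}(\xi)}^2 = y^2 + \dot y^2$ being bounded, rather than through $\dot y/y = r^{\pm}_\beta$, though your justification also works once one notes that a bounded nontrivial solution cannot vanish by Proposition~\ref{prop:eb2.9}). The trade-off is that the paper's approach yields the displayed inequality chain as a byproduct, and this chain is \emph{reused} in the proof of the very next theorem (the characterization of hyperbolicity via $\beta_{Ter}$); your approach would require that result to be established separately.
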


\begin{proof} Let $r^{\pm}_{\beta_{0}}$ be the Hopf solutions associated with
$\Psi_{t}^{\beta_{0}}$. Given $a\in [0,1]$ we have
\[X(ar^{\pm}_{\beta_{0}})+(ar^{\pm}_{\beta_{0}})^2+a\beta_{0}K=(r^{\pm}_{\beta_0})^2a(a-1)\leq 0.\]
This already implies that the cocycle $\Psi_{t}^{a\beta_0}$ is free of conjugate points.
Indeed, let $q^{\pm}:=a\beta_{0}K-(r^{\pm}_{\beta_{0}})^2a(a-1)$. Then
\[X(ar^{\pm}_{\beta_{0}})+(ar^{\pm}_{\beta_{0}})^2+q^{\pm}=0\]
and $q^{\pm}\geq a\beta_{0}K$. Lemma \ref{lemma:comp} implies that the cocycle $\Psi_t^{a\beta_{0}}$ is free conjugate points. Moreover, it also implies that
\[r^{+}_{a\beta_{0}, R}(x,v,t)\geq ar^{+}_{\beta_{0}}(x,v,t)\]
for all $t>-R$. By letting $R\to\infty$ we derive
\[r^{+}_{a\beta_{0}}\geq ar^{+}_{\beta_{0}}\]
and similary
\[ar^{-}_{\beta_{0}}\geq r^{-}_{a\beta_0}.\]
Putting everything together we have
\begin{equation}
r^{+}_{a\beta_{0}}\geq ar^{+}_{\beta_{0}}\geq ar^{-}_{\beta_{0}}\geq r^{-}_{a\beta_0}.
\label{eq:chainineq}
\end{equation}

Suppose now that $\Psi_{t}^{\beta_0}$ is hyperbolic. Then by Theorem \ref{thm:kling}, $\Psi_{t}^{\beta_0}$ is free of conjugate points and by Theorem \ref{thm:eberlein}
$r^{+}_{\beta_{0}}>r^{-}_{\beta_0}$ everywhere. For $a\in (0,1]$, the chain of inequalities
(\ref{eq:chainineq}) implies that $r^{+}_{a\beta_{0}}>r^{-}_{a\beta_0}$ everywhere
and again by Theorem \ref{thm:eberlein}, $\Psi_{t}^{a\beta_0}$ is hyperbolic.

\end{proof}

This theorem motivates the following definition.

\begin{Definition} Let $(M,g)$ be a closed oriented Riemannian surface.
Let $\beta_{Ter}\in [0,\infty]$ denote the supremum of the values of $\beta\geq 0$ for which
$\Psi_{t}^{\beta}$ is free of conjugate points. We call $\beta_{Ter}$ the terminator
value of the surface.
\end{Definition}

It is easy to check from the definitions that $\Psi_{t}^{\beta_{Ter}}$ is free of conjugate
points.  Indeed if $\Psi_{t}^{\beta_{Ter}}$ has conjugate points, there is a geodesic $\gamma$ and
a non-trivial solution $y(t)$ of the $\beta_{Ter}$-Jacobi equation along $\gamma$ with $y(0)=0$ and $y(a)=0$ for some $a>0$.
Since $\dot{y}(a)\neq 0$ we see that for $\beta$ near $\beta_{Ter}$, the $\beta$-Jacobi equation has conjugate points which
contradicts the definition of $\beta_{Ter}$.

A surface has curvature $K\leq 0$ if and only if $\beta_{Ter}=\infty$.  Indeed, suppose $\beta_{Ter}=\infty$ and there is
a point $x\in M$ with $K(x)>0$. Then $K\geq \delta>0$ for points in a neighbourhood $U$ of $x$. By choosing $\beta$ 
large enough (depending on $\delta$) we can produce $\beta$-conjugate points in $U$ and $\beta_{Ter}<\infty$.
 
If a surface has no focal points, then the argument in the proof of Theorem \ref{theorem:control} shows
that $\beta_{Ter}\geq 2$.

We now have the following purely geometric characterization of hyperbolicity (the parameter $\beta$
is always $\geq 0$ in what follows).

\begin{Theorem} The cocycle $\Psi_{t}^\beta$ is hyperbolic if and only if $\beta\in (0,\beta_{Ter})$ and there is no geodesic trapped in the region of zero Gaussian curvature.
\end{Theorem}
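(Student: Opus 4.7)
The forward direction breaks into three checks. If $\beta=0$, the $0$-Jacobi equation is $\ddot y=0$, whose solutions are affine, so $\Psi_{t}^{0}$ grows at most linearly and is not hyperbolic. For $\beta>0$, Theorem \ref{thm:kling} forces absence of conjugate points, so $\beta\leq\beta_{Ter}$; strictness follows from openness of hyperbolicity of $SL(2,\re)$-cocycles under continuous perturbation of the generator (a standard cone-field argument): if $\Psi_{t}^{\beta_{Ter}}$ were hyperbolic then $\Psi_{t}^{\beta'}$ with $\beta'>\beta_{Ter}$ would still be hyperbolic, hence free of conjugate points by Theorem \ref{thm:kling}, contradicting the definition of $\beta_{Ter}$. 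Finally, a geodesic $\gamma$ trapped in $\{K=0\}$ reduces the $\beta$-Jacobi equation along $\gamma$ to $\ddot y=0$, so $\Psi_{t}^{\beta}$ restricted to this orbit grows at most polynomially, precluding an exponential splitting.

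For the converse, assume $\beta\in(0,\beta_{Ter})$ and that no geodesic is trapped in $\{K=0\}$. Since $\beta<\beta_{Ter}$, $\Psi_{t}^{\beta}$ is free of conjugate points, so $r^{\pm}_{\beta}$ exist with $r^{+}_{\beta}\geq r^{-}_{\beta}$, and by Theorem \ref{thm:eberlein} it suffices to show strict inequality everywhere. Suppose for contradiction that $r^{+}_{\beta}(x_{0},v_{0})=r^{-}_{\beta}(x_{0},v_{0})$ at some point. Pick $\beta_{0}\in(\beta,\beta_{Ter}]$, using that $\Psi_{t}^{\beta_{Ter}}$ is free of conjugate points (if $\beta_{Ter}=\infty$, any $\beta_{0}>\beta$ will do), and set $a=\beta/\beta_{0}\in(0,1)$. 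The chain (\ref{eq:chainineq}) reads $r^{+}_{\beta}\geq a\,r^{+}_{\beta_{0}}\geq a\,r^{-}_{\beta_{0}}\geq r^{-}_{\beta}$, and equality of the endpoints at $(x_{0},v_{0})$ forces each intermediate inequality to be an equality there.

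The core step is to upgrade $r^{+}_{\beta}\geq a\,r^{+}_{\beta_{0}}$ to a quantitative estimate. Along the orbit of $(x_{0},v_{0})$, combining the Riccati equation for $r^{+}_{\beta}$ with $X(a\,r^{+}_{\beta_{0}})+(a\,r^{+}_{\beta_{0}})^{2}+\beta K=a(a-1)(r^{+}_{\beta_{0}})^{2}$, the difference $d(t)=r^{+}_{\beta}-a\,r^{+}_{\beta_{0}}$ satisfies $\dot d+(r^{+}_{\beta}+a\,r^{+}_{\beta_{0}})\,d=a(1-a)\,(r^{+}_{\beta_{0}})^{2}$. Variation of parameters on $[-T,0]$, using $d(-T)\geq 0$ and monotone convergence as $T\to\infty$, yields
\[d(0)\geq\int_{-\infty}^{0}\exp\!\left(-\int_{s}^{0}(r^{+}_{\beta}+a\,r^{+}_{\beta_{0}})\,d\tau\right)a(1-a)\,(r^{+}_{\beta_{0}})^{2}(s)\,ds.\]
Boundedness of the Hopf solutions makes the exponential weight strictly positive, so $d(0)=0$ forces $r^{+}_{\beta_{0}}\equiv 0$ along the backward orbit of $(x_{0},v_{0})$, and the Riccati equation for $r^{+}_{\beta_{0}}$ then gives $K\equiv 0$ on this backward orbit. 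An entirely symmetric treatment of $r^{-}_{\beta}\leq a\,r^{-}_{\beta_{0}}$ produces $r^{-}_{\beta_{0}}\equiv 0$, hence $K\equiv 0$, on the forward orbit. Thus $K\equiv 0$ on the whole orbit of $(x_{0},v_{0})$, yielding a geodesic trapped in the zero-curvature region and contradicting the hypothesis. The main obstacle is precisely this upgrade of the qualitative comparison Lemma \ref{lemma:comp} to a quantitative estimate that survives the $R\to\infty$ limit used to define the Hopf solutions; the variation-of-parameters formula with nonnegative integrand is what makes monotone convergence applicable and preserves the strict-positivity information needed to close the argument.
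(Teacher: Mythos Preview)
Your proof is correct. The forward direction matches the paper's argument essentially verbatim. For the converse, the paper takes a shorter route than your variation-of-parameters argument: having found $(x_0,v_0)$ with $r^{+}_{\beta}=r^{-}_{\beta}$, it first observes (via Riccati ODE uniqueness) that this equality persists along the \emph{entire} orbit, so the chain (\ref{eq:chainineq}) collapses to $u:=r^{+}_{\beta}=a\,r^{+}_{\beta_{Ter}}=a\,r^{-}_{\beta_{Ter}}=r^{-}_{\beta}$ identically along $\gamma$. Then $u$ and $u/a$ satisfy the Riccati equations for $\beta$ and $\beta_{Ter}$ respectively; subtracting the second multiplied by $a$ from the first gives $u^{2}=u^{2}/a$, hence $u\equiv 0$ and $K\equiv 0$ along $\gamma$ in one algebraic step. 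Your approach avoids invoking the propagation of $r^{+}_{\beta}=r^{-}_{\beta}$ along the orbit and instead extracts $K\equiv 0$ on each half-orbit separately via the integral inequality; this is more hands-on but has the virtue of making explicit the quantitative gap hidden in the comparison lemma. Both methods are sound and yield the same conclusion; the paper's is simply more economical once one notices that equality of the two Hopf solutions at a point forces equality on the whole orbit.
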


\begin{proof} We know that if $\Psi_{t}^{\beta}$ is hyperbolic then $\beta\leq \beta_{Ter}$.
Since hyperbolicity is an open condition we must have $\beta<\beta_{Ter}$. Finally if there
is a geodesic trapped in zero curvature the cocycle cannot be hyperbolic since
the solutions of $\ddot{y}=0$ have at most linear growth in $t$.

Consider $\beta\in (0,\beta_{Ter})$ and assume that $\Psi_t^{\beta}$ is not
hyperbolic. By Theorem \ref{thm:eberlein} there is a geodesic $\gamma$ along which
$r^{+}_{\beta}=r^{-}_{\beta}$.  Let $a:=\beta/\beta_{Ter}$. Using (\ref{eq:chainineq})
for $\beta_{0}=\beta_{Ter}$ we deduce that along $\gamma$ we must have
\[u:=r^{+}_{\beta}=ar^{+}_{\beta_{Ter}}=ar^{-}_{\beta_{Ter}}= r^{-}_{\beta}.\]
Hence $u$ solves $\dot{u}+u^2+\beta K(\gamma(t))=0$ and 
$\dot{u}/a+(u/a)^2+\beta_{Ter} K(\gamma(t))=0$. It follows that $u^2=u^2/a$ and hence
$u\equiv 0$ and $K(\gamma(t))\equiv 0$ which contradicts our hypotheses.

\end{proof}

As an immediate consequence we obtain the following geometric characterization of Anosov surfaces which was
announced in the introduction.

\begin{Corollary} A closed surface $(M,g)$ is Anosov if and only if there is no geodesic
trapped in the region of zero Gaussian curvature and $\beta_{Ter}>1$.
\label{corollary:ganosov}
\end{Corollary}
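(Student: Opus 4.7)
The plan is to derive this corollary as a direct specialization of the preceding theorem (the geometric characterization of hyperbolicity of $\Psi_{t}^{\beta}$) to the case $\beta = 1$. The only point that needs to be observed is the identification of the Anosov property of $(M,g)$ with the hyperbolicity of the specific cocycle $\Psi_{t}^{1}$.

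Recall from the discussion at the start of Section~\ref{sec_betaconjugate} that the cocycle $\Psi_{t}^{1}$ is, by construction, conjugate to the restriction of $d\phi_{t}$ to the contact distribution $\ker \alpha \subset TSM$ spanned by $\{X_\perp, V\}$. Since the geodesic direction $X$ is always fixed by $d\phi_t$ and factors out of the hyperbolic splitting, the Anosov property for $(M,g)$ is equivalent to hyperbolicity of the cocycle $\Psi_{t}^{1}$.

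With this identification in hand, one simply substitutes $\beta = 1$ into the preceding theorem. That theorem states that $\Psi_{t}^{\beta}$ is hyperbolic if and only if $\beta \in (0,\beta_{Ter})$ and no geodesic is trapped in the region $\{K=0\}$. The condition $1 \in (0,\beta_{Ter})$ is exactly $\beta_{Ter} > 1$, and combining this with the no-trapped-geodesic condition yields the claimed characterization.

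Since both directions follow at once from the theorem, there is no real obstacle; the only subtlety worth flagging in the write-up is the identification between the Anosov property and the hyperbolicity of $\Psi_t^{1}$, which is a standard fact but should be mentioned explicitly so the reader sees why the theorem applies.
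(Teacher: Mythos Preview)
Your proposal is correct and matches the paper's own treatment: the paper simply states that the corollary is ``an immediate consequence'' of the preceding theorem, having already noted earlier in the section that hyperbolicity of $\Psi_t^1$ is the same as the Anosov property. Your write-up makes explicit exactly the identification the paper relies on, so there is nothing to add.
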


We are now in good shape to complete the proofs of Theorems \ref{thm_sinjectivity_main} and \ref{thm_main_imstar}  from the Introduction.

\medskip

\begin{proof}[Proof of Theorem \ref{thm_sinjectivity_main}] By Corollary \ref{corollary:ganosov} the surface is Anosov.
If $\beta_{Ter}\geq (m+1)/2$, the surface is $(m-1)/(m+1)$-controlled by Remark \ref{remark:linkbeta} and the theorem follows
from Theorem \ref{theorem:stran} and the Livsic theorem.
\end{proof}

\medskip

\begin{proof}[Proof of Theorem \ref{thm_main_imstar}]  This follows directly from Theorems \ref{theorem:control}
and \ref{theorem:sur2} and Remark \ref{remark:linkbeta}. The smoothness of the appropriate Fourier components of $w$
follows as in the proof of Theorem \ref{thm_main_i0star} using the ellipticity of $\eta_{\pm}$.
\end{proof}

\section{Examples} \label{sec_examples}
In this section we explain how we can perform alterations to the examples in \cite{G} to prove the following proposition:

\begin{Proposition} There are examples of closed orientable surfaces
with $\beta_{Ter}<2$, but arbitrarily close to $2$.
Moreover, for these examples
there are no geodesics trapped in the region of zero Gaussian curvature.
\end{Proposition}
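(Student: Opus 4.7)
The plan is to modify the Anosov surfaces with focal points from \cite{G} by tuning the geometry of their positive-curvature region so that the terminator value falls in a prescribed window just below $2$. The key tool is a Sturm-comparison argument for the $\beta$-Jacobi equation: if along a unit-speed geodesic $\gamma$ one has $K(\gamma(t)) \geq K_0 > 0$ for $t \in [0,L]$, then comparing $\ddot{y} + \beta K(\gamma(t)) y = 0$ with $\ddot{z} + \beta K_0 z = 0$ shows that any solution with $y(0) = 0$ must vanish in $(0, \pi/\sqrt{\beta K_0}]$. Hence if $\beta K_0 L^2 \geq \pi^2$ then $\Psi_t^{\beta}$ has conjugate points along $\gamma$, giving the upper bound
\[
\beta_{Ter} \leq \frac{\pi^2}{K_0 L^2}.
\]

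My approach would be to begin with the Anosov surfaces in \cite{G}, which consist of a strongly negatively curved background with an added positive-curvature bump chosen small enough to preserve the Anosov property while producing focal points. I would parametrize the bump by its peak curvature $K_0$ and the length $L$ of a geodesic arc crossing it, arranging $K_0 L^2 = \pi^2/2 + \delta$ for small $\delta > 0$. The inequality above then gives $\beta_{Ter} < 2$, and letting $\delta \to 0^+$ pushes $\beta_{Ter}$ arbitrarily close to $2$ from below.

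It remains to verify that the modified metric is still Anosov and admits no geodesic trapped in the zero-curvature set. By Corollary~\ref{corollary:ganosov} it suffices to check $\beta_{Ter} > 1$ and absence of such trapped geodesics. The condition $\beta_{Ter} > 1$ follows because hyperbolicity of the cocycle $\Psi_t^{\beta}$ is an open condition in the $C^2$-topology of the metric, and the base surface from \cite{G} already has $\beta_{Ter} > 1$; choosing the bump localized and mild enough ensures $\Psi_t^{\beta}$ remains hyperbolic for all $\beta$ in a prescribed compact subinterval of $(0,2)$. The no-trapping condition is arranged by making the background curvature strictly negative outside the bump and keeping any transition annulus too narrow to host a complete geodesic, which can be verified using the strong convexity of geodesic balls coming from the negative background.

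The main obstacle is the simultaneous lower bound $\beta_{Ter} \geq 2 - \epsilon$ under the perturbation: the Sturm-comparison argument yields a clean upper bound but says nothing about absence of conjugate points globally for $\beta$ slightly below $2$. Here one must leverage the stability of the Anosov property: starting from a base metric with $\beta_{Ter}$ as large as desired (for instance a surface of strictly negative curvature, where $\beta_{Ter} = \infty$), a sufficiently small and well-localized perturbation introducing the positive bump preserves hyperbolicity of $\Psi_t^{\beta}$ for $\beta$ in any fixed compact interval, so that the Hopf solutions $r^{\pm}_\beta$ for the perturbed surface remain distinct along every geodesic. Combining this with the upper bound then sandwiches $\beta_{Ter}$ in the desired interval $(2-\epsilon,2)$, and the construction is completed by a continuous deformation of the bump parameters.
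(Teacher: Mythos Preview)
Your upper bound via Sturm comparison is correct and matches the paper's approach: the condition $b(r_1-\varepsilon)>\pi/(2\sqrt{2})$ in the paper is exactly a Sturm bound forcing conjugate points for the $2$-Jacobi equation on the constant-curvature disk, hence $\beta_{Ter}<2$.

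The genuine gap is in your lower bound. You propose to start from a strictly negatively curved surface (where $\beta_{Ter}=\infty$) and invoke openness of hyperbolicity of $\Psi_t^\beta$ under $C^2$-small perturbations of the metric, uniformly for $\beta$ in a compact subinterval of $(0,2)$. The difficulty is that the perturbation you need is \emph{not} $C^2$-small. Curvature is second order in the metric, so any metric perturbation taking $K$ from a strictly negative value to a positive value $K_0$ on a region has $C^2$-norm bounded below independently of how small $K_0$ is or how localized the support is. Thus the structural-stability argument gives you no control: the neighbourhood on which hyperbolicity of $\Psi_t^\beta$ persists for all $\beta\in[1,2-\varepsilon]$ need not contain any metric with a genuine positive-curvature bump. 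Your attempt to trade peak curvature against arc length via $K_0L^2=\pi^2/2+\delta$ does not help, because the obstruction is the jump in $K$ from $-1$ to something nonnegative, not the size of $K_0$.

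The paper avoids this by never using a perturbative argument for the lower bound. It takes Gulliver's explicit construction (cap of curvature $\leq b^2$ and radius $r_3$, surrounded by a convex annulus, with curvature $-1$ outside) and observes that Gulliver's own proof of absence of conjugate points, which yields the quantitative criterion $b\tan br_3<\tanh R'$, transfers verbatim to the $\beta$-Jacobi equation as
\[
\sqrt{\beta}\,b\,r_3<\pi/2,\qquad b\tan(\sqrt{\beta}\,b\,r_3)<\tanh(\sqrt{\beta}\,R').
\]
These inequalities give $\beta_{Ter}\geq\beta$ directly, with no appeal to stability. One then chooses the parameters $b,r_1,\varepsilon,R$ so that the Sturm upper bound and this explicit lower bound sandwich $\beta_{Ter}$ in $[\beta,2)$ for any prescribed $\beta\in(3/2,2)$. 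The no-trapping condition is automatic from the convexity of distance to the centre of the cap in Gulliver's construction: every geodesic leaves the cap and enters the curvature $-1$ region. To repair your argument you would need to replace the soft stability step by a quantitative criterion of this type.
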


\begin{proof}The construction in \cite{G} has some parameters that can be adjusted
to suit our purposes. Following the notation in \cite{G}, consider positive constants $b$ and $r_1$ such that $br_1<\pi/2$. There exists
a unique $r_2$, $0<r_2<r_1$, so that $ b^{-1}\sin br_1=\sinh(r_1-r_2)$.
Now choose $\varepsilon>0$ small enough so that
$\varepsilon<r_1-r_2$ and $b(r_1+\varepsilon)<\pi/2$.
Define $r_3:=r_{1}+\varepsilon$.

The main construction in \cite{G} ensures that given any $R>r_3-r_2$ we can construct an orientable closed surface $(M,g)$ with the following properties:
\begin{enumerate}
\item There is a point $p$ such that if $D$ denotes the ball centered at $p$
with radius $r_3$, then any geodesic segment in $D$ has length at most $2r_3$.
Moreover, the Gaussian curvature of $D$ is $\leq b^2$ and on the ball
of radius $r_1-\varepsilon$ centered at $p$ the curvature is constant and equal to $b^2$.
\item Outside $D$ the curvature equals $-1$.
\item Let $Q$ denote the annulus centered at $p$ with inner radius $r_3$ and outer radius $R+r_2$. Then the distance from $p$ to $\gamma(s)$ (where $\gamma$
is a unit speed geodesic) is a convex function of $s$ as long as $\gamma$ remains
in $D\cup Q$. Thus after leaving $D$, $\gamma$ must cross $Q$ to its outer boundary travelling at least a distance $R':=R+r_2-r_3$.
\end{enumerate}

In other words, the Gaussian curvature along $\gamma$ is at most $b^2$ for $s$ in certain intervals of length at most $2r_3$; these intervals are separated by intervals in which the curvature is $-1$ each of length at least $R'$.

Gulliver shows in \cite[p. 196]{G} that if
\[ b\tan br_3<\tanh R'\]
then $(M,g)$ has no conjugate points. Exactly the same proof shows that
if ($\beta>1$)
\begin{align}
&\sqrt{\beta} br_3<\pi/2, \label{eq:cond1}\\
&b\tan \sqrt{\beta} br_3<\tanh\sqrt{\beta} R',\label{eq:cond2}
\end{align}
then $\Psi_{t}^{\beta}$ is free of conjugate points.

Since the curvature is constant and equal to $b^2$ on the ball of radius
$r_1-\varepsilon$, it follows easily that if $b(r_1-\varepsilon)>\pi/2\sqrt{2}$, then the $2$-Jacobi equation has conjugate points and $\beta_{Ter}<2$.
Note that this also implies that $(M,g)$ has focal points.

Now given any $\beta\in (3/2,2)$ select $b>0$ and $\delta>0$ small enough
such that
\begin{align}
&\sqrt{\beta}(\pi/2\sqrt{2}+2b\delta)<\pi/2,\label{eq:cond3}\\
&b\tan(\sqrt{\beta}(\pi/2\sqrt{2}+2b\delta))<1/2 \label{eq:cond4}.
\end{align}
Define 
\[r_{1}:=\frac{\pi}{2\sqrt{2}b}+\delta.\]
With these choices of $b$ and $r_1$, $r_2$ is defined as above
and we choose $\varepsilon<r_1-r_2$ small enough so that
$\varepsilon<\delta$.
Using (\ref{eq:cond3}) we see that
\[\frac{\sqrt{\beta}\pi}{2\sqrt{2}}<\sqrt{\beta}b(r_1-\varepsilon)<\sqrt{\beta}b(r_{1}+\varepsilon)<\frac{\pi}{2}.\]
This ensures that (\ref{eq:cond1}) holds and that $\beta_{Ter}<2$.
Finally select $R$ large enough so that 
\[\tanh\sqrt{\beta}R'>1/2.\]
This together with (\ref{eq:cond4}) ensures that (\ref{eq:cond2}) holds and
hence $\beta_{Ter}\geq \beta$.

\end{proof}

\begin{Remark}{\rm An inspection of the proof also shows the following: the set of values in $(1,\infty)$ which are realized as terminator values of closed
orientable surfaces is dense.}
\end{Remark}

\section{Proof of Theorem \ref{thm:I2}}\label{proofmain}

The first step in the proof consists in showing that for any two holomorphic (in the angular variable) distributions $u$, $v$ such that $u \in L^2_x H^{-s}_{\theta}$, $v \in L^2_x H^{-t}_{\theta}$ for some $s, t \geq 0$, it is possible to define their product as an element $w$ in $H^{-N-2}(SM)$ if $N$ is sufficiently large.

\begin{Theorem} Let $(M,g)$ be a closed oriented surface. Suppose $u, v$ are distributions in $SM$ of the form $u = \sum_{k=0}^{\infty} u_k$, $v = \sum_{k=0}^{\infty} v_k$, where $u \in L^2_x H^{-s}_{\theta}$, $v \in L^2_x H^{-t}_{\theta}$ for some $s, t \geq 0$. Define 
$$
w_k = \sum_{j=0}^k u_j v_{k-j}, \quad k=0,1,\ldots.
$$
If $N$ is an integer with $N > s+t+1/2$, the sum $\sum_{k=0}^{\infty} w_k$ converges in $H^{-N-2}(SM)$ to some $w$ with $\norm{w}_{H^{-N-2}} \leq C \norm{u}_{L^2_x H^{-s}_{\theta}} \norm{v}_{L^2_x H^{-t}_{\theta}}$. Furthermore, 
\begin{equation} \label{wk_lone_estimate}
\norm{w_k}_{L^1(SM)} \leq \langle k \rangle^{s+t} \norm{u}_{L^2_x H^{-s}_{\theta}} \norm{v}_{L^2_x H^{-t}_{\theta}}.
\end{equation}
If $Xu = Xv = 0$, then also $Xw = 0$.
\label{theorem:product}
\end{Theorem}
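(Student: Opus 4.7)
The proof splits naturally into three stages: the pointwise $L^1$ bound on each mode $w_k$, the summation of these bounds in $H^{-N-2}(SM)$ via duality, and the verification that $X$ annihilates the limiting distribution.

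For the first stage, I would fix $k \geq 0$ and bound $\|w_k\|_{L^1} \leq \sum_{j=0}^k \|u_j v_{k-j}\|_{L^1}$. Each product of two $L^2$ Fourier modes is controlled by Cauchy--Schwarz: $\|u_j v_{k-j}\|_{L^1(SM)} \leq \|u_j\|_{L^2}\|v_{k-j}\|_{L^2}$. I would then introduce the weights $\langle j\rangle^{-s}$ and $\langle k-j\rangle^{-t}$ which appear in the mixed norms; because $0 \leq j \leq k$ and $s,t \geq 0$, the inequalities $\langle j\rangle \leq \langle k\rangle$ and $\langle k-j\rangle \leq \langle k\rangle$ let me pull out the factor $\langle k\rangle^{s+t}$. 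A second Cauchy--Schwarz in $j$ converts the two remaining sums into the two mixed norms, giving (\ref{wk_lone_estimate}).

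For the second stage, I would proceed by duality: for $\phi \in C^\infty(SM)$ with $\|\phi\|_{H^{N+2}} \leq 1$, the bilinear pairing picks out only one Fourier mode of $\phi$, namely $\langle w_k, \phi\rangle = \int_{SM} w_k \phi_{-k}$, so that $|\langle w_k, \phi\rangle| \leq \|w_k\|_{L^1} \|\phi_{-k}\|_{L^\infty}$. The key technical lemma is the estimate
\[ \|\phi_{-k}\|_{L^\infty(SM)} \leq C \langle k \rangle^{-N}\|\phi\|_{H^{N+2}(SM)}. \]
I would prove it by writing $\phi_{-k}(x,\theta) = h_{-k}(x) e^{-ik\theta}$ in local isothermal coordinates (so that $\|\phi_{-k}\|_{L^\infty(SM)} = \|h_{-k}\|_{L^\infty(M)}$), integrating by parts $N$ times in $\theta$ to produce the factor $|k|^{-N}$, and then using Sobolev embedding $H^2(M) \hookrightarrow L^\infty(M)$ on the $2$-dimensional base to control the remaining $x$-derivatives. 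Inserting the $L^1$ estimate from stage one yields $|\langle w_k,\phi\rangle| \leq C \langle k\rangle^{s+t-N}\|u\|_{L^2_x H^{-s}_\theta}\|v\|_{L^2_x H^{-t}_\theta}$, and summing a geometric-type tail gives convergence in $H^{-N-2}(SM)$ once $N$ exceeds $s+t+1/2$ (a slightly sharper balancing between the base and fiber Sobolev scales should give the exponent exactly as stated).

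For the third stage, I would use that $Xw$ has Fourier expansion $(Xw)_m = \eta_+ w_{m-1} + \eta_- w_{m+1}$, which is meaningful because each $w_{m\pm 1}$ lies in $L^1$. The hypothesis $Xu = 0$ reads, at the Fourier level, $\eta_+ u_{j-1} + \eta_- u_{j+1} = 0$ for every $j$; since $u$ is holomorphic this gives $\eta_- u_0 = \eta_- u_1 = 0$ and $\eta_+ u_j = -\eta_- u_{j+2}$ for $j \geq 0$, and similarly for $v$. Expanding $\eta_\pm(u_j v_{k-j})$ by Leibniz and substituting these relations, an index shift (sending $j \mapsto j+2$ in the sum produced by $\eta_+ w_{m-1}$) causes all interior terms to cancel against those of $\eta_- w_{m+1}$; the would-be boundary contributions vanish because $\eta_- u_0 = \eta_- u_1 = 0$ (and analogously on the $v$ side). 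The subtle point — which is the main obstacle — is justifying Leibniz for $\eta_\pm$ on the $L^2 \cdot L^2 = L^1$ product $u_j v_{k-j}$, since the individual summands $(\eta_\pm u_j) v_{k-j}$ and $u_j (\eta_\pm v_{k-j})$ pair an $H^{-1}$ distribution with an $L^2$ function and are not separately well-defined. I would bypass this by a mollification argument, replacing $u_j, v_{k-j}$ with smooth $L^2$-approximants so that Leibniz holds classically, carrying out the cancellation in the approximants, and taking the limit in the Fourier sum where only the well-defined combinations $\eta_\pm(u_j v_{k-j})$ survive.
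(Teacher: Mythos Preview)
Your proof is correct and follows essentially the same route as the paper. Two remarks on the deviations:

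\textbf{Stage 2.} Your pointwise bound $\|\phi_{-k}\|_{L^\infty}\leq C\langle k\rangle^{-N}\|\phi\|_{H^{N+2}}$, when inserted and summed termwise, requires $N>s+t+1$ rather than $N>s+t+\tfrac12$. The paper recovers the half-power by doing Cauchy--Schwarz in the summation variable: one writes
\[
\sum_{k}\langle k\rangle^{s+t}\|\varphi_k\|_{L^\infty}=\sum_{k}\langle k\rangle^{-\delta}\cdot\langle k\rangle^{s+t+\delta}\|\varphi_k\|_{L^\infty}\leq C_\delta\Bigl(\sum_{k}\langle k\rangle^{2(s+t+\delta)}\|\varphi_k\|_{H^2(SM)}^2\Bigr)^{1/2}
\]
for any $\delta>1/2$, and the last $\ell^2$-type sum is bounded by $C\|\varphi\|_{H^{N+2}}$ with $N=s+t+\delta$. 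This is precisely the ``sharper balancing'' you anticipated; your argument is otherwise the same.

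\textbf{Stage 3.} Your worry about the Leibniz rule is unnecessary, and the mollification detour can be dropped. Under the hypothesis $Xu=0$ one has $\eta_-u_0=0$ and $\eta_-u_1=0$, hence by ellipticity of $\eta_-$ the modes $u_0,u_1$ are smooth; then $\eta_-u_{k+2}=-\eta_+u_k$ and ellipticity of $\eta_-$ bootstrap inductively to give $u_k\in C^\infty(SM)$ for every $k$ (and likewise for $v$). So the products $(\eta_\pm u_j)v_{k-j}$ are genuinely smooth functions and the Leibniz expansion and the telescoping cancellation you describe go through classically. The paper simply applies Leibniz without comment, implicitly relying on this regularity.
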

\begin{proof}
One has $u_k, v_k \in L^2(SM)$, so each $w_k$ is in $L^1(SM)$. Note that 
$$
\sum_{j=0}^k \norm{u_j}_{L^2}^2 = \sum_{j=0}^k \langle j \rangle^{2s} \langle j \rangle^{-2s} \norm{u_j}_{L^2}^2 \leq \langle k \rangle^{2s} \norm{u}_{L^2_x H^{-s}_{\theta}}^2.
$$
Similarly $\sum_{j=0}^k \norm{v_j}_{L^2}^2 \leq \langle k \rangle^{2t} \norm{v}_{L^2_x H^{-t}_{\theta}}^2$. Consider the inner product space $(L^2(SM))^k$ with inner product 
$$
((a_0,\ldots,a_k), (b_0,\ldots,b_k)) = (a_0,b_0)_{L^2(SM)} + \ldots + (a_k,b_k)_{L^2(SM)}.
$$
The Cauchy-Schwarz inequality reads 
$$
\left| \int_{SM} (a_0 b_0 + \ldots + a_k b_k) \right| \leq \left( \sum_{j=0}^k \norm{a_j}_{L^2}^2 \right)^{1/2} \left( \sum_{j=0}^k \norm{b_j}_{L^2}^2 \right)^{1/2}.
$$
It follows that 
$$
\int_{SM} \abs{w_k} \leq \int_{SM} \sum_{j=0}^k \abs{u_j} \abs{v_{k-j}} \leq \left( \sum_{j=0}^k \norm{u_j}_{L^2}^2 \right)^{1/2} \left( \sum_{j=0}^k \norm{v_j}_{L^2}^2 \right)^{1/2}.
$$
This implies \eqref{wk_lone_estimate}.

Let $w^{(l)} = \sum_{j=0}^l w_j$, let $N$ be an integer with $N > s+t+1/2$, and let $\varphi$ be a function in $H^{N+2}(SM)$. Using \eqref{wk_lone_estimate}, we have 
\begin{align*}
\abs{\langle w^{(l)}, \varphi \rangle} &= \left| \sum_{j=0}^l \langle w_j, \varphi_j \rangle \right| \leq \sum_{j=0}^l \norm{w_j}_{L^1(SM)} \norm{\varphi_j}_{L^{\infty}(SM)} \\
 &\leq \norm{u}_{L^2_x H^{-s}_{\theta}} \norm{v}_{L^2_x H^{-t}_{\theta}} \sum_{j=0}^l \langle j \rangle^{s+t} \norm{\varphi_j}_{L^{\infty}(SM)}.
\end{align*}
By the Sobolev embedding $H^2(SM) \subset L^{\infty}(SM)$ and by Cauchy-Schwarz, we have 
$$
\sum_{j=0}^l \langle j \rangle^{s+t} \norm{\varphi_j}_{L^{\infty}(SM)} \leq C_{\delta} \left( \sum_{j=0}^l j^{2(s+t+\delta)} \norm{\varphi_j}_{H^2(SM)}^2 \right)^{1/2}
$$
for any $\delta > 1/2$. Choose $\delta=N-s-t$. Using an equivalent norm on $H^2(SM)$ involving $Y_1 = \eta_+$, $Y_2 = \eta_-$, and $Y_3 = V$, it follows that 
\begin{multline*}
\sum_{j=0}^l j^{2(s+t+\delta)} \norm{\varphi_j}_{H^2}^2 \leq \sum_{j=0}^l \left[ \norm{V^N \varphi_j}_{L^2}^2 + \sum_{q=1}^3 \norm{V^N Y_q \varphi_j}_{L^2}^2 + \sum_{q,r=1}^3 \norm{V^N Y_q Y_r \varphi_j}_{L^2}^2 \right] \\
 \leq \sum_{j=-2}^{l+2} \left[ \norm{(V^N \varphi)_j}_{L^2}^2 + \sum_{q=1}^3 \norm{(V^N Y_q \varphi)_j}_{L^2}^2 + \sum_{q,r=1}^3 \norm{(V^N Y_q Y_r \varphi)_j}_{L^2}^2 \right] \leq C \norm{\varphi}_{H^{N+2}}^2.
\end{multline*}
Thus $\norm{w^{(l)}}_{H^{-N-2}} \leq C \norm{u}_{L^2_x H^{-s}_{\theta}} \norm{v}_{L^2_x H^{-t}_{\theta}}$.

An argument using Cauchy sequences together with the previous computations shows that we may define 
$$
\langle w, \varphi \rangle = \lim_{l \to \infty} \langle w^{(l)}, \varphi \rangle, \quad \varphi \in H^{N+2}(SM).
$$
Then $w$ is an element of $H^{-N-2}(SM)$ with $\norm{w}_{H^{-N-2}} \leq C \norm{u}_{L^2_x H^{-s}_{\theta}} \norm{v}_{L^2_x H^{-t}_{\theta}}$.

The conditions $Xu = Xv = 0$ mean that $\eta_+ u_{k-1} + \eta_- u_{k+1} = 0$ for all $k$, and similarly for the $v_j$. Recall also that $u_k = v_k = 0$ for $k \leq -1$. We have $(Xw)_k = \eta_+ w_{k-1} + \eta_- w_{k+1}$, so $(Xw)_k = 0$ for $k \leq -2$. Also 
\begin{gather*}
(Xw)_{-1} = \eta_- w_0 = (\eta_- u_0) v_0 + u_0 (\eta_- v_0) = 0, \\
(Xw)_0 = (\eta_- u_0) v_1 + u_0 (\eta_- v_1) + (\eta_- u_1) v_0 + u_1 (\eta_- v_0) = 0.
\end{gather*}
Now if $l \geq 0$, 
$$
(Xw)_{l+1} = \eta_+ w_l + \eta_- w_{l+2} = \sum_{j=0}^l \eta_+(u_j v_{l-j}) + \sum_{j=0}^{l+2} \eta_-(u_j v_{l+2-j}) = 0.
$$
Thus $Xw = 0$.
\end{proof}

A combination of Theorems \ref{theorem:sur1'} and \ref{theorem:product} yields the following:

\begin{Theorem} Let $(M,g)$ be an Anosov surface. Suppose $q \in \Omega_2$ is in the linear span of $\{ a b \,;\, a, b \in \Omega_1 \text{ and } \eta_- a = \eta_- b = 0\}$. There exists $w = \sum_{k=2}^{\infty} w_k \in H^{-5}(SM)$ such that $Xw=0$, $w_2 = q$, $\norm{w}_{H^{-5}} \leq C \norm{q}_{L^2}$, and each $w_k$ is in $C^{\infty}(SM)$.
\label{theorem:sur3}
\end{Theorem}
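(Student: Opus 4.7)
The plan is to construct $w$ as the product of two holomorphic invariant distributions and then extend by linearity to the full span. The two key ingredients are already in place: Theorem \ref{theorem:sur1'} produces, for each $a \in \Omega_1$ with $\eta_- a = 0$, a holomorphic invariant distribution $\tilde{a} = \sum_{k \geq 1} \tilde{a}_k$ in $L^2_x H^{-1}_{\theta}(SM)$ satisfying $X\tilde{a} = 0$, $\tilde{a}_1 = a$, each $\tilde{a}_k \in C^\infty(SM)$, and $\norm{\tilde{a}}_{L^2_x H^{-1}_{\theta}} \leq C \norm{a}_{L^2}$; Theorem \ref{theorem:product} multiplies two such holomorphic distributions while preserving invariance.

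First I would treat a single product $q = ab$ with $a, b \in \Omega_1$ holomorphic. Form $\tilde{a}$ and $\tilde{b}$ by Theorem \ref{theorem:sur1'} and apply Theorem \ref{theorem:product} with $s = t = 1$. The requirement $N > s + t + 1/2 = 5/2$ allows $N = 3$, so the product $w := \tilde{a}\tilde{b}$ is a well-defined element of $H^{-N-2}(SM) = H^{-5}(SM)$, satisfies $Xw = 0$, and obeys
\[
\norm{w}_{H^{-5}} \leq C \norm{\tilde{a}}_{L^2_x H^{-1}_{\theta}} \norm{\tilde{b}}_{L^2_x H^{-1}_{\theta}} \leq C \norm{a}_{L^2} \norm{b}_{L^2}.
\]
Since $\tilde{a}_k = \tilde{b}_k = 0$ for $k \leq 0$, the convolution formula $w_k = \sum_{j=0}^k \tilde{a}_j \tilde{b}_{k-j}$ forces $w_0 = w_1 = 0$ and $w_2 = \tilde{a}_1 \tilde{b}_1 = ab = q$, so $w = \sum_{k \geq 2} w_k$ as required. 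For each $k \geq 2$ the coefficient $w_k$ is a finite sum of products of smooth functions, hence lies in $C^\infty(SM)$.

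For general $q$ in the linear span, fix once and for all a finite set of generators $\{a_i b_i\}$ of the span, write $q = \sum_i c_i a_i b_i$, and set $w = \sum_i c_i \widetilde{a_i} \widetilde{b_i}$ using the choices of $\widetilde{a_i}, \widetilde{b_i}$ just produced. The properties $w \in H^{-5}$, $Xw = 0$, $w_2 = q$, support in Fourier modes $k \geq 2$, and smoothness of each $w_k$ are all preserved by this finite linear combination. The norm bound $\norm{w}_{H^{-5}} \leq C \norm{q}_{L^2}$ follows from the equivalence of any two norms on the finite-dimensional ambient span: by Lemma \ref{lemma:conformal} the space of $a \in \Omega_1$ with $\eta_- a = 0$ has dimension $\mathtt{g}$, so the span of products $ab$ is finite-dimensional, and the linear map $q \mapsto w$ defined by the fixed basis expansion is automatically bounded.

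I do not anticipate a real obstacle here; the result is essentially a direct combination of Theorems \ref{theorem:sur1'} and \ref{theorem:product}. The only two points that deserve a moment of care are the index arithmetic giving $w_0 = w_1 = 0$ and $w_2 = q$ (which uses the holomorphy of $\tilde{a}$ and $\tilde{b}$ in an essential way) and the use of finite-dimensionality to pass from the bilinear estimate $\norm{a}_{L^2}\norm{b}_{L^2}$ to a linear estimate in $\norm{q}_{L^2}$.
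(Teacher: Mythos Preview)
Your proposal is correct and follows essentially the same route as the paper: apply Theorem~\ref{theorem:sur1'} to each factor, multiply via Theorem~\ref{theorem:product} with $s=t=1$ and $N=3$ to land in $H^{-5}$, and then extend linearly over a fixed basis of the finite-dimensional span, invoking equivalence of norms for the estimate $\norm{w}_{H^{-5}}\leq C\norm{q}_{L^2}$. The only cosmetic difference is that the paper argues finite-dimensionality by embedding the span into $\ker(\eta_-|_{\Omega_2})$ rather than via the dimension of $\ker(\eta_-|_{\Omega_1})$, and you should be careful to actually choose a \emph{basis} (not just generators) so that the coefficients $c_i$ are uniquely determined and $q\mapsto w$ is a well-defined linear map.
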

\begin{proof}
Denote by $E$ the linear span of  $\{ a b \,;\, a, b \in \Omega_1 \text{ and } \eta_- a = \eta_- b = 0\}$. Then $E$ a subspace of the finite dimensional space $\{ q \in \Omega_2 \,;\, \eta_- q = 0 \}$, and $E$ has a basis $\{ a^{(1)} b^{(1)}, \ldots, a^{(N)} b^{(N)} \}$ where $\eta_- a^{(j)} = \eta_- b^{(j)} = 0$. 
By Theorem \ref{theorem:sur1'} there exist holomorphic distributions $u^{(j)}, v^{(j)} \in L^2_x H^{-1}_{\theta}$ such that $Xu^{(j)} = Xv^{(j)} = 0$, $u^{(j)}_1 = a^{(j)}$, $v^{(j)}_1 = b^{(j)}$, and 
$$
\norm{u^{(j)}}_{L^2_x H^{-1}_{\theta}} \leq C \norm{a^{(j)}}_{L^2}, \quad \norm{v^{(j)}}_{L^2_x H^{-1}_{\theta}} \leq C \norm{b^{(j)}}_{L^2}.
$$
Theorem \ref{theorem:product} implies that there are $w^{(j)} \in H^{-5}(SM)$ with $Xw^{(j)} = 0$, $w^{(j)} = \sum_{k=2}^{\infty} w^{(j)}_k$, $w^{(j)}_2 = a^{(j)} b^{(j)}$, and 
$$
\norm{w^{(j)}}_{H^{-5}} \leq C \norm{a^{(j)}}_{L^2} \norm{b^{(j)}}_{L^2}.
$$
The Fourier coefficients of $w^{(j)}$ are in $C^{\infty}(SM)$ since this is true for the Fourier coefficients of $u^{(j)}$ and $v^{(j)}$ (or alternatively by using the ellipticity of $\eta_-$).

Let now $q \in E$, be so that $q = \sum_{j=1}^N \lambda_j a^{(j)} b^{(j)}$ for some uniquely determined coefficients $\lambda_j \in \re$. Define $w = \sum_{j=1}^N \lambda_j w^{(j)}$. Then $w$ has all the required properties: the norm estimate holds since 
$$
\norm{w}_{H^{-5}} \leq C \sum_{j=1}^N \abs{\lambda_j}, \quad C = \sup_{j \in \{1,\ldots,n\}} \norm{w^{(j)}}_{H^{-5}},
$$
where the norm $\sum_{j=1}^N \abs{\lambda_j}$ is equivalent to $\norm{q}_{L^2}$ on the finite dimensional space $E$.
\end{proof}

Recall that $(M,g)$ has an underlying complex structure determined by $g$. We also recall that a Riemann surface $M$ is said be hyperelliptic if there is a holomorphic map $f:M\to S^2$ of degree two.
We are now ready to prove:

\begin{Theorem} Assume $(M,g)$ is an Anosov non hyperelliptic surface.
Let $f\in C^{\infty}(SM)$ be of the form $f=f_{-2}+f_0+f_{2}$.
Assume that there is $u\in C^{\infty}(SM)$ such that $Xu=f$.
Then $u_k=0$ for all $k$ with $|k|\geq 2$ and hence $f$ is potential.
\label{thm:withnh}
\end{Theorem}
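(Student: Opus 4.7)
The plan is to combine the surjectivity result from Theorem \ref{theorem:sur3} with Max Noether's theorem (available in the non-hyperelliptic setting) to reduce the problem to the $s$-injectivity of $I_0$ on Anosov surfaces. Concretely, for any pair of holomorphic $1$-forms $a, b \in \Omega_1$ (so $\eta_- a = \eta_- b = 0$), Theorem \ref{theorem:sur3} provides an invariant distribution $w = \sum_{k\geq 2} w_k \in H^{-5}(SM)$ with smooth Fourier components and $w_2 = ab$. The identity $\langle w, Xu\rangle = -\langle Xw,u\rangle = 0$, combined with the fact that a non-trivial distributional $V$-mode pairing requires the two indices to sum to zero, collapses to $\int_{SM} ab\,f_{-2}\,d(SM) = 0$, since $w$ is supported in modes $k\geq 2$ while $f$ lives in degrees $\{-2,0,2\}$. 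The non-hyperelliptic hypothesis (which forces genus $\geq 3$) activates Max Noether's theorem, according to which products of holomorphic $1$-forms span all of $\ker(\eta_-|_{\Omega_2})$; thus $\int q\, f_{-2} = 0$ for every holomorphic quadratic differential $q$. The conjugate argument — running the same computation on $\bar w$, whose leading component is the antiholomorphic product $\bar a\bar b$ — yields $\int\bar q\, f_2 = 0$ for every antiholomorphic quadratic differential $\bar q$.

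I will then invoke the $L^2$-orthogonal decomposition $\Omega_2 = \ker(\eta_-|_{\Omega_2}) \oplus \eta_+(\Omega_1)$ (and its mirror in $\Omega_{-2}$), which follows from $\eta_-^* = -\eta_+$ together with the ellipticity of $\eta_\pm$. The integrals from the first step force $f_2 \in \eta_+(\Omega_1)$ and $f_{-2}\in\eta_-(\Omega_{-1})$; the injectivity of $\eta_+|_{\Omega_1}$ and $\eta_-|_{\Omega_{-1}}$ from Lemma \ref{lemma:conformal}, combined with elliptic regularity, produce unique smooth $h_+ \in \Omega_1$ and $h_- \in \Omega_{-1}$ satisfying $f_2 = \eta_+ h_+$ and $f_{-2} = \eta_- h_-$. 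Setting $h := h_+ + h_-$, a direct computation gives $(Xh)_{\pm 2} = f_{\pm 2}$ and $(Xh)_0 = \eta_+ h_- + \eta_- h_+$, so that $u' := u - h \in C^\infty(SM)$ solves the transport equation $Xu' = \psi$ with $\psi := f_0 - \eta_+ h_- - \eta_- h_+ \in C^\infty(M)$.

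Finally, integrating $Xu' = \psi$ around any closed geodesic shows $I_0\psi = 0$, whence $\psi \equiv 0$ by $s$-injectivity of $I_0$ on Anosov surfaces \cite{DS}. Then $Xu' = 0$, and the Livsic theorem (applicable to the ergodic Anosov flow) forces $u'$ to be a constant $c$, so $u = h_- + c + h_+$ has Fourier support confined to $\{-1,0,1\}$; this gives $u_k = 0$ for all $|k|\geq 2$, and $f = X(h_- + h_+)$ is manifestly a potential tensor. The principal obstacle is the very first step: one must pass from the specific products $ab$ furnished by Theorem \ref{theorem:sur3} to \emph{every} holomorphic quadratic differential, and this is exactly the surjectivity statement provided by Max Noether's theorem for non-hyperelliptic surfaces of genus $\geq 3$; without the non-hyperelliptic hypothesis the identities on $f_{\pm 2}$ would hold only on a proper subspace, so $h_\pm$ could not be extracted and the subsequent reduction to $I_0$ would collapse.
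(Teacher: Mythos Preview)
Your proof is correct and follows essentially the same strategy as the paper's: invoke Max Noether's theorem to span $\ker(\eta_-|_{\Omega_2})$ by products of holomorphic $1$-forms, use Theorem \ref{theorem:sur3} to produce invariant distributions with prescribed $w_2$, pair with the transport equation to eliminate the holomorphic-quadratic-differential component of $f_{\pm 2}$, and reduce to injectivity of $I_0$. The only organizational differences are that the paper first reduces to real-valued $f$ (so that $f_{-2}=\bar f_2$ and only one side needs treatment) and first decomposes $f_2=\eta_+ v_1+q_2$ before pairing with a single invariant distribution having $w_2=q_2$ to obtain $\|q_2\|^2=0$ directly, whereas you derive the orthogonality relations first, treat $f_2$ and $f_{-2}$ separately via conjugation, and then extract $h_\pm$; both routes are equivalent.
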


\begin{proof} Without loss of generality we may assume that both $f$ and
$u$ are real-valued, otherwise split the transport equation into real and
imaginary parts. Then $\bar{f}_{k}=f_{-k}$ and $\bar{u}_{k}=u_{-k}$ for all
$k$.

Now observe that we have the following orthogonal decomposition:
\[\Omega_2=\eta_{+}(\Omega_{1})\oplus \mbox{\rm Ker}\,\eta_{-}.\]
Then $f_2=\eta_{+}(v_{1})+q_2$ where $q_2\in \mbox{\rm Ker}\,\eta_{-}$ and
$v_1\in \Omega_{1}$. Thus
\[Xv_1=\eta_{+}(v_{1})+\eta_{-}(v_{1})=f_2-q_{2}+\eta_{-}(v_{1}).\]
But $F:=-\eta_{-}(v_{1})+f_0\in \Omega_0$, therefore
\begin{equation}
X(u-v_1)=f_{-2}+F+q_2.
\label{eq:q2}
\end{equation}

Since $M$ is non hyperelliptic, we may use Max Noether's theorem \cite[p.159]{FK} which asserts that for any $m\geq 2$ the $m$-fold products of the abelian differentials of the first kind span the space of holomorphic $m$-differentials. This result for $m=2$ together with Lemma \ref{lemma:conformal} imply that $q_2$
is in the linear span of the set of products $a_1b_1$ where $a_1, b_1 \in \Omega_1$ and $\eta_- a_1 = \eta_- b_1 = 0$. 
By Theorem \ref{theorem:sur3} there is an invariant distribution $w=\sum_{k=2}^{\infty}w_k$
with $w_2=q_2$. Since $u-v_1\in C^{\infty}(SM)$, applying $w$ to equality (\ref{eq:q2}) we obtain
\[0=\langle w, X(u-v_1)\rangle=\langle w_2,q_2\rangle=\norm{q_{2}}^{2}_{L^{2}}.\]
Thus $q_2=0$. Since $f_{-2}=\bar{f}_{2}=\eta_{-}(\bar{v}_{1})$ we see using (\ref{eq:q2}) that
\[X(u-(v_{1}+\bar{v}_{1}))=f_{-2}+F-X(\bar{v}_{1})=F-\eta_{+}(\bar{v}_{1})\in\Omega_0.\]
Since $I_0$ is injective we derive that
$u-(v_{1}+\bar{v}_{1})$ must be constant and thus $u_k=0$ for all $k$ with $|k|\geq 2$.

\end{proof}

We now remove the assumption of being non hyperelliptic and we complete the proof of
Theorem \ref{thm:I2}.

\medskip

\begin{proof} [Proof of Theorem \ref{thm:I2}] It is well known that a closed Riemann surface $M$ of genus
$\tt{g}\geq 2$ admits normal covers of arbitrary degree. 
In other words given a positive integer $n$, there is a normal cover $N\mapsto M$
of degree $n$ and $N$ has genus $n(\tt{g}-1)+1$.
If $M$ is hyperelliptic, $N$ will be hyperelliptic only when $n=2,4$ \cite{Mac}, so
by taking $n\geq 5$ we can ensure that $N$ will not be hyperelliptic.

The metric $g$ can be lifted to $N$ and the geodesic flow continues to be
Anosov. The transport equation also lifts to $X\tilde{u}=\tilde{f}$, where
$\tilde{u}$ and $\tilde{f}$ are the lifts of $u$ and $f$.
We can now apply Theorem \ref{thm:withnh} in $N$ to deduce that
$\tilde{u}$ has degree one. Hence $u$ has degree one and $f$ is potential.

\end{proof}

\begin{Remark}{\rm To obtain solenoidal injectivity on tensors of order $m \geq 3$, it would be natural  to consider products of $m$ invariant distributions in $H^{-1}(SM)$ obtained from the surjectivity of $I_1^*$. However, even though the Fourier coefficients of such distributions are in $C^{\infty}(SM)$, we are currently unable to obtain the required estimates to show that the product makes sense as a distribution.}
\end{Remark}

\end{document}